\newtheorem{theorem}{Theorem}[section]
\newtheorem{definition}[theorem]{Definition}
\newtheorem{lemma}[theorem]{Lemma}
\newtheorem{corollary}[theorem]{Corollary}
\newcommand\CurrentProof[1]{#1}
\newcommand\AlternativeProof[1]{}
\newcommand{\snapshot}[2]{{#2}:\mathfrak W
\mathrel{\vcenter{\offinterlineskip
\hbox{\hskip.6ex${\mathsmaller{#1}}$}\vskip 0.2ex \hbox{$\hookrightarrow$} \vskip -0.2ex \hbox{\hskip.3ex${\mathsmaller{\prec}}$}}}
T}
\def\lleq{\mathrel{\vcenter{\offinterlineskip\hbox{$\ll$} \vskip -1.4ex\hbox{\hskip0.25ex$\underline{\phantom{<}}$}}}}
\def\ggeq{\mathrel{\vcenter{\offinterlineskip\hbox{$\gg$} \vskip -1.4ex\hbox{\hskip0.25ex$\underline{\phantom{<}}$}}}}
\def\llleq{\mathrel{\vcenter{\offinterlineskip\hbox{$\lll$} \vskip -1.4ex\hbox{\hskip0.3ex$\underline{\phantom{\ll}}$}}}}
\newcommand\Sol[1]{ {{#1}:T
\mathrel{\vcenter{\offinterlineskip
\hbox{\hskip.4ex${\mathsmaller{\bm \lambda}}$}\vskip 0.2ex \hbox{$\leadsto$} \vskip -0.2ex \hbox{\hskip.3ex${\mathsmaller\prec}$}}}\mathfrak W}}
\newcommand\LongSol[2]{{#1}:T
\mathrel{\vcenter{\offinterlineskip
\hbox{\hskip.4ex${\mathsmaller{\bm \lambda}}$}\vskip 0.2ex \hbox{$\leadsto$} \vskip -0.2ex \hbox{\hskip.3ex${\mathsmaller\prec}$}}}
\mathfrak W\wedge |#1|>#2}
\newcommand\PreSol[2]{ {#1}:T
\mathrel{\vcenter{\offinterlineskip
\hbox{\hskip.4ex${\mathsmaller{\bm \lambda}}$}\vskip 0.2ex \hbox{$\leadsto$} \vskip -0.2ex \hbox{\hskip.3ex${\mathsmaller\prec}$}}}
\mathfrak W\mathrel |{#2}}
\newcommand\LongSolX[4]{{#1}:T
\mathrel{\vcenter{\offinterlineskip
\hbox{\hskip.4ex${\mathsmaller{\bm \lambda}}$}\vskip 0.2ex \hbox{$\leadsto$} \vskip -0.2ex \hbox{\hskip.3ex${\mathsmaller{#3}}$}}}
\mathfrak W\mathrel |#4\wedge |#1|>#2}
\newcommand\PreSolX[2]{{#1}: T
\mathrel{\vcenter{\offinterlineskip
\hbox{\hskip.4ex${\mathsmaller{\bm \lambda}}$}\vskip 0.2ex \hbox{$\leadsto$} \vskip -0.2ex \hbox{\hskip.3ex${\mathsmaller X}$}}}
{\mathfrak W}\mathrel |{#2}}
\newcommand\SolFun[2]{ #1\simeq \{{T
\mathrel{\vcenter{\offinterlineskip
\hbox{\hskip.4ex${\mathsmaller{\bm \lambda}}$}\vskip 0.2ex \hbox{$\leadsto$} \vskip -0.2ex \hbox{\hskip.3ex${\mathsmaller\prec}$}}}
\mathfrak W}\}_{ {#2}}}
\def\Provfor{{\tt IPC}^\prec}
\def\provcode{{\tt Proof}^\prec_T}
\def\provcodeX{{\tt Proof}_X}
\def\transin{{\tt TI}}
\def\compax{{\tt CA}}
\def\provfor{{\tt Proof}}
\newcommand\limfor[1]{#1\simeq{\tt Lim}\{{T
\mathrel{\vcenter{\offinterlineskip
\hbox{\hskip.4ex${\mathsmaller{\bm \lambda}}$}\vskip 0.2ex \hbox{$\leadsto$} \vskip -0.2ex \hbox{\hskip.3ex${\mathsmaller\prec}$}}}
\mathfrak W}\}}
\newcommand\limvec{\overrightarrow{\tt Lim}\{ {T
\mathrel{\vcenter{\offinterlineskip
\hbox{\hskip.4ex${\mathsmaller{\bm \lambda}}$}\vskip 0.2ex \hbox{$\leadsto$} \vskip -0.2ex \hbox{\hskip.3ex${\mathsmaller\prec}$}}}
\mathfrak W}\}}
\newcommand\nlimfor[1]{#1\not\simeq{\tt Lim}\{{T
\mathrel{\vcenter{\offinterlineskip
\hbox{\hskip.4ex${\mathsmaller{\bm \lambda}}$}\vskip 0.2ex \hbox{$\leadsto$} \vskip -0.2ex \hbox{\hskip.3ex${\mathsmaller\prec}$}}}
\mathfrak W}\}}
\def\lb{\left\llbracket}
\def\rb{\right\rrbracket}
\newcommand{\rulefor}[5]{{\ensuremath{{\tt Rule}_{#1}^{#4}(#2,#3 \mathrel |#5)}}\xspace}
\newcommand{\cons}[1]{\langle{#1}\rangle^\prec_T}
\newcommand{\prov}[1]{[{#1}]^\prec_T}
\newcommand{\provx}[2]{[{#1}]_{#2}}
\newcommand{\consx}[2]{\langle{#1}\rangle_{#2}}
\newcommand{\glp}{{\ensuremath{\mathsf{GLP}}}\xspace}
\newcommand{\pa}{\ensuremath{{\mathrm{PA}}}\xspace}
\newcommand{\gl}{{\ensuremath{\mathsf{GL}}}\xspace}
\newcommand{\ea}{\ensuremath{{\rm{EA}}}\xspace}
\newcommand{\base}{\ensuremath{{\mathrm{ACA}}_0}\xspace}
\newcommand{\la}{\langle}
\newcommand{\ra}{\rangle}
\def\lb{\left\llbracket}
\def\rb{\right\rrbracket}
\def\fmodels{\xymatrix{
\ar@{|=}[r]^{<\omega}&
}
}
\def\nmodels{\xymatrix{
\ar@{|=}[r]^{N}&
}
}
\def\<{\left <}
\def\nc{{\Box}}
\def\ps{{\Diamond}}
\def\mlang{{\sf L}_{[\cdot]}}
\def\alang{{\sf L}^2_\forall}
\def\peq{\preccurlyeq}
\def\seq{\succcurlyeq}
\def\>{\right >}
\DeclareSymbolFont{AMSb}{U}{msb}{m}{n}
\DeclareMathSymbol{\N}{\mathbin}{AMSb}{"4E}
\DeclareMathSymbol{\Z}{\mathbin}{AMSb}{"5A}
\DeclareMathSymbol{\R}{\mathbin}{AMSb}{"52}
\DeclareMathSymbol{\Q}{\mathbin}{AMSb}{"51}
\DeclareMathSymbol{\I}{\mathbin}{AMSb}{"49}
\DeclareMathSymbol{\C}{\mathbin}{AMSb}{"43}
\newcommand{\comment}[2]{{\color{red}{\bf #2}}}
\begin{document}

\title{The omega-rule interpretation of transfinite provability logic}
%%%%%%%%%%%%%%%%%JSL
%\author{David Fern\'{a}ndez-Duque${}^{\dag}$ \and Joost J. Joosten${}^{\dag\dag}$}
%\address{\dag Group for Computational Logic\\ Universidad de Sevilla\\dfduque@us.es\\ \dag \dag Department of Logic, History and Philosophy of Science,\\ University of Barcelona %\\ jjoosten@ub.edu}
%%%%%%%%%%%%%%%%%JSL

%%%%%%%%%%%%%%%%%not JSL
\author{David Fern\'andez Duque,\\
Universidad de Sevilla\\
\ \\
Joost J. Joosten\\
Universitat de Barcelona}
%%%%%%%%%%%%%%%%%not JSL

\maketitle%not jsl

%%%%%%%%%%%%%%%%%%%%%%%%%%%%%%%%%%%%%%%%%%%%%%%%%%%%%%%%%%%%%%%%%%%%%%%%%%%%
\begin{abstract}
Given a recursive ordinal $\Lambda,$ the transfinite provability logic $\glp_\Lambda$ has for each $\xi < \Lambda$ a modality $[\xi]$ with the intention of representing a sequence of provability predicates of increasing strength. One possibility is to read $[\xi]\phi$ as {\em $\phi$ is provable in $T$ using an $\omega$-rule of depth $\xi$,} where $T$ is a second-order theory extending \base. 

In this paper we shall fomalize this notion in second-order arithmetic. Our main results are that, under some fairly general conditions for $T$, the logic $\glp_\Lambda$ is sound and complete for the resulting interpretation.
\end{abstract}

\section{Introduction}

One compelling and particularly successful interpretation of modal logic is to think of $\nc\phi$ as {\em the formula $\phi$ is provable,} where {\em provability} is understood within a formal theory $T$ capable of coding syntax. This was suggested by G\"odel; indeed, if we use $\ps\phi$ as a shorthand for $\neg\nc\neg\phi$, the Second Incompleteness Theorem could be written as $\ps\top\rightarrow\ps\nc\bot.$ It took some time, however, for a complete set of axioms to be assembled, namely until L\"ob showed $\nc(\nc\phi\to\phi)\to\nc\phi$ to be valid. It took longer still for the resulting calculus to be proven complete by Solovay \cite{Solovay:1976}. The resulting modal logic is called $\mathsf{GL}$ (for G\"odel-L\"ob).

Later, Japaridze \cite{Japaridze:1988} enriched the language of \gl by adding a sequence of provability modalities $[n]$, for $n<\omega$. The modality $[0]$ is now used as before to state that $\phi$ is derivable within some fixed formal theory $T$, while higher modalities represent provability in stronger and stronger theories. There are many arithmetic interpretations for Japaridze's logic, and one of them also stems from an idea of G\"odel, who introduced the notion of a theory $T$ being {\em $\omega$-consistent}: $T$ is {\em $\omega$-consistent}  whenever for any formula $\phi$, if $T\vdash \phi(\bar n)$ for all $n\in \mathbb{N}$, then $T\nvdash \exists x \neg \phi(x)$. Dually to this notion one can define a notion of $\omega$-provability: $\phi$ is $\omega$-provable in $T$ whenever $T+\neg \phi$ is $\omega$-inconsistent.  One may then interpret $[1]\phi$ as {\em $\phi$ is $\omega$-provable;} a detailed discussion of this is given in Boolos \cite{Boolos:1993:LogicOfProvability}. 

One can then go on to interpret the higher modalities by using iterated $\omega$-rules. This idea was already explored by Japaridze and gives an interpretation for which the polymodal logic $\glp_\omega$ is sound and complete; Ignatiev \cite{Ignatiev:1993:StrongProvabilityPredicates} and Beklemishev \cite{Beklemishev:2011:SimplifiedArithmeticalCompleteness} later improved on this result. This logic is much more powerful than $\mathsf{GL}$, and indeed Beklemishev has shown how it can be used to perform an ordinal analysis of Peano Arithmetic and its natural subtheories \cite{Beklemishev:2004:ProvabilityAlgebrasAndOrdinals}.

Our (hyper)arithmetical interpretations will be a straightforward generalization of Japaridze's where we read $[\alpha]_T\phi$ as {\em The formula $\phi$ is derivable in $T$ using $\omega$-rules of nesting depth at most $\alpha$;} we shall make this precise later. We do this by considering a well-ordering $\prec$ on the naturals and defining a logic $\glp_\prec$. This is a variation of $\glp_\Lambda$ already studied by the authors and Beklemishev \cite{Beklemishev:2005:VeblenInGLP,FernandezJoosten:2012:ModelsOfGLP}, with the sole difference that we shall represent ordinals as natural numbers rather than appending them as external entities.

Our main result is that $\glp_\prec$ is sound and complete for arithmetical interpretations on `suitable' theories $T$; we will mainly work with extensions of ${\rm ACA}_0$, but as we shall discuss later, it is possible to work over a weaker base theory.

\paragraph{Plan of the paper} Section \ref{syntax} gives a quick review of the logics $\glp_\prec$ as well as their Kripke semantics, and Section \ref{SecSOA} of second-order arithmetic. Section \ref{SecNestedOmega} formalizes the notion of iterated $\omega$-provability in second-order arithmetic; this notion is most naturally interpreted in {\em introspective theories,} introduced in Section \ref{SecIntTheo}. Section \ref{secsound} proves that $\glp_\prec$ is sound for our interpretation. In order to prove completeness, Section \ref{jsec} gives a brief review of the modal logic $\sf J$, which is used in the completeness proof provided in Section \ref{SecCom}. Finally, Appendix \ref{AppBaseTheory} discusses our choice of base theory and Appendix \ref{section:AlternativePresentations} possible variations on the notion of iterated $\omega$-provability.

%%%%%%%%%%%%%%%%%%%%%%%%%%%%%%%%%%%%%%%%%%%%%%%%%%%%

\section{The logic $\mathsf{GLP}_\prec$}\label{syntax}

Formulas of the language $\mlang$ are built from $\bot$ and countably many propositional variables ${p}\in{\mathbb P}$ using Boolean connectives $\neg,\wedge$ and a modality $[\xi]$ for each natural number $\xi$. As is customary, we use $\<\xi\>$ as a shorthand for $\neg[\xi]\neg$.

If $\prec$ is a binary relation on the naturals, the logic $\mathsf{GLP}_\prec$ is given by the following rules and axioms:
\begin{enumerate}
\item all propositional tautologies{,}
\item $[\xi](\phi\to\psi)\to([\xi]\phi\to[\xi]\psi)$ for all $\xi${,}
\item {$[\xi]([\xi]\phi \to \phi)\to[\xi]\phi$ for all $\xi$}{,}
\item $\<\zeta\>\phi\to\<\xi\>\phi$ for $\xi\prec\zeta${,}
\item $\<\xi\>\phi\to [\zeta]\<\xi\>\phi$ for $\xi\prec\zeta$,
\item Modus Ponens, Substitution and Necessitation: $\displaystyle \dfrac{\phi}{[\xi]\phi}$.
\end{enumerate}
We will normally be interested in the case where $\prec$ is a well-order, in which case it is known that $\langle \xi\rangle\top$ is consistent with $\glp_\prec$ for all $\xi$ (see \cite{FernandezJoosten:2012:ModelsOfGLP}). In case $\prec$ is a recursive well-order of order-type $\Lambda$ we shall often write $\glp_\Lambda$ instead of $\glp_\prec$ making the necessary definitional changes for finite order types $\Lambda$.
% I see that you removed my remark "making the necessary definitional changes for finite order types $\Lambda$." but since I mention GLP_2 somewhere in the paper, the remark is necessary. Remember that we have for each natural number xi a modality [xi] which is difficult if you wish to work with finite order types. I do not want to go into details, but do wish to be correct, that's why I included 10 words here.

We shall also work with Kripke semantics. A {\em Kripke frame} is a structure $\mathfrak F=\<W,\<R_i\>_{i<I}\>$, where $W$ is a set and $\<R_i\>_{i<I}$ a family of binary relations on $W$. A {\em valuation} on $\mathfrak F$ is a function $\lb\cdot\rb:\mlang\to \mathcal P(W)$ such that
\[
\begin{array}{lcl}
\lb\bot\rb&=&\varnothing\\\\
\lb\neg\phi\rb&=&W\setminus\lb\phi\rb\\\\
\lb\phi\wedge\psi\rb&=&\comment{W}{}\lb\phi\rb\cap\lb\psi\rb\\\\
\lb\<i\>\phi\rb&=&R^{-1}_i\lb\phi\rb.
\end{array}
\]
A {\em Kripke model} is a Kripke frame equipped with a valuation $\lb\cdot\rb$. Note that propositional variables may be assigned arbitrary subsets of $W$. Clearly, a valuation is uniquely determined once we have fixed its values for the propositional variables.
%
%Often we will write $\<\mathfrak F,x\>\models\psi$ instead of $x\in\lb \psi\rb$. 
% JjJ NOTE: It is funny, in the whole paper there was not a single occurrence of this notation, other than here :-) So, I have omitted this sentence JjJ
%
%
As usual, $\phi$ is {\em satisfied} on $\la \mathfrak F, \lb\cdot\rb\ra$ if $\lb\phi\rb\not=\varnothing$, and {\em valid} on $\la \mathfrak F, \lb\cdot\rb\ra$ if $\lb\phi\rb=W$.

It is well-known that polymodal $\mathsf{GL}$ is sound for $\mathfrak F$ whenever $R^{-1}_i$ is well-founded and transitive, in which case we write $R^{-1}_i$ as $<_i$. However, constructing models of $\glp_\Lambda$ is substantially more difficult than constructing models of $\mathsf{GL}$, since the full logic $\mathsf{GLP}_\Lambda$ is not sound and complete for any class of Kripke frames. In Section \ref{jsec} we will circumvent this problem by working in Beklemishev's $\sf J$, a slightly weaker logic that is complete for a manageable class of Kripke frames.

%%%%%%%%%%%%%%%%%%%%%%%%%%%%%%%%%%%%%%%%%%%%%%%%%%%%%%%%%%

\section{Second-order arithmetic}\label{SecSOA}

Aside from the modal language $\mlang$, we will work mainly in the language $\alang$ of second-order arithmetic.

We fix some primitive recursive G\"odel numbering mapping a formula $\psi\in\alang$ to its corresponding G\"odel number $\ulcorner \psi \urcorner$, and similarly for terms and sequents of formulas (used to represent derivations). Moreover, we fix some set of \emph{numerals} which are terms so that each natural number $n$ is denoted by exactly one numeral written as $\overline n$. Since we will be working mainly inside theories of arithmetic, we will often identify $\psi$ with $\ulcorner \psi \urcorner$ or even with $\overline{\ulcorner \psi \urcorner}$ for that matter.

Our results are not very sensitive to the specific choice of primitive sym\-bols; however, to simplify notation, we will assume we have the following terms available:

\begin{enumerate}
\item A term $\langle x,y\rangle$ which returns a code of the ordered pair formed by $x$ and $y$.

\item A term $x[y/z]$ which, when $x$ codes a formula $\phi$, $y$ a variable $v$ and $z$ a term $t$, returns the code of the result of subsituting $t$ for $v$ in $\phi$. Otherwise, its value is unspecified, for example it could be the default $\ulcorner\bot\urcorner$. We shall often just write $\phi (t)$ for this term if the context allows us to.

\item A term $x\to y$ which, when $x,y$ are codes for $\phi,\psi$, returns a code of $\phi\to\psi$, and similarly for other Boolean operators or quantifiers. The context should always clarify if we use the symbol $\to$ as a term or as a logical connective.

\item A term $\overline x$ mapping a natural number to the code of its numeral.

\item For every formula $\phi$, a term $\phi(\dot x)$ which, given a natural number $n$, returns the code of the outcome of $\phi[x/\bar n]$, i.e., the code of $\phi(\overline n)$.
\end{enumerate}

We will also use this notation in the metalanguage. The only purpose of assuming these terms exist is to shorten complex formulas, as the graphs of all these functions are definable by low level arithmetic formulas over most standard arithmetic languages.

As is customary, we use $\Delta^0_0$ to denote the set of all formulas (possibly with set parameters) where all quantifiers are ``bounded'', that is, of the form $\forall x<y \ \phi$ or $\exists x<y \ \phi$. We simultaneously define $\Sigma^0_{0}=\Pi^0_{0}=\Delta^0_0$ and $\Sigma^0_{n+1}$ to be the set of all formulas of the form $\exists x_0\hdots\exists x_n \phi$ with $\phi\in \Pi^0_{n}$ and similarly $\Pi^0_{n+1}$ to be the set of all formulas of the form $\forall x_0\hdots\forall x_n \phi$ with $\phi\in \Sigma^0_{n}$. We denote by $\Pi^0_\omega$ the union of all $\Pi^0_n$; these are the {\em arithmetic formulas.}

The classes $\Sigma^1_n,\Pi^1_n$ are defined analogously but using second-order quantifiers and setting $\Sigma_0^1 = \Pi^1_0 = \Delta^1_0 = \Pi^0_\omega$. It is well-known that every second-order formula is equivalent to another in one of the above forms.  If $\Gamma$ is a set of formulas, we denote by $\hat\Gamma$ the subset of $\Gamma$ where no set variables appear free.

We will say a theory $T$ is {\em representable} if there is a $\hat\Delta^0_0$ formula ${\provfor}_T(x,y)$ which holds if and only if $x$ codes a derivation in $T$ of a formula coded by $y$; in general we assume all theories to be representable. We may also assume without loss of generality that any derivation $d$ is a derivation of a unique formula $\phi$, for example by representing $d$ as a finite sequence of formulas whose last element is $\phi$. Also, we assume that every formula that is derivable has arbitrarily large derivations; this is generally true of standard proof systems, for example one may add many copies of an unused axiom or many redundant cuts. Whenever it does not lead to confusion we will work directly with codes rather than formulas; if $\phi$ is a natural number (supposedly coding a formula) we use $\Box_T\phi$ as a shorthand for $\exists y\ {\provfor}_T(y, {\phi})$.\\
\medskip

It is important in this paper to keep track of the second-order principles that are used; below we describe the most important ones. We use $<$ to denote the standard ordering on the naturals and $\Gamma$ denotes some set of formulas:
\begin{center}
\begin{tabular}{lll}

$\Gamma$-$\compax$&$\exists X\forall x(x\in X\leftrightarrow \phi(x))$&where $\phi\in\Gamma$;\\

${\tt I}$-$\Gamma$&$\phi(\overline 0)\wedge\forall x(\phi(x)\to\phi(x+\overline 1))\to\forall x\phi(x)$&where $\phi\in\Gamma$;\\

${\tt Ind}$&$\forall x (\,\forall \, y{<}x\, y\in X\rightarrow x\in X)\to\forall x\, x{\in} X$.&\\
\end{tabular}
\end{center}
We assume {\em all} theories extend two-sorted first-order logic, so that they include Modus Ponens, Generalization, etc., as well as Robinson's Arithmetic, i.e. Peano Arithmetic without induction.

Another principle that will be relevant to us is {\em transfinite recursion,} but this is a bit more elaborate to describe. For simplicity let us assume that $\alang$ contains only monadic set-variables; binary relations and functions can be represented by coding pairs of numbers. It will be convenient to establish a few conventions for working with binary relations in second-order arithmetic. First, let us write {\em $R$ is a binary relation} and {\em $f$ is a function:}
\begin{align*}
{\tt rel}(R)&=\forall x\Big(x\in R\to\exists y\exists z\big(x=\langle y,z\rangle\big)\Big),\\
{\tt funct}(f)&={\tt rel}(f)\wedge\forall x\exists! y(\langle x,y\rangle\in f).
\end{align*}
Here, $\exists!$ is the standard abbreviation for {\em there exists a unique.}

Also for simplicity, we may write $n\mathrel R m$ if $R$ represents a relation and $\langle n,m\rangle\in R$ as well as $n\not\mathrel R m$ for $\neg(\langle n,m\rangle\in R)$, or $n=f(m)$ if $\langle m,n\rangle\in f$ and $f$ is meant to be interpreted as a function. Further, it is possible to work with a second-order equality symbol, but it suffices to define $X\equiv Y$ by $\forall x(x\in X\leftrightarrow y\in Y)$.

It will also be important to represent ordinals in second-order arithmetic. For this we will reserve a set-variable $\prec$. Here, we will need to express {\em the relation $\prec$ is a linear order} and {\em the relation $\prec$ is well-ordered,} as follows:\\

\noindent ${\tt linear}(\prec):$
\[
\forall x\big(\neg(x\prec x)\wedge\forall y(x\prec y\vee y\prec x \vee y=x)\big)
\wedge \forall \, x,y,z\ (x{\prec} y \wedge y {\prec} z \to x {\prec} z);
\]
\noindent ${\tt wo}(\prec):$
\[
{\tt linear}(\prec)\wedge\forall X\,\Big(\exists x(x\in X)\to \exists y\forall z\big(z\prec y\to\neg (z\in X)\big)\Big).
\]
We will use Greek letters for natural numbers when viewed as ordered under $\prec$. When it is clear from context we may use natural numbers to represent finite ordinals, so that, for example, $0$ is the least element under $\prec$, independently of whether it truly corresponds to the natural number zero.

We shall often want that the elementary properties of $\prec$ be provable, for example,
\[
\xi \prec \zeta \ \rightarrow \ \Box_T \ \ \xi \prec \zeta.
\] 
This can be guaranteed if we work with recursive well-orders, in which case we assume $T$ contains an axiom $\forall\, x,y \ (x\prec y \leftrightarrow \sigma(x,y))$ for some $\hat \Sigma_1^0$ formula $\sigma(x,y)$.

{\em Transfinite recursion} is the principle that sets may be defined by iterating a formula along a well-order. To formalize this, let us consider a set $X$ whose elements are of the form $\langle \xi,x\rangle$. Write $X_\xi$ for $\{x \mid \langle \xi,x\rangle\in X\}$ and $X_{\prec\xi}$ for $\{x \mid \exists\, \zeta{\prec}\xi\ \langle \zeta,x\rangle \in X\}$. Then, given a set of formulas $\Gamma$ we define\\

\begin{center}
\begin{tabular}{lll}
${\tt TR}\text{-}\Gamma$&${\tt wo}(\prec)\rightarrow\exists X\forall \xi\forall x \ \ \Big ( x\in X_\xi\leftrightarrow \phi(x,X_{\prec\xi})\Big)$ & for $\phi\in\Gamma.$
\end{tabular}
\end{center}

With this, we may define the following systems of arithmetic:

\begin{center}
\begin{tabular}{llllr}
${\rm RCA}_0$&:=&${\tt I}$-$\Sigma^0_1$&+&$\Delta^0_0$-$\compax$\\
${\rm ACA}_0$&:=&${\tt Ind}$&+&$\Pi^0_\omega$-$\compax$\\
${\rm ATR}_0$&:=&$\tt Ind$&+&${\tt TR}$-$\Pi^0_\omega$.
\end{tabular}
\end{center}
We list these from weakest to strongest, but even ${\rm ATR}_0$ is fairly weak in the realm of second-order arithmetic. For convenience we will work mainly in \base, but later discuss how our techniques could be pushed down even to below ${\rm RCA}_0$ at the cost of slightly stronger transfinite induction. 

The system ${\rm ATR}_0$ is relevant because we will define iterated provability by recursion over the well-order $\prec$. However, as we shall see, we require much less than the full power of arithmetic transfinite recursion.\\

In various proofs we wish to reason by transfinite induction. By ${\transin}(\prec,\phi)$ we denote the transfinite induction axiom for $\phi$ along the ordering $\prec$:
\[
{\transin}(\prec,\phi) \ := \forall \xi \ (\forall \, \zeta{\prec}\xi \ \phi(\zeta)\to \phi(\xi)) \to \forall \xi \phi (\xi).
\]
We will write $\phi$-${\compax}$ instead of $\{\phi\}$-${\compax}$, i.e., the instance of the comprehension axiom stating that $\{ x \mid \phi(x) \}$ is a set. The following lemma tells us that we have access to transfinite induction for formulas of the right complexity:

\begin{lemma}\label{theorem:WellOrderingWithComprehensionImpliesTI}
In any second order arithmetic theory containing predicate logic we can prove
\[
{\tt wo}(\prec) \wedge \neg\phi\text{-}{\compax} \to {\transin}(\prec, \phi).
\]
\end{lemma}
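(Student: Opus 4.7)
The plan is a standard contrapositive-style argument using the minimality clause of well-foundedness. Assume the inductive step $\forall \xi (\forall \zeta \prec \xi \ \phi(\zeta) \to \phi(\xi))$, and suppose toward a contradiction that $\exists \xi_0 \ \neg \phi(\xi_0)$. By the hypothesis $\neg \phi\text{-}{\compax}$, the set $X := \{ x \mid \neg \phi(x) \}$ exists. From $\neg \phi(\xi_0)$ we get $\xi_0 \in X$, so $\exists x (x \in X)$.

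Next I would invoke the second conjunct of ${\tt wo}(\prec)$ with this $X$ to obtain some $\prec$-minimal $y$ with $\forall z (z \prec y \to \neg (z \in X))$. Unfolding the definition of $X$, this means $\forall \zeta \prec y \ \phi(\zeta)$. The assumed inductive step, instantiated at $y$, then yields $\phi(y)$. But by the choice of $y$ we also have $y \in X$, hence $\neg \phi(y)$, a contradiction. This gives $\forall \xi \ \phi(\xi)$, which is the conclusion of ${\transin}(\prec,\phi)$.

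All steps are routine first-order reasoning, so the only subtlety lies in making sure each ingredient is actually available in the ambient theory: the comprehension instance for $\neg \phi$ is supplied by the hypothesis $\neg \phi\text{-}{\compax}$, the minimal element is delivered by the well-foundedness clause of ${\tt wo}(\prec)$, and everything else is pure predicate logic. No induction scheme on the naturals is needed, which is why the lemma can be stated so generally without fixing a complexity class. The likely pitfall is a notational one — applying well-foundedness to $X$ rather than to some definable class — so I would make the explicit substitution $X \mapsto \{x \mid \neg\phi(x)\}$ the pivotal step, noting that this is exactly where the hypothesis $\neg\phi\text{-}{\compax}$ is consumed.
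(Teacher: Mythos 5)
Your argument is correct and is essentially the paper's own proof: both apply the comprehension instance for $\neg\phi$ to form the set of counterexamples, extract a $\prec$-minimal one via the second conjunct of ${\tt wo}(\prec)$, and observe that the inductive step fails (or, in your phrasing, is contradicted) at that minimal element. The only difference is presentational --- the paper argues by contraposition on ${\transin}(\prec,\phi)$ while you argue by contradiction from its antecedent --- which is logically the same move.
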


\AlternativeProof{}

\CurrentProof{
\begin{proof}
Reason in $T$ and assume ${\tt wo}(\prec) \wedge \neg\phi\text{-}{\compax}$. We prove ${\transin}(\prec, \phi)$ by contraposition. Thus, suppose that $\exists \lambda \neg \phi(\gamma)$. As $\{ \xi \mid \neg \phi(\xi) \}$ is a set, we can apply ${\tt wo}(\prec)$ to obtain the minimal such $\lambda$. Clearly for this minimal $\lambda$ we do not have $\forall \, \zeta{\prec}\lambda \ \ \phi(\zeta)\to \phi(\lambda)$.
\end{proof}
}

%%%%%%%%%%%%%%%%%%%%%%%%%%%%%%%%%%%%%%%%%%%%%%%%%%%%%%%%%%

\section{Nested $\omega$-rules}\label{SecNestedOmega}

In this section we shall formalize the notion of iterated $\omega$-rules inside second-order arithmetic. In Boolos (\cite{Boolos:1993:LogicOfProvability}) it is noted that multiple {\em parallel} applications of the $\omega$-rule do not add extra strength. For example, the rule that allows us to conclude $\sigma$ from 
\[
\begin{array}{lll}
\forall n & \vdash & \psi (\overline n)\\
\forall m & \vdash & \forall x \psi (x) \to \phi(\overline m)\\
 & \vdash & \forall x \, \phi (x) \ \to \ \sigma\\
\end{array}
\]
can actually be derived by a single application of the $\omega$ rule.

However, when we admit slightly less uniformity by allowing $\psi$ to depend on $m$ in this rule, and adding the premises $\forall n  \vdash  \psi_m (\overline n)$ we get our notion of $2$-provability. More generally, we may iterate this process to generate a hierarchy or stronger and stronger notions of $\xi$-provability for a recursive ordinal $\xi$. It is the \emph{nesting depth} that gives extra strength and not the number of applications.

We will use $\prov\lambda\phi$ to denote our representation of  {\em The formula $\phi$ is provable in $T$ using one application of an $\omega$-rule of depth $\lambda$ (according to $\prec$).}
The desired recursion for such a sequence of provability predicates is given by the following equivalence.\footnote{There are other reasonable ways of defining this recursion. In Appendix \ref{section:AlternativePresentations}, we shall discuss some possible alternatives.
}
\begin{equation}\label{PendejoUseShortLabels}
\prov\lambda\phi \ \leftrightarrow 
\ \Big( \Box_T \phi \vee \exists \, \psi\, \exists\, \xi{\prec} \lambda \ \big(\forall n \ \prov\xi\psi(\dot{n}) \ \wedge \ \Box_T (\forall x \psi (x) \to \phi) \big) \Big).
\end{equation}
As a first step in such a formalization, we will use a set $X$ as an `iterated provability class' IPC for short. Its elements are codes of pairs $\langle\lambda,\phi\rangle$, with $\lambda$ a code for an ordinal and $\phi$ a code for a formula; we use $\provx\lambda X \phi$ as a shorthand for $\langle \lambda,{ \phi}\rangle\in X$ and $\consx \lambda X\phi$ for $\langle\lambda,{\neg\phi}\rangle\not\in X$. Clearly, any IPC  will depend on a parameter $\prec$ whose intended interpretation is a well-ordering on the naturals. We then define a formula $\Provfor_T (X)$ (`iterated provability class') as a formalization of:\\

\noindent {\em $\provx \lambda X \phi$ if and only if
\begin{enumerate}
\item 
$\lambda=0$ and $\Box_T\phi$, or

\item there is a formula $\psi(x)$ and an ordinal $\xi\prec\lambda$ such that

\begin{enumerate}
\item for each $n<\omega$, $\provx\xi X{\psi(\overline n)}$, and

\item $\Box_T(\forall x\psi(x)\to\phi)$. 
\end{enumerate}
\end{enumerate}}
Intuitively, we understand $\Provfor_T(X)$ as stating ``$X$ is an iterated provability predicate'' and in the remainder of this text we will use both `class' or `predicate' to refer to IPCs. Let us enter in a bit more detail:

\begin{definition}
Define $\rulefor T \lambda \phi \prec X$ by
\[
\exists \psi \, \exists \, \xi{\prec}\lambda\ 
\big(\forall n\, \provx \xi X{\psi(\dot n)} \wedge \Box_T(\forall x \psi(x)\to \phi)\Big)
\]
and let $\Provfor_T(X)$ be the formula 
\[
\forall z \ \Big[ z\in X\leftrightarrow\exists\lambda\exists \phi\, \Big( z=\langle\lambda,\phi \rangle 
\wedge(\Box_T\phi
\vee \rulefor T \lambda \phi \prec X \Big)\Big].
\]

Then, $\prov\lambda\phi$ is the $\Pi^1_1$-formula $\forall X(\Provfor_T(X)\to \provx \lambda X{\phi})$. 
\end{definition}

Note that the formulas $\provx  \lambda X\phi$ and $\consx  \lambda X\phi$ are  independent of $T$ and of $\prec$ and are merely of complexity $\Delta^0_0$.
Note also that for r.e. theories $T$ we have that $\rulefor T \lambda \phi \prec X$ is a $\Sigma^0_2$-formula whence $\Provfor_T( X)$ is a $\Pi^0_3$-formula. We can write the definition of $\Provfor_T(X)$ more succinctly as
\[
\Provfor_T(X) \ \leftrightarrow \ \forall \xi, \phi \ \Big(\provx \xi X \phi \ \leftrightarrow \ \big( \Box_T \phi \vee \rulefor T \xi \phi \prec X \big)\Big).
\]
From the definition of our provability predicates we easily obtain monotonicity: 
\begin{lemma}\label{theorem:monotonicity} 
Given theories $U,T$ where $U$ extends \base and $T$ is representable, we have that
\[
U\vdash(\xi\prec\zeta)\to([\xi]^\prec_T\phi \to[\zeta]^\prec_T\phi).
\]
\end{lemma}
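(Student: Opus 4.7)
The plan is to unpack the definition of $[\cdot]^\prec_T$ and exhibit a trivial witness for the rule-clause in $\Provfor_T(X)$, using the ``constant premise'' trick. Reason in $U$, fix $\xi$, $\zeta$, and $\phi$, assume $\xi \prec \zeta$, and take an arbitrary set $X$ with $\Provfor_T(X)$. Assuming $\provx{\xi}{X}{\phi}$, the goal is $\provx{\zeta}{X}{\phi}$; by the defining equivalence of $\Provfor_T$, this reduces to establishing $\Box_T \phi \vee \rulefor{T}{\zeta}{\phi}{\prec}{X}$.

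To produce a witness for the right disjunct, pick a variable $v$ not occurring free in $\phi$ and set $\psi(v) := \phi$. Then $\psi(\dot n) = \phi$ for every $n$, so the hypothesis $\provx{\xi}{X}{\phi}$ immediately yields $\forall n\, \provx{\xi}{X}{\psi(\dot n)}$. Moreover, $\forall v\, \psi(v) \to \phi$ is simply $\forall v\, \phi \to \phi$, a first-order tautology (since $v$ is not free in $\phi$); as $T$ contains predicate logic and is representable, $U$ proves $\Box_T(\forall v\, \psi(v) \to \phi)$. Because $\xi \prec \zeta$ by assumption, $\xi$ serves as the required witness ordinal, so $\rulefor{T}{\zeta}{\phi}{\prec}{X}$ holds, and hence $\provx{\zeta}{X}{\phi}$ by the defining equivalence of $\Provfor_T(X)$. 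Since $X$ was arbitrary, $\prov{\zeta}{\phi}$ follows.

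There is really no main obstacle: the whole argument is a direct unfolding of the recursion defining $\Provfor_T(X)$, together with the elementary fact that $\Box_T$ proves logical tautologies. The only bookkeeping concerns the choice of a fresh variable $v$ so that substitution behaves as intended; this is routine in the formalization of the syntactic operators. All steps are provable in \base\ (indeed in a considerably weaker base theory), so the lemma holds for any $U \supseteq \base$.
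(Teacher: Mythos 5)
Your proposal is correct and follows essentially the same route as the paper's own proof: both instantiate the rule-clause with the constant premise $\psi(v):=\phi$ for a variable $v$ not free in $\phi$, use $\Box_T(\forall v\,\phi\to\phi)$, and take $\xi$ as the witness ordinal below $\zeta$. No gaps.
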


\AlternativeProof{
We leave the proof as an exercise.
}

\CurrentProof{
\proof We reason within $U$. Suppose $[\xi]^\prec_T\phi$ holds as well as $\Provfor_T(X)$. Letting $x$ be any variable not appearing in $\phi$ we have that $\provx \xi X \phi(\bar n)$ holds for all $n<\omega$ as well as $\Box_T(\forall x\phi\to\phi)$. Thus, $\provx \zeta X\phi$ and, since $X$ was arbitrary, $\prov\zeta\phi$ holds.
\endproof
}

\begin{corollary}\label{theorem:SoundnessOfNecessitation}
Let $U$ and $T$ be as in Lemma \ref{theorem:monotonicity}. If $T\vdash \phi$, then $T \ \vdash  \ \prov \lambda \phi$.
\end{corollary}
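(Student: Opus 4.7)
The plan is to observe that the defining biconditional of $\Provfor_T(X)$ already builds in $\Box_T\phi$ as a sufficient condition for $\provx\lambda X\phi$ at \emph{every} ordinal $\lambda$, independently of the $\omega$-rule clause $\rulefor T\lambda\phi\prec X$. So the corollary reduces to two small observations: that $T\vdash\phi$ entails (provably) $\Box_T\phi$, and that the rest is a one-line unfolding of the definition of an IPC.

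For the first step, if $T\vdash\phi$ there is a concrete natural number $d$ coding a derivation of $\phi$ in $T$. Since ${\provfor}_T$ is $\hat\Delta^0_0$ by our standing assumption that $T$ is representable, the closed statement ${\provfor}_T(\bar d,\overline{\ulcorner\phi\urcorner})$ is a true $\hat\Delta^0_0$ sentence and hence provable already in Robinson's arithmetic, in particular in $U$; existential generalization then gives $U\vdash \Box_T\phi$. For the second step, I would reason inside $U$: fix an arbitrary $X$ with $\Provfor_T(X)$ and an arbitrary $\lambda$, and instantiate the succinct form
\[
\provx\lambda X\phi \ \leftrightarrow \ \big(\Box_T\phi \vee \rulefor T \lambda \phi \prec X\big)
\]
from the right-to-left direction of $\Provfor_T(X)$. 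The first disjunct is already available from the previous step, so $\provx\lambda X\phi$ follows at once. Since $X$ was arbitrary, we conclude $\prov\lambda\phi$, giving the desired conclusion.

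I do not anticipate any genuine obstacle here; the only point worth flagging is that the corollary is really a statement about the base clause of the definition of iterated provability, and does not require invoking Lemma~\ref{theorem:monotonicity} or any genuine transfinite reasoning along $\prec$. The corollary therefore holds uniformly in $\lambda$ and requires no assumptions on $\prec$ beyond those needed to state $\prov\lambda\phi$.
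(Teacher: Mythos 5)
Your proof is correct, but it takes a genuinely different route from the paper's. The paper derives the corollary as an actual corollary of Lemma \ref{theorem:monotonicity}: from $T\vdash\phi$ it gets $U\vdash\Box_T\phi$ by $\Sigma^0_1$-completeness, hence $U\vdash\prov 0\phi$, and then lifts $0$ to $\lambda$ by monotonicity. You instead observe that the formal definition of $\Provfor_T(X)$ places the disjunct $\Box_T\phi$ in the defining equivalence for $\provx{\lambda}{X}\phi$ \emph{uniformly in} $\lambda$ (unlike the informal gloss preceding Definition 4.1, which restricts that clause to $\lambda=0$), so $\Box_T\phi\to\provx{\lambda}{X}\phi$ follows for every $X$ and every $\lambda$ in one step, with no appeal to the $\omega$-rule clause or to monotonicity. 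Both arguments are sound; yours is the more elementary decomposition and, as you note, it sidesteps the (minor) need for $U$ to verify $0\prec\lambda$ (or to treat $\lambda=0$ separately) that is implicit in the paper's application of Lemma \ref{theorem:monotonicity}. It is essentially the generalization of the paper's own remark \eqref{equation:ProvabilityImpliesZeroProv} from $0$ to arbitrary $\lambda$, and is also what Lemma \ref{theorem:RecursiveEquationProved}.\ref{theorem:item:NoIntrospectivityNeeded} later records. One cosmetic point: the conclusion should be read as $U\vdash\prov\lambda\phi$ (as both your argument and the paper's proof actually establish), the ``$T\vdash$'' in the statement being a slip, since $T$ is only assumed representable there.
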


\AlternativeProof{}

\CurrentProof{
\begin{proof}
If $T\vdash \phi$ then $\Box_T \phi$ is a true $\hat \Sigma_1^0$ sentence whence $U\vdash \Box_T \phi$. Thus also $U\vdash \prov 0 \phi$ and by monotonicity (Lemma \ref{theorem:monotonicity}), we get $U\vdash \prov \lambda \phi$.
\end{proof}
}

Our notion of $\xi$-provability, $\prov \xi  { }$, is a very weak one as it has a universal quantification over all provability predicates $X$ and it may be the case that there are no such predicates. Dually, the notion of consistency $\cons \xi  {}$  is very strong as it in particular asserts the existence of a provability predicate. In particular, we always provably have
\begin{equation}\label{equation:ProvabilityImpliesZeroProv}
\Box_T \phi \to \prov 0 \phi.
\end{equation}
However, we can in general not prove $\prov 0\phi \rightarrow \nc_T\phi$. Nevertheless, the two notions of provability coincide under the assumption that a provability predicate exists:

\begin{lemma}\label{zerolemm}
Given a representable theory $T$ and a theory $U$ that extends \base we have 
\[
U \vdash \exists X \Provfor_T(X) \to (\Box_T \phi \ \leftrightarrow \ \prov 0 \phi ).
\]
\end{lemma}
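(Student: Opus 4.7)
The plan is to split the biconditional into its two implications, since one of them is essentially already given to us. The left-to-right direction $\Box_T \phi \to \prov 0 \phi$ is immediate from equation~(\ref{equation:ProvabilityImpliesZeroProv}), and crucially does not require the hypothesis $\exists X\, \Provfor_T(X)$. So the entire content of the lemma lies in the converse implication.

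For the converse, I would reason in $U$ as follows. Fix a witness $X$ for $\exists X\, \Provfor_T(X)$, and assume $\prov 0 \phi$. Unfolding the $\Pi^1_1$ definition of $\prov 0 \phi$ and instantiating its universal second-order quantifier at this particular $X$ (which by choice satisfies $\Provfor_T(X)$) yields $\provx 0 X \phi$. Now reading the equivalence supplied by $\Provfor_T(X)$ at $\xi = 0$ gives
\[
\Box_T \phi \ \vee \ \rulefor T 0 \phi \prec X.
\]
By definition, the second disjunct is an existential statement beginning with $\exists \xi \prec 0$. Since the paper fixes the convention that $0$ denotes the $\prec$-minimum, we have $\neg \exists \xi (\xi \prec 0)$, so this disjunct is vacuously false. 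Therefore $\Box_T \phi$ holds, as required.

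There is no real technical obstacle: the whole argument is the observation that the recursive clause for $\lambda = 0$ degenerates because the $\prec$-minimum has no predecessors, so the only way for a formula to be $0$-provable relative to a given IPC $X$ is for it to already be a $\Box_T$-theorem. The only points that deserve a moment of care are the convention identifying the symbol $0$ with the $\prec$-minimum (so that $\rulefor T 0 \phi \prec X$ indeed collapses) and the existential instantiation of $X$, both of which are routine. Note also that the argument uses nothing about $U$ beyond two-sorted predicate logic together with the definition of $\Provfor_T$, so the hypothesis $U \supseteq \base$ is comfortably sufficient.
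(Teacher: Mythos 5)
Your proof is correct and is exactly the argument the paper intends (the paper itself leaves the proof to the reader): the forward direction is immediate from the definition of $\prov 0 \phi$, and the converse follows by instantiating the second-order universal quantifier at a witness of $\exists X\,\Provfor_T(X)$ and observing that the $\omega$-rule clause collapses at $\lambda=0$ because the $\prec$-minimum has no predecessors. Nothing to add.
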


\CurrentProof{\proof The proof is straightforward and is left to the reader.\endproof}

\AlternativeProof{
\begin{proof}
Reasoning within $U$, use the assumption that $\exists X \, \Provfor_T(X)$ to choose a provability predicate $Y$ and suppose $\prov 0 \phi$, so that in particular $\provx 0Y\phi$. Then we have either $\rulefor T 0 \phi \prec Y$ or $\Box_T \phi$. As $\rulefor T \lambda\phi\prec Y$ cannot hold for $\lambda =0$ we get $\Box_T \phi$, giving us one implication.

The other direction does not depend on the existence of a provability predicate, for indeed if $\nc_T\phi$ holds, then by definition any provability $X$ satisfies $\provx 0X\phi$, and since $X$ is arbitrary we obtain $\prov 0\phi$.
\end{proof}
}

In the field of formalized provability one often uses formalized $\Sigma_1^0$ completeness (see e.g. \cite{Boolos:1993:LogicOfProvability}):

\begin{lemma}
Let $T$ be some representable arithmetic theory with induction for all $\hat\Delta_0^0$ formulas (i.e., $\Delta^0_0$ without free set variables) and where exponentiation is provably total. Let $\sigma$ be a $\hat\Sigma^0_1$ formula. We have
\[
T \vdash \sigma \to \Box_T \sigma .
\]
\end{lemma}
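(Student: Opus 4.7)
The plan is to proceed by external induction on the logical build-up of $\sigma$, after first reducing to the case where $\sigma$ is in prenex normal form $\exists \vec x\,\delta(\vec x)$ with $\delta$ a $\hat\Delta^0_0$ formula. The heart of the argument is establishing \emph{provable $\hat\Delta^0_0$-completeness}, i.e.\ simultaneously proving, by external induction on the complexity of $\delta$, the two schemata
\[
T \vdash \delta(\vec x) \to \Box_T \delta(\dot{\vec x}) \qquad\text{and}\qquad T \vdash \neg\delta(\vec x) \to \Box_T \neg\delta(\dot{\vec x}).
\]

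For the base case I would first check that for each term $t(\vec x)$ of the language, $T$ proves $t(\dot{\vec x}) = \overline{t(\vec x)}$; this reduces to showing that $T$ internally evaluates closed terms to their numeral, which is routine for Robinson arithmetic extended by the provable totality of exponentiation (needed so that the codes of the numerals $\bar n$ are representable terms). From this, the atomic $\hat\Delta^0_0$ case follows because $T$ proves its own axioms for $=$ and $<$ on numerals. The Boolean connective cases follow by distributing $\Box_T$ over $\wedge$ and $\vee$ and by invoking the companion negation clause of the induction hypothesis.

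The main obstacle, as is standard, is the bounded quantifier case. To establish
\[
T \vdash \forall x {<} y\,\delta(x,\vec z) \to \Box_T \forall x{<}\dot y\,\delta(x,\dot{\vec z})
\]
I would reason inside $T$ and argue by $\hat\Delta^0_0$-induction on $y$, using that $T$ proves the equivalence
\[
\forall x{<}\overline{n+1}\,\delta(x) \ \leftrightarrow\ \bigl(\forall x{<}\bar n\,\delta(x)\ \wedge\ \delta(\bar n)\bigr)
\]
to reduce the successor step to the induction hypothesis together with the atomic/connective cases already handled. This is precisely where the assumption that $T$ has induction for all $\hat\Delta^0_0$ formulas is used; the dual clause for $\exists x{<}y$ and the corresponding negation clauses are handled symmetrically, observing that the negation of a bounded formula is again equivalent to a bounded formula so the schemata remain inside the inductive class.

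Having provable $\hat\Delta^0_0$-completeness in hand, the $\hat\Sigma^0_1$-step is immediate: reasoning inside $T$, from $\exists \vec x\,\delta(\vec x)$ we fix witnesses $\vec n$, apply $\hat\Delta^0_0$-completeness to obtain $\Box_T\delta(\dot{\vec n})$, and then use that $T$ proves the existential introduction schema $\Box_T(\delta(\dot{\vec n}) \to \exists \vec x\,\delta(\vec x))$ together with closure of $\Box_T$ under modus ponens to conclude $\Box_T\sigma$, as required.
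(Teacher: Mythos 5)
First, a point of reference: the paper does not actually prove this lemma --- it is stated as a standard fact with a pointer to Boolos --- so your write-up is being measured against the classical textbook argument rather than anything in the text. Your overall plan (simultaneous positive and negative provable completeness for $\hat\Delta^0_0$ formulas by external induction on their build-up, followed by fixing witnesses for the outer existential quantifiers) is exactly that classical argument, and your base case, Boolean cases, and final $\hat\Sigma^0_1$ step are fine.

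There is, however, a genuine gap in the bounded-quantifier case. You propose to prove
\[
\forall x{<}y\,\delta(x,\vec z) \to \Box_T\, \forall x{<}\dot y\,\delta(x,\dot{\vec z})
\]
by $\hat\Delta^0_0$-induction on $y$, and you identify this as ``precisely where'' the $\hat\Delta^0_0$-induction hypothesis is used. But this formula is not $\hat\Delta^0_0$: the antecedent is $\hat\Delta^0_0$, while $\Box_T$ is an unbounded existential quantifier over a $\hat\Delta^0_0$ matrix, so the induction formula is $\hat\Sigma^0_1$, and the stated hypotheses give you no induction for it. (Over a theory with $\hat\Sigma^0_1$-induction your step would be fine, but the lemma is aimed at weak theories in the style of $\mathrm{I}\Delta_0+\exp$.) The standard repair is to prove a quantitative version of $\hat\Delta^0_0$-completeness: by the same induction one shows that $T$ proves $\exists p\le t(\vec x)$ of the relevant $\provfor_T$-statement for $\delta(\dot{\vec x})$, resp.\ its negation, where $t$ is an explicit term built from exponentiation that majorizes the proof being constructed. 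With the witness bounded, the induction formula becomes a bounded formula in the language with $\exp$, for which $\hat\Delta^0_0$-induction together with the provable totality of exponentiation suffices. This is also where exponentiation really earns its keep --- not only for the totality of $n\mapsto\ulcorner\bar n\urcorner$, as you suggest, but for bounding the sizes of the proofs produced at each stage. Without this bounding device the argument as written does not go through under the lemma's hypotheses.
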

Of course, it makes really no sense to speak of provable $\Sigma_1^0$ completeness where we allow free set variables. In particular we cannot apply it to our notion $\provx 0 X \phi$, which is why Lemma \ref{zerolemm} will often be useful.

A useful fact is that $\prov\lambda\phi$ is well-defined in the following sense:

\begin{lemma}\label{provunique}
Given theories $U,T$ where $U$ extens \base and $T$ is representable, we have that $U$ proves
\[
{\tt wo}(\prec)\to\forall X\forall Y\Big(\Provfor_T(X)\wedge{\Provfor_T}(Y)\to\forall x\ (x{\in} X\leftrightarrow x{\in} Y)\Big).
\]
\end{lemma}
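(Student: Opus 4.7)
The plan is to prove the statement by transfinite induction on $\lambda$ along $\prec$, where the induction formula asserts that $X$ and $Y$ agree on all pairs with first coordinate $\lambda$.

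\medskip

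More precisely, reasoning in $U$, assume $\mathtt{wo}(\prec)$ together with $\Provfor_T(X)\wedge\Provfor_T(Y)$, and set
\[
\theta(\lambda) \ := \ \forall \phi \ \big(\provx \lambda X \phi \leftrightarrow \provx \lambda Y \phi\big).
\]
This formula is $\Pi^0_1$ in the set parameters $X,Y$, so by arithmetical comprehension (available in \base) the set $\{\lambda \mid \theta(\lambda)\}$ exists. By Lemma \ref{theorem:WellOrderingWithComprehensionImpliesTI}, we may apply transfinite induction along $\prec$ to $\theta$, so it suffices to prove the induction step $\forall\xi{\prec}\lambda\ \theta(\xi)\to\theta(\lambda)$.

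\medskip

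Fix $\lambda$ and assume $\forall \xi \prec \lambda\ \theta(\xi)$. Let $\phi$ be arbitrary and suppose $\provx \lambda X \phi$. Using the defining equivalence of $\Provfor_T(X)$, either $\Box_T \phi$, in which case the defining equivalence of $\Provfor_T(Y)$ immediately yields $\provx \lambda Y \phi$; or $\rulefor T \lambda \phi \prec X$, which supplies some formula $\psi$ and some $\xi \prec \lambda$ with $\forall n\, \provx \xi X{\psi(\dot n)}$ and $\Box_T(\forall x\, \psi(x)\to \phi)$. By the induction hypothesis $\theta(\xi)$, we can swap $X$ for $Y$ on the universal premise and conclude $\forall n\, \provx \xi Y{\psi(\dot n)}$, hence $\rulefor T \lambda \phi \prec Y$, and therefore $\provx \lambda Y \phi$. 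The converse implication is symmetric, completing the induction step. Transfinite induction yields $\forall\lambda\,\theta(\lambda)$.

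\medskip

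It remains to extend agreement from pairs $\langle\lambda,\phi\rangle$ to arbitrary natural numbers $z$: but the defining equivalence of $\Provfor_T(X)$ (and of $\Provfor_T(Y)$) explicitly states that $z\in X$ only if $z$ is of the form $\langle\lambda,\phi\rangle$ for some $\lambda,\phi$, and symmetrically for $Y$. Hence for any $z$ not of this form, both $z\in X$ and $z\in Y$ fail, and for $z$ of this form the equivalence follows from $\theta$. This gives $\forall x\ (x\in X\leftrightarrow x\in Y)$ as required.

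\medskip

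I do not anticipate a serious obstacle: the argument is a routine fixed-point-uniqueness style transfinite induction. The only delicate point is justifying the use of transfinite induction for the formula $\theta$, which has set parameters; this is where Lemma \ref{theorem:WellOrderingWithComprehensionImpliesTI} together with $\Pi^0_\omega$-$\compax$ in \base is essential, and is also the reason the lemma assumes $U$ extends \base rather than a weaker base theory.
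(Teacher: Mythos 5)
Your proof is correct and follows essentially the same route as the paper, which simply notes that the claim ``follows by a simple induction over $\prec$''; you have filled in that induction (using the defining equivalence of $\Provfor_T$ together with Lemma \ref{theorem:WellOrderingWithComprehensionImpliesTI} and arithmetical comprehension to justify the transfinite induction) in exactly the intended way.
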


\CurrentProof{
\proof
This follows by a simple induction over $\prec$.
\endproof}

\AlternativeProof{
\proof
Reason in $U$ and assume ${\tt wo}(\prec)$, $\Provfor_T(X)$ and $\Provfor_T(Y)$. We need to see that $\forall \lambda \, \forall\phi \ \Big(\langle \lambda, \phi \rangle \in X \leftrightarrow \langle \lambda, \phi \rangle \in Y\Big)$. Suppose for a contradiction that we can find a $\lambda$ where the equivalence does not hold. As \[\{ \lambda \mid \exists \phi \ \neg(\langle \lambda, \phi \rangle \in X \leftrightarrow \langle \lambda, \phi \rangle \in Y) \}\]
is a set, by ${\tt wo}(\prec)$ we can assume $\lambda$ to be minimally such. Now we use the fact that both $X$ and $Y$ are iterated provability operators. By the minimality of $\lambda$, both $X$ and $Y$ agree below $\lambda$. But this contradicts the defining equivalence for being a provability predicate. 
\endproof
}

Moreover, under the assumption of ${\tt wo}(\prec)$ we can also show a useful monotonicity property of our provability predicates.

\begin{lemma}\label{theorem:ProvPredMonotoneInT}
Let $U$ be some theory extending \base and let $T$ and $T'$ be representable theories. Write $T\subseteq T'$ as a shorthand for $\forall \phi\  \nc_T\phi\rightarrow \nc_{T'}\phi$.

Then,
\[
U + {\tt wo}(\prec) \vdash T\subseteq T' \to ( [\lambda]^\prec_T \phi \to [\lambda]^\prec_{T'} \phi).
\]
\end{lemma}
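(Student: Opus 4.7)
Reason in $U+{\tt wo}(\prec)$ and assume $T\subseteq T'$. Fix $\phi,\lambda$ and an arbitrary $Y$ with $\Provfor_{T'}(Y)$; by unfolding $[\lambda]^\prec_{T'}\phi$ it suffices to derive $\provx \lambda Y \phi$ from the hypothesis $[\lambda]^\prec_T\phi$, which recall says $\forall X(\Provfor_T(X)\to \provx \lambda X \phi)$.

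The strategy is to produce from $Y$ a $T$-iterated provability class $X$ with $X\subseteq Y$ pointwise; once this is done, instantiating $[\lambda]^\prec_T\phi$ at $X$ gives $\langle \lambda,\phi\rangle\in X\subseteq Y$, hence $\provx \lambda Y \phi$. I define $X$ by $\prec$-recursion via the natural clause: $\langle \xi,\phi\rangle\in X$ iff $\Box_T\phi$, or there exist a formula $\psi(x)$ and $\zeta\prec\xi$ with $\forall n\,\langle\zeta,\psi(\dot n)\rangle\in X$ and $\Box_T(\forall x\,\psi(x)\to\phi)$. The defining equivalence is precisely $\Provfor_T(X)$ by construction, so only the inclusion $X\subseteq Y$ needs argument.

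For $X\subseteq Y$, I argue by transfinite induction on $\xi$ along $\prec$ that, for every $\phi$, $\langle \xi,\phi\rangle\in X \to \langle \xi,\phi\rangle\in Y$. In the $\Box_T\phi$ case, $T\subseteq T'$ gives $\Box_{T'}\phi$, and since $Y$ is a $T'$-IPC this yields $\langle \xi,\phi\rangle\in Y$. In the recursive case, with witness $\psi,\zeta\prec\xi$, the induction hypothesis applied to each $\psi(\overline n)$ at $\zeta$ gives $\forall n\,\langle \zeta,\psi(\dot n)\rangle\in Y$; combining with $\Box_{T'}(\forall x\,\psi(x)\to\phi)$ furnished by $T\subseteq T'$ yields $\rulefor{T'}{\xi}{\phi}{\prec}{Y}$, whence $\langle \xi,\phi\rangle\in Y$ again by $\Provfor_{T'}(Y)$. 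The formula being inducted is arithmetic in the parameters $X,Y$, so the required instance of ${\transin}(\prec,\cdot)$ is available via Lemma \ref{theorem:WellOrderingWithComprehensionImpliesTI}.

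The main obstacle is not the induction but the prior construction of $X$: the recursive clause is arithmetic in the set parameter $X_{\prec \xi}$, so defining $X$ calls for an instance of arithmetic transfinite recursion along $\prec$, which is stronger than the arithmetic comprehension available in bare \base. In the present setting this is acceptable, since ${\tt wo}(\prec)$ lets us invoke an instance of ${\tt TR}$-$\Pi^0_\omega$, and later sections of the paper develop the existence of $T$-IPCs in this style anyway. An alternative, if one wishes to minimise commitments on the base theory, is to carry out exactly the same construction relativised to $Y$ and then observe that the same transfinite induction shows the resulting object is simultaneously a $T$-IPC and contained in $Y$; the argument then goes through verbatim.
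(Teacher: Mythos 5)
Your transfinite induction establishing the pointwise inclusion of a $T$-iterated provability class in a $T'$-iterated provability class is precisely the content of the paper's own (one-line) proof: the paper argues that if both $\Provfor_T(X)$ and $\Provfor_{T'}(X')$ hold, then $\la\lambda,\phi\ra\in X\to\la\lambda,\phi\ra\in X'$ by an easy induction on $\lambda$, and your case analysis (the $\Box_T$ case via $T\subseteq T'$, the $\omega$-rule case via the induction hypothesis applied to the witnesses $\psi(\overline n)$ at level $\zeta\prec\xi$) is exactly how that induction goes; the induction formula is arithmetic in the set parameters, so Lemma \ref{theorem:WellOrderingWithComprehensionImpliesTI} indeed supplies the needed instance of transfinite induction.

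The genuine problem is the step you yourself flag as the main obstacle: manufacturing a $T$-IPC $X$ from the given $T'$-IPC $Y$ by transfinite recursion. Your justification --- that ``${\tt wo}(\prec)$ lets us invoke an instance of ${\tt TR}$-$\Pi^0_\omega$'' --- is not available: ${\tt wo}(\prec)$ is merely the \emph{antecedent} of the ${\tt TR}$ scheme, and ${\tt TR}$-$\Pi^0_\omega$ is the defining axiom of ${\rm ATR}_0$, which is strictly stronger than \base. The whole point of Section \ref{SecIntTheo} is that theories at the level of \base need \emph{not} prove $\exists X\,\Provfor_T(X)$; if this lemma required ${\rm ATR}_0$ it could not be used where the paper uses it (e.g.\ in the formalized Deduction Theorem over $U^\prec$ with $U$ merely extending \base). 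Your fallback of ``relativising the construction to $Y$'' is not worked out and does not obviously succeed: if you replace the recursive occurrence of $X_{\prec\xi}$ by $Y$ you lose the fixed-point property, since the witnesses $\la\zeta,\psi(\overline n)\ra$ are then only required to lie in $Y$ rather than in $X$ itself. The paper avoids constructing anything: it proves only the inclusion between whatever $X$ and $X'$ are given, and obtains the implication by instantiating the universal quantifier in $[\lambda]^\prec_T\phi$ at an arbitrary given $T$-IPC. You are responding to a real subtlety (the hypothesis $[\lambda]^\prec_T\phi$ is vacuous if no $T$-IPC exists), but the cure you propose exceeds the base theory stated in the lemma; you should drop the recursion step and present the argument as the inclusion alone.
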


\begin{proof}
Reason in $U + {\tt wo}(\prec)$ and assume that $T\subseteq T'$. By an easy induction on $\lambda$ it is shown that if both $\Provfor_T(X)$ and $\Provfor_{T'}(X')$, then  $\la \lambda, \phi \ra \in X \to \la \lambda, \phi \ra \in X'$.
\end{proof}

%%%%%BOOKMARK 

\section{Introspective theories}\label{SecIntTheo}

For a theory $T$ to be able to reason about non-trivial facts of iterated provability at all, it is necessary for it to at least ``believe'' that such a notion exists. For strong theories this is not an issue, but there is no reason to assume that ${\rm ACA}_0$ or any weaker theory is capable of proving that we have provability predicates. Hence we shall pay attention to those theories that do have them, and we shall call them {\em introspective theories.}

\begin{definition}[Introspective theory]
An arithmetic theory $T$ is {\em $\prec$-introspective} if $T\vdash\exists X\Provfor_T(X)$. 
\end{definition}

We defined provability operators by transfinite recursion, and as such it should be no surprise that ${\rm ATR}_0$ is introspective:

\begin{lemma}
Given an elementarily presented theory $T$,
\[{\rm ATR_0}\vdash {\tt wo}(\prec)\rightarrow\exists X\Provfor_T(\prec,X).\]

In particular, ${\rm ATR}_0$ is introspective.
\end{lemma}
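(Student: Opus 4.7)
The plan is to obtain $X$ by a single application of arithmetic transfinite recursion to a formula that directly transcribes the defining clause of $\Provfor_T$. Let $\theta(x, Y)$ be
\[
\Box_T x \ \vee\ \exists \psi\, \exists \zeta\ \Big( \forall n\ \langle \zeta, \psi(\dot n) \rangle \in Y\ \wedge\ \Box_T\big(\forall y\, \psi(y) \to x\big) \Big).
\]
Because $T$ is elementarily presented, $\Box_T$ is $\Sigma^0_1$, so $\theta$ is $\Sigma^0_2$ and mentions $Y$ only in atomic form, making it a legal instance for ${\tt TR}\text{-}\Pi^0_\omega$, a schema available in ${\rm ATR}_0$.

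Reasoning in ${\rm ATR}_0$ under the hypothesis ${\tt wo}(\prec)$, I would invoke ${\tt TR}\text{-}\Pi^0_\omega$ on $\theta$ to obtain a set $X$ satisfying $x \in X_\xi \leftrightarrow \theta(x, X_{\prec \xi})$ for all $\xi, x$. A subsequent $\Delta^0_0$-comprehension step trims $X$ down to codes of pairs $\langle \xi, x\rangle$ with $x \in X_\xi$; removing non-pair ``junk'' preserves every section $X_\xi$ and every filtration $X_{\prec \xi}$, so the recursion equation persists on the trimmed set.

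Verifying $\Provfor_T(X)$ is then a routine unfolding. For $z = \langle \xi, x\rangle$ one has $z \in X$ iff $x \in X_\xi$ iff $\theta(x, X_{\prec\xi})$. Since $\langle \zeta, \psi(\dot n)\rangle \in X_{\prec\xi}$ is by definition $\zeta \prec \xi \wedge \provx \zeta X \psi(\dot n)$, the second disjunct of $\theta(x, X_{\prec \xi})$ is precisely $\rulefor T \xi x \prec X$, while the first disjunct is $\Box_T x$. Non-pair $z$ lie outside $X$ after trimming, matching the right-hand side of $\Provfor_T$ vacuously. Existential generalization yields $\exists X\, \Provfor_T(X)$, and since $T$ proves ${\tt wo}(\prec)$ whenever $\prec$ is taken to be a standard recursive well-order, the ``in particular'' clause follows.

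I do not expect a genuine obstacle; the proof is essentially a translation between the schema ${\tt TR}\text{-}\Pi^0_\omega$ and the defining clause of $\Provfor_T$. The only points requiring care are (i) confirming $\theta$ has the correct syntactic shape — arithmetic, with $Y$ in atomic position — so the transfinite recursion schema legally applies, and (ii) aligning the bounded quantification $\exists \zeta \prec \xi$ hidden inside the ${\tt Rule}$ abbreviation with the filtration $X_{\prec \xi}$ produced by the recursion.
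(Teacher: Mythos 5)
Your overall strategy---a single application of ${\tt TR}$-$\Pi^0_\omega$ to the defining clause of $\Provfor_T$---is surely the intended one (the paper states this lemma without any proof), but as written your recursion does not define the right set, because of a mismatch between the indexing of your stages and the atomic queries inside $\theta$. Since $\theta(x,Y)$ begins with $\Box_T x$, each stage $X_\xi$ is a set of \emph{formula} codes, and by the paper's convention $X_{\prec\xi}=\{y\mid\exists\,\zeta{\prec}\xi\ \langle\zeta,y\rangle\in X\}$ is just the union $\bigcup_{\zeta\prec\xi}X_\zeta$ with the stage index forgotten. Hence the atom $\langle\zeta,\psi(\dot n)\rangle\in X_{\prec\xi}$ asks whether the \emph{number coding the pair} lies, qua formula code, in some earlier stage; it is not ``$\zeta\prec\xi$ and $\psi(\dot n)$ entered at stage $\zeta$'', contrary to your assertion that this holds ``by definition''. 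The point is not cosmetic: the ${\tt Rule}$ clause needs a \emph{single} $\zeta\prec\xi$ that works for all $n$ simultaneously, and the filtration as defined retains only membership in the union of earlier stages. The obvious repair---querying $\psi(\dot n)\in X_{\prec\xi}$ instead---lets the witnessing stage vary with $n$ and produces the diagonalized predicate $[\alpha]^{\prec,d}_T$ of Appendix~\ref{section:AlternativePresentations}, which the paper shows differs from $\prov\alpha{}$ from $\omega$ onward. So the slip changes which predicate is being defined.

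The fix is standard but has to be carried out: run the recursion on \emph{pairs}, e.g.\ take $\theta'(z,Y)$ to say ``$z=\langle\zeta,\sigma\rangle$ for some $\zeta,\sigma$ with $\Box_T\sigma$ or $\rulefor T \zeta \sigma \prec Y$'', so that the stages are cumulative and $X_{\prec\xi}$ genuinely consists of the already-certified pairs $\langle\zeta,\sigma\rangle$ with their indices intact. One must then verify---by a transfinite induction along $\prec$, available here via ${\tt wo}(\prec)$, arithmetic comprehension and Lemma~\ref{theorem:WellOrderingWithComprehensionImpliesTI}---that membership of $\langle\zeta,\sigma\rangle$ stabilises, and that the resulting set satisfies the fixed-point equation $\Provfor_T(X)$ with the \emph{full} set $X$ on the right-hand side rather than a filtration; this is where the bounded quantifier $\exists\,\zeta{\prec}\lambda$ inside ${\tt Rule}$ does the work. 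Your remaining points (the syntactic admissibility of the clause for ${\tt TR}$, the $\Delta^0_0$-comprehension trimming, and the ``in particular'' step) are unproblematic.
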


However, we wish to work over much weaker theories than ${\rm ATR}_0$, which may not be introspective. Our strategy will be to consider some sort of an ``introspective closure'', but do not wish for it to become much stronger than the original theory. Fortunately, this is not too difficult to achieve.

\begin{definition}
We define the {\em $\prec$-introspective closure} of $T$ as the theory $\overline T$ given by $T+ \exists X\Provfor_T(X)$.
\end{definition}

Below, we use the term ``G\"odelian'' somewhat informally as being susceptible to G\"odel's second incompleteness theorem; for example, it could be taken to mean sound, representable and extending ${\rm RCA}_0$.

\begin{lemma}\label{theorem:TheoryEquiConsistentWithIntrospectiveClosure}
$T$ is equiconsistent with $\overline T$, provided $T$ is G\"odelian and contains $\hat\Delta^0_0$ comprehension.
\end{lemma}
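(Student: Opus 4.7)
The direction $\mathrm{Con}(\overline T) \Rightarrow \mathrm{Con}(T)$ is immediate since $T \subseteq \overline T$, so the substance lies in the converse. Assuming $\mathrm{Con}(T)$, I fix a model $\mathcal M = (M, S_M)$ of $T$ and plan to produce a set $X^* \subseteq M$ so that the expansion $\mathcal M^* = (M, S_M \cup \{X^*\})$ satisfies $\overline T$, which will witness $\mathrm{Con}(\overline T)$.

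The set $X^*$ is obtained externally as a fixed point. Observe that $X$ occurs only positively in the right-hand side of the defining equivalence for $\Provfor_T(X)$: the sole occurrence is the atom $\langle \xi, \psi(\dot n)\rangle \in X$ inside $\rulefor T \lambda \phi \prec X$. Hence the metatheoretic operator
\[
F(Y) \ = \ \{ \langle \lambda, \phi\rangle \in M \mid \mathcal M \models \Box_T \phi \vee \rulefor T \lambda \phi \prec Y \}
\]
is monotone on $\mathcal P(M)$, and by Knaster--Tarski it admits a least fixed point $X^*$. The equation $F(X^*) = X^*$ is precisely the assertion $\mathcal M^* \models \Provfor_T(X^*)$, so $\mathcal M^* \models \exists X\, \Provfor_T(X)$.

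Verifying that $\mathcal M^*$ still models the axioms of $T$ is where the hypothesis of $\hat\Delta^0_0$ comprehension becomes decisive. Since $\hat\Delta^0_0$ formulas carry no free set variables, the schema $\hat\Delta^0_0$-$\compax$ generates no additional instances when $X^*$ is adjoined to the second-order universe; the Robinson arithmetic base and the other first-order axioms supplied by ``G\"odelian'' are not affected. Consequently $\mathcal M^*$ is a model of $\overline T$, establishing consistency.

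The main obstacle is ensuring that the externally constructed $X^*$ genuinely witnesses $\Provfor_T$ \emph{inside} $\mathcal M$, even when $\mathcal M$ is non-standard and $\prec^M$ is not externally well-founded. A naive level-by-level recursion along $\prec^M$ is unavailable in this case, but the monotone-operator route sidesteps this, since Knaster--Tarski requires no well-foundedness hypothesis. A secondary subtlety is that any induction axioms of $T$ with free set parameters could in principle fail after adding $X^*$; this can be handled by choosing $\mathcal M$ sufficiently saturated, or equivalently by iteratively closing the second-order universe under $F$ together with the relevant induction instances before applying the fixed-point construction.
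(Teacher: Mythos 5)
Your first direction is fine, but the converse has a genuine gap, and it is exactly the issue you dismiss as a ``secondary subtlety.'' The theory $T$ is an arbitrary G\"odelian theory, which in this paper means (at least) a representable sound extension of ${\rm RCA}_0$; its axiom schemas --- $\Sigma^0_1$ induction and $\Delta^0_0$ comprehension \emph{with set parameters} --- acquire new instances the moment you adjoin $X^*$ to the second-order universe. An externally constructed Knaster--Tarski least fixed point in a non-standard model $\mathcal M$ has no reason to satisfy induction as a parameter, and closing under comprehension relative to $X^*$ only propagates the problem to further sets. The proposed remedies are not arguments: ``sufficiently saturated'' does not identify a property that guarantees the expansion models $T$, and ``iteratively closing under $F$ together with the relevant induction instances'' is precisely the construction whose convergence and soundness would need to be proved. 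Since $T$ may moreover contain arbitrary $\Pi^1_1$ axioms (universally quantified over sets), which can simply become false upon enlarging the second-order part, the model-theoretic expansion strategy cannot work at this level of generality.

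The paper avoids model theory altogether with a short proof-theoretic trick. By G\"odel's Second Incompleteness Theorem, ${\tt Con}(T)$ implies ${\tt Con}(T+\Box_T\bot)$. Reasoning inside $T+\Box_T\bot$, one has $\Box_T\phi$ for \emph{every} formula $\phi$, so every pair $\langle\lambda,\phi\rangle$ satisfies the first disjunct of the defining equivalence of $\Provfor_T$; hence the set of \emph{all} pairs $\langle\lambda,\phi\rangle$ is an iterated provability class, and it exists by $\hat\Delta^0_0$ comprehension (no parameters needed, and no recursion along $\prec$ either). Thus $T+\Box_T\bot\vdash\exists X\,\Provfor_T(X)$, i.e.\ $T+\Box_T\bot\supseteq\overline T$, and ${\tt Con}(T)\Rightarrow{\tt Con}(T+\Box_T\bot)\Rightarrow{\tt Con}(\overline T)$. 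Note how this explains the role of the ``G\"odelian'' hypothesis, which in your proposal does no real work: it is there precisely to license the passage to $T+\Box_T\bot$. If you want to salvage a fixed-point construction, it would have to be carried out \emph{internally} (which is essentially what ${\rm ATR}_0$ does, and why only that much stronger theory is outright introspective).
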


\proof
Clearly the consistency of $\overline{T}$ implies the consistency of $T$. For the other direction we use that if $T$ is G\"odelian, then $T$ is equiconsistent with $T':=T+\nc_T\bot$. We claim that $T'\vdash \exists X\Provfor_T(X)$ so that $T'\supseteq \overline T$ whence 
\[
\begin{array}{lll}
{\tt Con}(T) & \Rightarrow &{\tt Con}(T') \\
 & \Rightarrow &{\tt Con}(\overline T). \\
\end{array}
\]

Indeed, reasoning within $T$, if $T$ were inconsistent, then $\nc_T\phi$ for every formula $\phi$. It follows that if $X$ is an iterated provability operator, then $\provx\lambda X \phi$ for all $\lambda$ and $\phi$; hence the trivial set consisting of all pairs $\langle\lambda,\phi\rangle$ is an iterated provability operator, and by $\hat\Delta^0_0$ comprehension, it forms a set.
\endproof

There is still a danger of sliding down a slippery-slope, where $\overline T$ is itself not introspective, thus needing to generate a sequence of theories that is each ``introspective over the previous''. Fortunately, this is not the case. In order to show this we need a technical lemma reminiscent of the Deduction Theorem. 

\begin{definition}
Let $X$ be an iterated provability operator, so that $\Provfor_T (X)$ holds. We define the set {\em $X$ given $\theta$} --which we denote by $\{X|\theta\}$-- as 
\[
\la \lambda, \phi \ra \in \{X|\theta\} \ :\Longleftrightarrow \ \la \lambda, \theta \to \phi \ra \in X .
\]
\end{definition}

The technical lemma that we shall now prove tells us in particular that introspection is preserved under taking finite extensions.

\begin{lemma}\label{theorem:ConditionalProvPreds}
Let $U$ be some theory containing $\Delta^0_0$ comprehension, and let $T$ be representable. Then
\[
U\vdash \Provfor_T(Y) \to \Provfor_{T+\theta}(\{Y|\theta\}).
\]
\end{lemma}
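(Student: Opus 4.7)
The plan is to prove the claim by unfolding both sides of $\Provfor_{T+\theta}(\{Y\mid\theta\})$ under the hypothesis $\Provfor_T(Y)$ inside $U$, and to recognize that what we must show is, in essence, the formalized deduction theorem for the $\omega$-rule calculus: ``$\phi$ is $\xi$-provable in $T+\theta$'' should be interderivable with ``$\theta\to\phi$ is $\xi$-provable in $T$''. Expanding the definitions on the right-hand side, the target is the equivalence
\[
\langle\xi,\theta\to\phi\rangle\in Y \ \leftrightarrow\ \Box_{T+\theta}\phi\ \vee\ \exists\psi\,\exists\zeta\prec\xi\bigl(\forall n\,\langle\zeta,\theta\to\psi(\dot n)\rangle\in Y\ \wedge\ \Box_{T+\theta}(\forall x\,\psi(x)\to\phi)\bigr)
\]
for all $\xi,\phi$, and we analyze it via the defining biconditional of $\Provfor_T(Y)$ applied at the particular level $\xi$. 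Only this single instance is used, so no induction along $\prec$ is required.

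Two auxiliary facts should be dispatched first. The first is the standard formalized deduction theorem $U\vdash\Box_{T+\theta}\chi\leftrightarrow\Box_T(\theta\to\chi)$, available for any sentence $\theta$. The second is a one-level closure lemma: assuming $\Provfor_T(Y)$, if $\langle\zeta,\chi\rangle\in Y$ and $\Box_T(\chi\to\chi')$ then $\langle\zeta,\chi'\rangle\in Y$. This follows immediately from the defining disjunction of $\Provfor_T(Y)$ at $\zeta$: if $\Box_T\chi$ then $\Box_T\chi'$, while if some rule-witness $(\psi'',\zeta')$ puts $\langle\zeta,\chi\rangle$ into $Y$, the same witnesses work for $\chi'$ after composing $\Box_T(\forall x\,\psi''(x)\to\chi)$ with $\Box_T(\chi\to\chi')$.

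For the forward direction, assume $\langle\xi,\theta\to\phi\rangle\in Y$. Applying $\Provfor_T(Y)$ at $\xi$, either $\Box_T(\theta\to\phi)$, which yields $\Box_{T+\theta}\phi$ by deduction, or there exist $\psi',\zeta\prec\xi$ with $\forall n\,\langle\zeta,\psi'(\dot n)\rangle\in Y$ and $\Box_T(\forall x\,\psi'(x)\to(\theta\to\phi))$. In the second case take $\psi:=\psi'$: from the $T$-tautology $\psi'(\dot n)\to(\theta\to\psi'(\dot n))$ and the closure lemma one obtains $\langle\zeta,\theta\to\psi'(\dot n)\rangle\in Y$, and the required $\Box_{T+\theta}(\forall x\,\psi'(x)\to\phi)$ is precisely the deduction translate of the hypothesis. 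For the backward direction, the $\Box_{T+\theta}\phi$ case reduces to $\Box_T(\theta\to\phi)$ and hence to $\langle\xi,\theta\to\phi\rangle\in Y$ via $\Provfor_T(Y)$. In the nontrivial case, given rule-witnesses $(\psi,\zeta)$, define a new witness by $\psi'(x):=\theta\to\psi(x)$; then $\langle\zeta,\psi'(\dot n)\rangle=\langle\zeta,\theta\to\psi(\dot n)\rangle\in Y$ by hypothesis, and $\Box_T(\forall x\,\psi'(x)\to(\theta\to\phi))$ follows from $\Box_T(\theta\to(\forall x\,\psi(x)\to\phi))$ by a purely propositional manipulation inside $T$, using that $\forall x(\theta\to\psi(x))$ together with $\theta$ entails $\forall x\,\psi(x)$.

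The main obstacle is coding discipline rather than logical content. Two points deserve care: in the backward direction the existential over codes of formulas must be witnessed by an explicit object-level term $\theta\to\psi(\cdot)$ built using the implication and substitution term-constructors from Section \ref{SecSOA}, with $\theta$ taken as a sentence (or $x$ renamed away) so that $\psi'(x)$ has exactly $x$ free; and one must verify that numeral substitution commutes with the implication term, which is built into the primitive-recursive setup of our G\"odel numbering. The $\Delta^0_0$ comprehension available in $U$ is more than enough to form the set $\{Y\mid\theta\}$ from $Y$ as a parameter, and no further ingredient is needed.
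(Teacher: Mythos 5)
Your proof is correct and follows essentially the same route as the paper: unfold both defining biconditionals, use the formalized deduction theorem $\Box_{T+\theta}\chi\leftrightarrow\Box_T(\theta\to\chi)$, and reduce the rule clause to the tautology $(\theta\to(\forall x\,\psi(x)\to\phi))\leftrightarrow(\forall x\,(\theta\to\psi(x))\to(\theta\to\phi))$ together with the observation that $Y$-membership at a fixed level is closed under $\Box_T$-provable consequence. Your explicit one-level closure lemma is just a clean justification of a step the paper dismisses as ``easy to establish by elementary means,'' so no substantive difference.
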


\CurrentProof{
\proof
We reason in $U$ and assume $\Provfor_T(Y)$. By $\Delta^0_0$ comprehension we see that $\{Y|\theta\}$ is a set. We need to show that 
$\provx \lambda {\{Y|\theta\}} \phi \leftrightarrow \Box_{T+\theta} \phi \vee \rulefor {T+\theta} {\lambda} \phi \prec {\{ Y|\theta\}}$. Since
\[
\begin{array}{lll}
\provx \lambda {\{ Y|\theta\}} \phi & \leftrightarrow & \provx \lambda Y (\theta \to \phi)\\
 & \leftrightarrow & \Box_T (\theta \to \phi) \vee \rulefor {T} {\lambda} {\theta \to \phi} \prec {Y},
\end{array}
\]
and since $\Box_{T+\theta} \phi \leftrightarrow \Box_T (\theta \to \phi)$, it suffices to show $\rulefor {T+\theta} {\lambda} \phi \prec {\{ Y|\theta\}}\  \leftrightarrow \ \rulefor {T} {\lambda} {\theta \to \phi} \prec {Y}$.

But this follows easily from the tautology
\[\big( \theta \to (\forall x \psi (x) \to \phi)\big) \ \leftrightarrow \ \big(  \forall x\ (\theta \to \psi (x)) \to ( \theta \to \phi )\big) 
\]
and the definition of $\{Y|\theta\}$.
\endproof
}

\AlternativeProof{
\begin{proof}
We reason in $U$ and assume $\Provfor_T(Y)$. We need to show that 
$\provx \lambda {\{Y|\theta\}} \phi \leftrightarrow \Box_{T+\theta} \phi \vee \rulefor {T+\theta} {\lambda} \phi \prec {\{ Y|\theta\}}$. Since
\[
\begin{array}{lll}
\provx \lambda {\{ Y|\theta\}} \phi & \leftrightarrow & \provx \lambda Y (\theta \to \phi)\\
 & \leftrightarrow & \Box_T (\theta \to \phi) \vee \rulefor {T} {\lambda} {\theta \to \phi} \prec {Y},
\end{array}
\]
and since $\Box_{T+\theta} \phi \leftrightarrow \Box_T (\theta \to \phi)$, it suffices to show $\rulefor {T+\theta} {\lambda} \phi \prec {\{ Y|\theta\}}\  \leftrightarrow \ \rulefor {T} {\lambda} {\theta \to \phi} \prec {Y}$. We use the tautology 
\begin{equation}\label{equation:CombinatorAxiom}
\big( \theta \to (\forall x \psi (x) \to \phi)\big) \ \leftrightarrow \ \big(  \forall x\ (\theta \to \psi (x)) \to ( \theta \to \phi )\big) 
\end{equation}
to obtain
\[
\begin{array}{lr}
\rulefor {T+\theta} {\lambda} \phi \prec {\{ Y|\theta\}}\  & \leftrightarrow \\
\exists \psi \exists \, \xi {\prec} \lambda\ \big(\forall n \provx \xi {\{ Y|\theta\}}\psi(\overline n)\wedge \Box_{T+\theta} (\forall x \psi(x) \to \phi) \big)
& \leftrightarrow \\
\exists \psi \exists \, \xi {\prec} \lambda\ \big(\forall n \provx \xi {Y}(\theta \to \psi(\overline n))\wedge \Box_{T} (\theta\to(\forall x \psi(x) \to \phi) \big))
& \leftrightarrow \\
\exists \psi \exists \, \xi {\prec} \lambda\ \big(\forall n \provx \xi {Y}(\theta \to \psi(\overline n))\wedge \Box_{T} (\forall x\ (\theta \to \psi (x)) \to ( \theta \to \phi )) \big)).\\
\end{array}
\]
Clearly, this latter line implies $\rulefor T \lambda {\theta \to \phi} \prec Y$. For the other direction we observe that
\[
\begin{array}{lr}
\rulefor T \lambda {\theta \to \phi} \prec Y  & \leftrightarrow \\
\exists \psi \exists \, \xi {\prec} \lambda\ \big(\forall n \provx \xi { Y}\psi(\overline n)\wedge \Box_{T} (\forall x \psi(x) \to (\theta \to \phi)) \big)
& \rightarrow \\
\exists \psi \exists \, \xi {\prec} \lambda\ \big(\forall n \provx \xi { Y}(\theta \to \psi(\overline n)) \wedge \Box_{T} (\theta \to (\forall x \psi(x) \to \phi)) \big) 
& \rightarrow \\
\rulefor {T+\theta} {\lambda} \phi \prec {\{ Y|\theta\}}. 
\end{array}
\]
Here we have used that $\provx \xi Y \psi (\overline n) \to \provx \xi Y(\theta\to \psi (\overline n))$ for every $\theta$, which is easy to establish by elementary means.
\end{proof}
}
As a direct consequence of this lemma we see that the introspective closure of a theory is indeed itself introspective.

\begin{lemma}
Using $\Delta^0_0$ comprehension one can show that $\overline T$ is introspective.
\end{lemma}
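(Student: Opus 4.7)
The plan is to derive this directly from Lemma \ref{theorem:ConditionalProvPreds} by instantiating $\theta$ with the very sentence that distinguishes $\overline T$ from $T$. Write $\theta:=\exists X\,\Provfor_T(X)$, so that $\overline T = T+\theta$. Then the goal $\exists X\,\Provfor_{\overline T}(X)$ is simply $\exists X\,\Provfor_{T+\theta}(X)$, and Lemma \ref{theorem:ConditionalProvPreds} produces such a witness from any iterated provability class for $T$.

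More concretely, I would reason inside $\overline T$ as follows. First, by definition of $\overline T$, the hypothesis $\theta$ holds, so we may pick some $Y$ with $\Provfor_T(Y)$. Since $\overline T$ contains $\Delta^0_0$ comprehension (inherited from $T$), the class $\{Y\mid\theta\}$ exists as a set — indeed, its defining condition $\langle\lambda,\phi\rangle\in\{Y\mid\theta\}\leftrightarrow\langle\lambda,\theta\to\phi\rangle\in Y$ is $\Delta^0_0$ in the parameter $Y$. Now apply Lemma \ref{theorem:ConditionalProvPreds} with $U=\overline T$ (which verifies the hypotheses, since it contains $\Delta^0_0$ comprehension and $T$ is representable): from $\Provfor_T(Y)$ we conclude $\Provfor_{T+\theta}(\{Y\mid\theta\})$, i.e., $\Provfor_{\overline T}(\{Y\mid\theta\})$. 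Hence $\exists X\,\Provfor_{\overline T}(X)$ holds, which is exactly what it means for $\overline T$ to be $\prec$-introspective.

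There is essentially no obstacle, as the substantive work has already been carried out in Lemma \ref{theorem:ConditionalProvPreds}; the only thing to check is that the ambient base theory is strong enough to form the set $\{Y\mid\theta\}$ and to invoke that lemma, both of which follow from the assumed presence of $\Delta^0_0$ comprehension in $T$ (and hence in $\overline T$).
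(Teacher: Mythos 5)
Your proof is correct and follows exactly the paper's own argument: instantiate $\theta=\exists X\,\Provfor_T(X)$ in Lemma \ref{theorem:ConditionalProvPreds} to obtain $\Provfor_{\overline T}(\{Y|\theta\})$ from any $Y$ with $\Provfor_T(Y)$, using $\Delta^0_0$ comprehension to form the set $\{Y|\theta\}$. Nothing to add.
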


\proof
By the above Lemma \ref{theorem:ConditionalProvPreds}, if $Y$ is a provability predicate for $T$, then $\{ Y|\exists X \Provfor_T(X)\}$
is a provability predicate for $\overline T$. Moreover, by $\Delta^0_0$ comprehension, it forms a set.
\endproof

\noindent
We conclude that working with introspective theories is not too restrictive:

\begin{corollary}\label{theorem:eachTheoryIsEquiConsistentToIntrospective}
Every G\"odelian theory $T$ is equiconsistent to an introspective theory $\overline T$.
\end{corollary}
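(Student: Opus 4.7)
The plan is to take $\overline T$ itself as the witnessing introspective theory and simply assemble the two preceding lemmas. Concretely, the proof will have two lines: equiconsistency is given by Lemma \ref{theorem:TheoryEquiConsistentWithIntrospectiveClosure}, and introspection of $\overline T$ is given by the immediately preceding lemma.

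First I would unpack the hypothesis ``G\"odelian'', which per the convention set in the paper entails that $T$ is sound, representable, and extends ${\rm RCA}_0$. The latter in particular provides $\Delta^0_0$-comprehension (indeed $\hat\Delta^0_0$-comprehension as a special case), so both Lemma \ref{theorem:TheoryEquiConsistentWithIntrospectiveClosure} and the previous lemma apply to $T$ without any additional assumption.

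Next I would invoke Lemma \ref{theorem:TheoryEquiConsistentWithIntrospectiveClosure} to conclude ${\tt Con}(T) \leftrightarrow {\tt Con}(\overline T)$, and then the lemma preceding the corollary to conclude $\overline T \vdash \exists X\, \Provfor_T(X)$, i.e.\ that $\overline T$ is $\prec$-introspective in the sense of Definition~5.1. Combining these two facts yields exactly the desired statement: the theory $\overline T$ is introspective and equiconsistent with $T$.

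There is no real obstacle here; the corollary is essentially a packaging of the two lemmas just proved, and the only point worth flagging is to verify that the comprehension assumption required by each lemma is subsumed by the informal notion ``G\"odelian''. If one were uneasy about the informal nature of that notion, the corollary could equivalently be stated as: any sound representable theory extending ${\rm RCA}_0$ is equiconsistent with an introspective extension of itself, namely $\overline T$.
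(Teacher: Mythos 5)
Your proof is correct and matches the paper's intent exactly: the corollary is stated there without proof precisely because it is the immediate combination of Lemma \ref{theorem:TheoryEquiConsistentWithIntrospectiveClosure} (equiconsistency of $T$ and $\overline T$) and the preceding lemma (introspectivity of $\overline T$ from $\Delta^0_0$ comprehension). Your care in checking that ``G\"odelian'' supplies the comprehension hypotheses needed by both lemmas is a sensible touch but introduces nothing beyond the paper's own route.
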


Note that in general we may not expect for $U$ to prove $\exists X\Provfor_{T+\theta}(X) \to \exists X\Provfor_T(X)$ since for $\theta = \nc_T \bot$ the antecedent always holds (recall the proof of Lemma \ref{theorem:TheoryEquiConsistentWithIntrospectiveClosure}).

A problem that $\overline T$ has is that it introduces an existential second-order formula, which may make it hard later to control the complexity of the resulting theory. Because of this, it is sometimes more convenient to work with {\em explicitly introspective} theories:

\begin{definition}
Given a formal theory $T$, we define $T^\pi$ over the language $\alang+\{\pi\}$, where ${\pi}$ is a new set-constant and $T^\pi=T+\Provfor_T(\pi)$.
\end{definition}

Much as with $\overline T$, $T^\pi$ is introspective, provided $T$ contains $\Delta^0_0$ comprehension. In fact, we can do a bit better in this case. Recall that, given a class of formulas $\Gamma$, $\hat\Gamma$ denotes the set of those formulas of $\Gamma$ with no open set variables; excluding $\pi$, of course, which is a constant. Then we have the following:

\begin{lemma}\label{theorem:ExtendingLanguageNeedsOnlyParameterFreeComprehensionToShowIntrospectivity}
Using $\hat\Delta^0_0$ comprehension one can prove that $T^\pi$ is introspective.
\end{lemma}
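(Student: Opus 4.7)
The plan is to mimic the proof that $\overline T$ is introspective (given just above), but exploit the fact that $\pi$ is a set \emph{constant} rather than a set \emph{variable}. Work inside $T^\pi$. Among the axioms of $T^\pi$ we have the sentence $\theta := \Provfor_T(\pi)$, and note that $\theta$ is closed and contains no free set variables (it mentions only the constant $\pi$). So $\pi$ witnesses, by hypothesis, that there exists an iterated provability predicate for $T$.

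Next, I would apply Lemma \ref{theorem:ConditionalProvPreds} with $Y := \pi$ and the particular sentence $\theta = \Provfor_T(\pi)$. This yields $\Provfor_{T+\theta}(\{\pi\,|\,\theta\})$, and since $T+\theta = T^\pi$ by construction, this is $\Provfor_{T^\pi}(\{\pi\,|\,\theta\})$. The only thing I then need to verify is that $\{\pi\,|\,\theta\}$ actually forms a set in $T^\pi$; by definition its membership relation is
\[
\langle \lambda,\phi\rangle \in \{\pi\,|\,\theta\} \ \Longleftrightarrow\ \langle \lambda, \theta\to\phi\rangle \in \pi,
\]
where on the right-hand side $\theta\to\phi$ is to be read as the term that produces the code of this implication. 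Since $\theta$ is a fixed sentence and $\pi$ is a constant (not a free set variable), this defining formula is $\hat\Delta^0_0$ in the free number variables $\lambda,\phi$. Hence $\hat\Delta^0_0$ comprehension suffices to produce $\{\pi\,|\,\theta\}$ as a set, which completes the argument.

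The main thing to check is simply that the invocation of Lemma \ref{theorem:ConditionalProvPreds} is legitimate when the ``base'' provability class is the constant $\pi$ rather than a generic set $Y$; inspecting the proof of that lemma shows that the only appeal to comprehension was to cut $\{Y\,|\,\theta\}$ out of $Y$, and all the remaining manipulations are propositional reasoning about codes together with the defining equivalence for $\Provfor_T$. Thus the whole argument transfers verbatim, with the single saving that constants, unlike free set variables, do not push the comprehension formula outside of $\hat\Delta^0_0$. There is no genuine obstacle here; the content of the lemma is the observation that tying the witness into a new constant symbol rather than an existentially quantified variable is exactly what lets us get by with parameter-free comprehension.
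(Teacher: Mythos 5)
Your argument is correct and is essentially the proof the paper intends (the paper omits it, saying only that it ``proceeds as before''): apply Lemma \ref{theorem:ConditionalProvPreds} with $Y=\pi$ and $\theta=\Provfor_T(\pi)$, and observe that since $\pi$ is a constant the comprehension instance defining $\{\pi\,|\,\theta\}$ is parameter-free, hence $\hat\Delta^0_0$. Your closing remark correctly isolates the one point that distinguishes this lemma from the $\overline T$ case.
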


The proof proceeds as before and we omit it. Parameter-free comprehension is very convenient in that it does not ``blow up'', as it cannot be iterated; for example, $\Pi^0_1$ comprehension {\em with} set parameters is equivalent to full arithmetic comprehension, but $\hat\Pi^0_1$ comprehension is not.

Now that we have shown that introspective theories are not such a bad thing to work with, we will employ them freely in the next sections. Introspective theories are capable of reasoning about their own iterated provability; for example, we may prove the desired recursion as stated in \eqref{PendejoUseShortLabels}.

\begin{lemma}\label{theorem:RecursiveEquationProved}
Let $T$ be a theory that extends \base. Then, we have that 
\begin{enumerate}
\item\label{theorem:item:NoIntrospectivityNeeded}
$T \vdash  \Big( \Box_T \phi \vee \exists \, \psi\, \exists\, \xi{\prec} \lambda \ \big(\forall n \ \prov\xi\psi(\dot{n}) \ \wedge \ \Box_T (\forall x \psi (x) \to \phi) \big) \Big)
 \ \to \ \prov\lambda\phi$;

\item
$\overline T\vdash  \prov\lambda\phi \ \leftrightarrow \Big( \Box_T \phi \vee \exists \, \psi\, \exists\, \xi{\prec} \lambda \ \big(\forall n \ \prov\xi\psi(\dot{n}) \ \wedge \ \Box_T (\forall x \psi (x) \to \phi) \big) \Big).
$
\end{enumerate}
\end{lemma}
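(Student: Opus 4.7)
For part (1), I plan to reason entirely inside $T$ without invoking introspectiveness. Fix an arbitrary $X$ such that $\Provfor_T(X)$ and assume the right-hand disjunction. The defining clause for $\Provfor_T(X)$ says precisely that $\langle\lambda,\phi\rangle\in X \leftrightarrow \Box_T\phi \vee \rulefor T\lambda\phi\prec X$, so it suffices to show that the right disjunct in the hypothesis implies $\Box_T\phi \vee \rulefor T\lambda\phi\prec X$. If $\Box_T\phi$ this is immediate. Otherwise we have witnesses $\psi$ and $\xi\prec\lambda$ with $\forall n\,\prov\xi\psi(\dot n)$ and $\Box_T(\forall x\,\psi(x)\to\phi)$; instantiating the universal quantifier over iterated provability classes implicit in each $\prov\xi\psi(\dot n)$ with the fixed $X$, we obtain $\provx\xi X\psi(\dot n)$ for all $n$, which together with $\Box_T(\forall x\,\psi(x)\to\phi)$ is exactly $\rulefor T\lambda\phi\prec X$. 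Hence $\provx\lambda X\phi$, and since $X$ was arbitrary, $\prov\lambda\phi$.

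For part (2), the $(\leftarrow)$ implication is a direct consequence of part (1) relativised to $\overline T\supseteq T$, so it remains to establish $(\to)$. Reasoning in $\overline T$, pick some $Y$ with $\Provfor_T(Y)$, and assume $\prov\lambda\phi$. Instantiating this universal statement at $Y$ gives $\provx\lambda Y\phi$, and the defining clause of $\Provfor_T(Y)$ then yields the dichotomy $\Box_T\phi \vee \rulefor T\lambda\phi\prec Y$. The first disjunct gives the right-hand side immediately. The second disjunct produces witnesses $\psi,\xi\prec\lambda$ with $\forall n\,\provx\xi Y\psi(\dot n)$ and $\Box_T(\forall x\,\psi(x)\to\phi)$, and the task reduces to upgrading the $Y$-indexed statement $\provx\xi Y\psi(\dot n)$ to the fully universal $\prov\xi\psi(\dot n)=\forall X'(\Provfor_T(X')\to \provx\xi X'\psi(\dot n))$.

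For this upgrade I plan to invoke Lemma \ref{provunique}, which, under the standing assumption ${\tt wo}(\prec)$ that pervades the paper, guarantees that any $X'$ with $\Provfor_T(X')$ agrees pointwise with $Y$; in particular $\langle\xi,\psi(\dot n)\rangle\in X'$ for every such $X'$, so $\prov\xi\psi(\dot n)$ holds for each $n$. The same pair $(\psi,\xi)$ also witnesses $\Box_T(\forall x\,\psi(x)\to\phi)$, so the right-hand disjunction is secured.

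The principal obstacle is precisely this upgrade step: extracting witnesses from the $Y$-indexed rule application and promoting them to witnesses for the universally quantified $\prov\xi$. Without well-foundedness of $\prec$ distinct provability classes could disagree on $\langle\xi,\psi(\dot n)\rangle$, and the universal statement $\prov\xi\psi(\dot n)$ would not follow from membership in the particular $Y$ we have chosen. The well-ordering hypothesis — carried throughout the paper — is what powers the transfinite induction in Lemma \ref{provunique} and thereby makes the conversion possible; everything else in the proof is a mechanical unpacking of the defining equivalence for $\Provfor_T$.
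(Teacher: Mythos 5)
Your argument follows the same route as the paper's own proof. Part (1) is identical: fix $X$ with $\Provfor_T(X)$, instantiate the universal quantifier hidden in each $\prov\xi{}\psi(\dot{n})$ at that $X$, and read off $\provx\lambda X\phi$ from the defining equivalence; the $(\leftarrow)$ half of part (2) likewise follows from part (1) since $\overline T$ extends $T$. Where you genuinely add something is the forward direction of part (2): you correctly observe that unfolding $\provx\lambda Y\phi$ for the introspection witness $Y$ produces $\psi$ and $\xi$ that a priori certify only $\forall n\,\provx\xi Y{\psi(\dot{n})}$, and that promoting this to the universally quantified $\forall n\,\prov\xi{}{\psi(\dot{n})}$ requires all iterated provability classes to agree, which you supply via Lemma \ref{provunique}. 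The paper's proof derives the statement in the form $\forall X\exists\psi\exists\xi(\dots)$ and then says the claim ``directly follows'' (handling $\lambda=0$ separately through Lemma \ref{zerolemm}), leaving this quantifier swap implicit; your version makes it explicit, which is the more careful reading. The one caveat is that Lemma \ref{provunique} is only available under ${\tt wo}(\prec)$, which is not among the hypotheses of the lemma as stated (it claims $\overline T\vdash\dots$, not $\overline T+{\tt wo}(\prec)\vdash\dots$), so strictly speaking your argument proves the equivalence over $\overline T+{\tt wo}(\prec)$. Since the lemma is only ever invoked in the paper in contexts where ${\tt wo}(\prec)$ is assumed, this is harmless in practice, but you should flag the extra hypothesis rather than calling it a standing assumption.
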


\begin{proof}
In the first item, we reason in $T$ and need to prove $\prov \lambda \phi$ under the assumption of the antecedent. To this end, we fix some $X$ with $\Provfor_T(X)$ and show $\provx \lambda X \phi$. However, this follows directly from the definition of $X$ being a provability predicate since we can replace $\forall n \ \prov\xi\psi(\dot{n})$ in the antecedent by $\forall n \ \provx \xi X \psi(\dot {n})$.

For the remaining implication in the second item we reason as follows. In case $\lambda =0$ we get the implication by Lemma \ref{zerolemm}. In case $\lambda > 0$, from the definition we see that for any provability predicate $X$ we have 
\[
\provx \lambda X \phi \ \to \Big( \Box_T \phi \vee \exists \, \psi\, \exists\, \xi{\prec} \lambda \ \big(\forall n \ \provx \xi X\psi(\dot{n}) \ \wedge \ \Box_T (\forall x \psi (x) \to \phi) \big) \Big).
\]
From this we obtain
\[
\begin{array}{ll}
\prov \lambda \phi  \to & \forall X\Big[\Provfor_T(X) \to \\
&\Big( \Box_T \phi \vee \exists \psi \exists \xi{\prec} \lambda  \big(\forall n \ \provx \xi X\psi(\dot{n}) \wedge \Box_T (\forall x \psi (x) \to \phi) \big) \Big)\Big],
\end{array}
\]
from which the claim directly follows. 
\end{proof}

\section{Soundness}\label{secsound}

In this section we shall see that indeed ${\sf GLP}_\prec$ is sound for its arithmetic interpretation. In Lemma \ref{theorem:monotonicity} we have already seen the soundness of the monotonicity axiom $[\xi] \phi \to [\zeta] \phi$ for $\xi\prec \zeta$. For the remaining axioms we will transfinite induction over $\prec$ so we define, given a second-order theory $T$, a new theory $T^\prec$ as
\[
T^\prec :=T+{\tt wo}(\prec).
\]
We will assume that $T$ contains \base, although in Appendix \ref{AppBaseTheory} we shall discuss this choice. 
Since introspection is closed under taking finite extensions both $\overline{T}^\prec$ and $\overline{T^\prec}$ are introspective (though not necessarily equivalent); for all our arguments below it is irrelevant which one we use.

Let us first check the soundness of the basic distribution axiom.

\begin{lemma}\label{kaxiom} Given theories $U,T$ where $U$ extends \base and $T$ is representable, then
\[
U^\prec\vdash [\lambda]^\prec_T(\phi_1\to\phi_2)\to(\prov\lambda\phi_1 \to\prov\lambda\phi_2).
\]
\end{lemma}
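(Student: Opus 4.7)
My plan is to reason inside $U^\prec$. Unfolding $[\lambda]^\prec_T\phi$ as $\forall X(\Provfor_T(X)\to\provx\lambda X\phi)$, it suffices to fix an arbitrary $X$ satisfying $\Provfor_T(X)$ and to establish, by transfinite induction on $\lambda$, the internal distribution principle
\[
\psi_X(\lambda)\;:=\;\forall\phi_1,\phi_2\;\big(\provx\lambda X(\phi_1\to\phi_2)\wedge\provx\lambda X\phi_1\to\provx\lambda X\phi_2\big).
\]
Since $\provx\lambda X(\cdot)$ is $\Delta^0_0$, the formula $\psi_X$ is $\Pi^0_1$ in the parameter $X$, so the set $\{\lambda:\neg\psi_X(\lambda)\}$ exists by arithmetic comprehension (present in \base), and Lemma \ref{theorem:WellOrderingWithComprehensionImpliesTI} together with ${\tt wo}(\prec)$ licenses the induction.

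For the induction step, assume $\psi_X(\zeta)$ for every $\zeta\prec\lambda$, together with $\provx\lambda X(\phi_1\to\phi_2)$ and $\provx\lambda X\phi_1$. By the defining equivalence of $\Provfor_T(X)$, each hypothesis is witnessed either by $\Box_T$ or by the clause $\rulefor T \lambda \cdot \prec X$ with witnesses of $\prec$-rank below $\lambda$. If both hypotheses come from $\Box_T$, then internal K for $\Box_T$ yields $\Box_T\phi_2$, hence $\provx 0 X\phi_2$ by \eqref{equation:ProvabilityImpliesZeroProv}, and finally $\provx\lambda X\phi_2$ by the internal monotonicity extracted from the proof of Lemma \ref{theorem:monotonicity}. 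If one side comes from $\Box_T$ and the other from rule witnesses, say $(\chi,\eta)$ witness $\phi_1$ and $\Box_T(\phi_1\to\phi_2)$ holds, then first-order reasoning inside $T$ gives $\Box_T(\forall x\,\chi(x)\to\phi_2)$, so the same witnesses $(\chi,\eta)$ establish $\rulefor T \lambda {\phi_2} \prec X$; the symmetric case is analogous.

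The crucial case is when both hypotheses come from the $\omega$-rule, via witnesses $(\psi,\xi)$ for $\phi_1\to\phi_2$ and $(\chi,\eta)$ for $\phi_1$ with $\xi,\eta\prec\lambda$. Here I plan to form the conjunction $\psi\wedge\chi$ at the common level $\zeta:=\max_\prec(\xi,\eta)\prec\lambda$. Internal monotonicity lifts both $\forall n\,\provx\xi X\psi(\dot n)$ and $\forall n\,\provx\eta X\chi(\dot n)$ to level $\zeta$, and then the induction hypothesis $\psi_X(\zeta)$, applied twice to the $\Box_T$-provable (hence $\provx\zeta X$-derivable) conjunction-introduction schema $\psi(\dot n)\to(\chi(\dot n)\to\psi(\dot n)\wedge\chi(\dot n))$, yields $\forall n\,\provx\zeta X(\psi(\dot n)\wedge\chi(\dot n))$. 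Meanwhile, combining the two $\Box_T$-premises by pure propositional logic inside $T$ gives $\Box_T(\forall x\,(\psi(x)\wedge\chi(x))\to\phi_2)$, so the witnesses $(\psi\wedge\chi,\zeta)$ trigger $\rulefor T \lambda {\phi_2} \prec X$, delivering $\provx\lambda X\phi_2$. The main obstacle is precisely this double-rule case: merging two $\omega$-rule witnesses into a single rule application for $\phi_2$ requires internal closure under conjunction at the lower level $\zeta$, and this is exactly the content supplied by the induction hypothesis.
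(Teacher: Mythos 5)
Your proposal is correct and follows essentially the same route as the paper's proof: fix an arbitrary $X$ with $\Provfor_T(X)$, prove the internal distribution principle by transfinite induction on $\lambda$ (justified via Lemma \ref{theorem:WellOrderingWithComprehensionImpliesTI}), and in the key case merge the two $\omega$-rule witnesses by forming $\psi\wedge\chi$ at the level $\max_\prec(\xi,\eta)$, using the induction hypothesis to perform Modus Ponens inside $\provx\zeta X$ and first-order logic under $\Box_T$ to combine the side premises. Your explicit treatment of the $\Box_T$ and mixed cases is a detail the paper leaves implicit, but it does not change the argument.
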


\CurrentProof{
\proof We reason within $U^\prec$.

Let $X$ be a provability predicate. We shall prove by induction on $\lambda$ that 
\begin{equation}\label{equation:TheKAxiom}
\forall \phi_1,\phi_2\ \Big(\provx \lambda X\phi_1\wedge \provx \lambda X (\phi_1\to\phi_2) \to \provx \lambda X \phi_2\Big).
\end{equation}
Note that by Lemma \ref{theorem:WellOrderingWithComprehensionImpliesTI} we only need $\Sigma^0_1$ comprehension (with set parameters) to have access to this transfinite induction.

So, we assume that $\provx \lambda X\phi_1\wedge \provx \lambda X (\phi_1\to\phi_2)$ and let $\psi_1,\psi_2$ be such that
\begin{enumerate}
\item for each $i=1,2$ there is $\xi_i<\lambda$ such that for all $n<\omega$, $\la \xi_i, \psi_i(\overline n)\ra\in X$,
\item 
$\Box_T(\forall x\psi_1(x)\to\phi_1)$,

\item 
$\Box_T(\forall x\psi_2(x)\to(\phi_1\to\phi_2))$.
\end{enumerate}

By first-order logic we see that
\begin{equation}\label{equation:label}
\Box_T(\forall x(\psi_1(x)\wedge \psi_2(x))\to\phi_2).
\end{equation}

Let $\xi=\max\{\xi_1,\xi_2\}$. By induction on $\xi\prec \lambda$ and several uses of Modus Ponens inside $\provx\xi X$ we obtain for each $n$ that $\provx \xi X{(\psi_1(\overline n)\wedge\psi_2(\overline n))}$. But given that $X$ is an IPC, this shows in combination with \eqref{equation:label} that $\provx \lambda X \phi_2$ and we have shown \eqref{equation:TheKAxiom}.

To conclude the proof, we assume that $[\lambda]^\prec_T(\phi_1\to\phi_2) \, \wedge \, \prov\lambda\phi_1$. Thus, for an arbitrary provability predicate $X$ we have $\provx \lambda X (\phi_1\to\phi_2) \, \wedge \, \provx\lambda X \phi_1$ whence by \eqref{equation:TheKAxiom} also $\provx \lambda X \phi_2$. As $X$ was arbitrary, we obtain $\prov \lambda \phi_2$.
\endproof
}

\AlternativeProof{
\proof We reason within $U$.

Let $X$ be a provability predicate. We shall prove by induction on $\lambda$ that 
\begin{equation}\label{equation:TheKAxiom}
\forall \phi_1,\phi_2\ \Big(\provx \lambda X\phi_1\wedge \provx \lambda X (\phi_1\to\phi_2) \to \provx \lambda X \phi_2\Big).
\end{equation}
Note that by Lemma \ref{theorem:WellOrderingWithComprehensionImpliesTI} we only need $\Sigma^0_1$ comprehension (with set parameters) to have access to this transfinite induction.

So, we assume that $\provx \lambda X\phi_1\wedge \provx \lambda X (\phi_1\to\phi_2)$ and let $\psi_1,\psi_2$ be such that
\begin{enumerate}
\item for each $i=1,2$ there is $\xi_i<\lambda$ such that for all $n<\omega$, $\la \xi_i, \psi_i(\overline n)\ra\in X$,
\item 
$\Box_T(\forall x\psi_1(x)\to\phi_1)$,

\item 
$\Box_T(\forall x\psi_2(x)\to(\phi_1\to\phi_2))$.
\end{enumerate}

By first-order logic we see that
\begin{equation}\label{equation:label}
\Box_T(\forall x(\psi_1(x)\wedge \psi_2(x))\to\phi_2).
\end{equation}

Meanwhile, if we let $\xi=\max \xi_i$ we obtain for each number $n$, by the induction hypothesis and Lemma \ref{theorem:monotonicity} (monotonicity), that
\begin{align*}
\provx\xi X {\psi_1 (\overline n)}&\, \wedge\, \provx \xi X {\Big(\psi_1 (\overline n){\to} (\psi_2 (\overline n) {\to} (\psi_1 (\overline n) \wedge \psi_2 (\overline n)))\Big)}\\
&\rightarrow\provx \xi X {\Big(\psi_2 (\overline n) \to (\psi_1 (\overline n) \wedge \psi_2 (\overline n))\Big)},
\end{align*}
and likewise
\begin{align*}
\provx \xi X \psi_2 (\overline n) &\,  \wedge\,  \provx \xi X {\Big(\psi_2 (\overline n) {\to} (\psi_1 (\overline n) \wedge \psi_2 (\overline n))\Big)}\\
&\rightarrow\provx \xi X(\psi_1 (\overline n) \wedge \psi_2 (\overline n)).
\end{align*}
Since clearly
\[\provx \xi X {\Big(\psi_1(\overline n) {\to} \big(\psi_2(\overline n) {\to} (\psi_1(\overline n) \wedge \psi_2(\overline n))\big)\Big)},\]
we obtain for each $n$ that $\provx \xi X{(\psi_1(\overline n)\wedge\psi_2(\overline n))}$. But given that $X$ is a provability predicate, this shows in combination with \eqref{equation:label} that $\provx \lambda X \phi_2$ and we have shown \eqref{equation:TheKAxiom}.

To conclude the proof, we assume that $[\lambda]^\prec_T(\phi_1\to\phi_2) \, \wedge \, \prov\lambda\phi_1$. Thus, for an arbitrary provability predicate $X$ we have $\provx \lambda X (\phi_1\to\phi_2) \, \wedge \, \provx\lambda X \phi_1$ whence by \eqref{equation:TheKAxiom} also $\provx \lambda X \phi_2$. As $X$ was arbitrary, we obtain $\prov \lambda \phi_2$.
\endproof
}

With our distribution axiom at hand we can now obtain a formalized Deduction Theorem.

\begin{lemma}
Let $U$ be a theory extending \base and let $T$ be representable. We have that
\[
U^\prec \vdash [\lambda]_{T+\theta}^\prec\phi \leftrightarrow [\lambda]^\prec_T (\theta \to \phi).
\]
\end{lemma}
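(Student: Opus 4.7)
The plan is to establish the two implications separately. The $\rightarrow$ direction falls out immediately from Lemma \ref{theorem:ConditionalProvPreds}: given any $Y$ with $\Provfor_T(Y)$, the set $\{Y|\theta\}$ satisfies $\Provfor_{T+\theta}$, so the assumption $[\lambda]^\prec_{T+\theta}\phi$ applied to $\{Y|\theta\}$ yields $\la\lambda,\phi\ra\in\{Y|\theta\}$, which by the very definition of $\{Y|\theta\}$ is $\la\lambda,\theta\to\phi\ra\in Y$; since $Y$ was arbitrary we obtain $[\lambda]^\prec_T(\theta\to\phi)$.

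For the $\leftarrow$ direction the plan is to prove, by transfinite induction on $\lambda$ (available in $U^\prec$ by Lemma \ref{theorem:WellOrderingWithComprehensionImpliesTI}, since we have ${\tt wo}(\prec)$ and arithmetic comprehension from \base), the auxiliary claim: whenever $\Provfor_T(Y)$ and $\Provfor_{T+\theta}(Z)$ both hold, $\la\lambda,\phi\ra\in Z\leftrightarrow\la\lambda,\theta\to\phi\ra\in Y$ for every $\phi$. The inductive step is driven by unfolding the defining recursion of both predicates at level $\lambda$. The leading disjunct matches via the ordinary Deduction Theorem $\Box_{T+\theta}\rho\leftrightarrow\Box_T(\theta\to\rho)$. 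The $\omega$-rule disjunct on the $Z$-side is transformed into the $\omega$-rule disjunct on the $Y$-side for $\theta\to\phi$ by combining the inductive hypothesis (to convert $\la\eta,\chi(\bar n)\ra\in Z$ into $\la\eta,\theta\to\chi(\bar n)\ra\in Y$ for $\eta\prec\lambda$) with the tautology from the proof of Lemma \ref{theorem:ConditionalProvPreds}, $\bigl(\theta\to(\forall x\chi(x)\to\phi)\bigr)\leftrightarrow\bigl(\forall x(\theta\to\chi(x))\to(\theta\to\phi)\bigr)$, upon setting $\chi'(x):=\theta\to\chi(x)$. For the opposite passage, from a $Y$-side witness $\chi'$ back to the $Z$-side, keep $\chi:=\chi'$ and use the $K$-axiom (Lemma \ref{kaxiom}) together with Corollary \ref{theorem:SoundnessOfNecessitation} applied to the tautology $\chi'(\bar n)\to(\theta\to\chi'(\bar n))$ to promote $\la\eta,\chi'(\bar n)\ra\in Y$ to $\la\eta,\theta\to\chi'(\bar n)\ra\in Y$, so that the inductive hypothesis becomes applicable.

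Once the auxiliary claim is in place, the $\leftarrow$ direction of the lemma is routine: assume $[\lambda]^\prec_T(\theta\to\phi)$ and fix $Z$ with $\Provfor_{T+\theta}(Z)$; for any $Y$ with $\Provfor_T(Y)$ the hypothesis yields $\la\lambda,\theta\to\phi\ra\in Y$, and the claim transports this to $\la\lambda,\phi\ra\in Z$. Uniqueness of the $T+\theta$-provability predicate granted by ${\tt wo}(\prec)$ (Lemma \ref{provunique}), together with the production of one such predicate as $\{Y|\theta\}$ via Lemma \ref{theorem:ConditionalProvPreds}, makes this a genuine property of $Z$ alone.

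I expect the main obstacle to be the careful bookkeeping around matching the $\omega$-rule witness formulas $\chi$ and $\chi'$ in the inductive step: the substitution $\chi'(x)=\theta\to\chi(x)$ and its inverse each require the tautology from the proof of Lemma \ref{theorem:ConditionalProvPreds} and routine appeals to the $K$-axiom and necessitation in order to align the applications of the inductive hypothesis on the two sides; everything else is formal manipulation.
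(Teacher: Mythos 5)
Your proof of the implication from $[\lambda]^\prec_{T+\theta}\phi$ to $[\lambda]^\prec_T(\theta\to\phi)$ is exactly the paper's: apply the hypothesis to $\{Y|\theta\}$, which is a $(T+\theta)$-IPC by Lemma \ref{theorem:ConditionalProvPreds}, and unfold the definition of $\{Y|\theta\}$. For the converse the paper takes a shorter route than yours: by Lemma \ref{theorem:ProvPredMonotoneInT} (monotonicity in the theory), $[\lambda]^\prec_T(\theta\to\phi)$ gives $[\lambda]^\prec_{T+\theta}(\theta\to\phi)$; since $T+\theta\vdash\theta$ yields $[\lambda]^\prec_{T+\theta}\theta$, the distribution axiom (Lemma \ref{kaxiom}) finishes. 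Your transfinite induction establishing $\la\lambda,\phi\ra\in Z\leftrightarrow\la\lambda,\theta\to\phi\ra\in Y$ is correct --- it essentially re-runs the proof of Lemma \ref{theorem:ConditionalProvPreds} in both directions --- but it is redundant work: since $Z$ and $\{Y|\theta\}$ are both $(T+\theta)$-IPCs, Lemma \ref{provunique} already gives $Z\equiv\{Y|\theta\}$ under ${\tt wo}(\prec)$, and your auxiliary claim drops out with no new induction. One caveat: your argument for this direction only says something about a given $Z$ with $\Provfor_{T+\theta}(Z)$ once some $Y$ with $\Provfor_T(Y)$ is in hand, and the paper explicitly warns that $\exists Z\,\Provfor_{T+\theta}(Z)\to\exists Y\,\Provfor_T(Y)$ should not be expected in $U$; the paper's own route hides the same dependence inside the proof of Lemma \ref{theorem:ProvPredMonotoneInT}, so this does not put you below the paper's standard of rigour, but it is worth noticing.
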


\begin{proof}
If $[\lambda]_T^\prec (\theta \to \phi)$ then, by Lemma \ref{theorem:ProvPredMonotoneInT} we also have $[\lambda]_{T+\theta}^\prec (\theta \to \phi)$. Since clearly $[\lambda]^\prec_{T+\theta}\theta$, by the distribution axiom we get $[\lambda]_{T+\theta}^\prec \phi$.

For the other direction, reason in $U^\prec$ and assume $[\lambda]_{T+\theta}^\prec \phi$. Let $X$ be arbitrary with $\Provfor_T(X)$. By Lemma \ref{theorem:ConditionalProvPreds} we see that  $\Provfor_{T+\theta}(\{X|\theta\})$. Now by the assumption that $[\lambda]_{T+\theta}^\prec \phi$ we see that $\provx \lambda {\{X|\theta\}} \phi$ so that consequently $\provx \lambda {X} (\theta \to \phi)$. 
\end{proof}
So far we have shown that some of the axioms of $\glp_{\mathord \prec}$ are sound for our omega-rule interpretation; L\"ob's axiom and the ``provable consistency'' axiom remain to be checked. For the former, the following lemma will be quite useful.

\begin{lemma}
Extend $\sf GL$ with a new operator $\blacksquare$ and the following axioms  for all formulas $\phi,\mbox{ and }\psi$:
\begin{enumerate}
\item $\vdash\nc\phi\to \blacksquare\phi$,
\item $\vdash \blacksquare(\phi\to\psi)\to(\blacksquare\phi\to \blacksquare\psi)$ and,
\item $\vdash \blacksquare\phi\to \blacksquare\blacksquare\phi$,
\end{enumerate}
and call the resulting system ${\sf GL}^\blacksquare$.

Then for all $\phi$,
\[{\sf GL}^\blacksquare\vdash \blacksquare(\blacksquare\phi\to\phi)\to \blacksquare\phi.\]
\end{lemma}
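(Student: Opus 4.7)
My plan is to imitate the classical derivation of the L\"ob axiom from L\"ob's rule together with K and 4. First I will establish a L\"ob rule for $\blacksquare$: if $\vdash \blacksquare\chi\to\chi$ then $\vdash \chi$. To see this, note that axiom (1) gives $\vdash \nc\chi\to\blacksquare\chi$, so any hypothesis $\vdash\blacksquare\chi\to\chi$ yields $\vdash\nc\chi\to\chi$; from this the ordinary L\"ob rule in \gl{} (which is immediate from L\"ob's axiom together with necessitation) produces $\vdash\chi$.

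Next I will apply this derived rule to the very formula we wish to prove. Setting $\chi:=\blacksquare(\blacksquare\phi\to\phi)\to\blacksquare\phi$, it suffices to verify $\vdash\blacksquare\chi\to\chi$; equivalently, under the hypotheses $A:=\blacksquare(B\to\blacksquare\phi)$ and $B:=\blacksquare(\blacksquare\phi\to\phi)$ I must derive $\blacksquare\phi$. The derivation uses only axioms (2) and (3). Axiom (3) applied to $B$ yields $\blacksquare B$, and then axiom (2) applied to $A$ gives $\blacksquare\blacksquare\phi$; meanwhile axiom (2) applied to $B$ (using propositional logic inside $\blacksquare$) gives $\blacksquare\blacksquare\phi\to\blacksquare\phi$, so $\blacksquare\phi$ follows.

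The only conceptual step is the recognition that L\"ob's \emph{rule}, once combined with K and 4 for $\blacksquare$, is already enough to bootstrap L\"ob's \emph{axiom} — a standard modal trick that applies here because axiom (1) makes the L\"ob rule for $\blacksquare$ essentially free from the one for $\nc$. No fixed-point construction is required: the work is absorbed into the L\"ob rule for $\nc$, and the rest is just a couple of applications of $K$ and $4$.
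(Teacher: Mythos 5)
Your proof is correct and follows essentially the same route as the paper: both reduce the problem to verifying the L\"ob rule for $\blacksquare$ via axiom (1), and then invoke the standard fact that ${\sf K4}$ plus the L\"ob rule yields L\"ob's axiom. The only difference is that you spell out that standard derivation explicitly (and correctly), whereas the paper simply cites it as well known.
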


\proof
It is well-known that $\sf GL$ is equivalent to $\sf K4$ plus the L\"ob Rule:
\[
\frac{\nc\phi\rightarrow\phi}\phi.
\]
Thus it suffices to check that this rule holds for $\blacksquare$. But indeed, assume that ${\sf GL}^\blacksquare\vdash \blacksquare\phi\to\phi$. Then, using $\nc\phi\to\blacksquare\phi$ we obtain $\nc\phi\to\phi$, and by L\"ob's rule (for $\nc$) we see that ${\sf GL}^\blacksquare\vdash\phi$, as desired.
\endproof

Thus to show that $[\lambda]^\prec_T$ is L\"obian for all $\lambda$, we need only show the following:

\begin{lemma}\label{trans}
Given a recursive order $\prec$, theories $U,T$ where $U$ extends \base and $T$ is representable, we have that
\[
U^\prec \vdash \forall\phi \forall \lambda\ \ \prov \lambda \phi \to \prov \lambda \prov{\dot\lambda}{\dot\phi}. 
\]
\end{lemma}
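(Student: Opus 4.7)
The plan is a transfinite induction along $\prec$ that mimics the recursive unpacking of $\provx\lambda Y \phi$ for a fixed provability predicate $Y$. Reasoning in $U^\prec$, if no IPC for $T$ exists then both $\prov\lambda\chi$ and $\prov\lambda\prov{\dot\lambda}{\dot\chi}$ are vacuously true for every $\lambda,\chi$, so we may fix some $Y$ with $\Provfor_T(Y)$. By Lemma~\ref{provunique}, the existence of $Y$ makes $\prov\xi\theta$ equivalent to $\provx\xi Y\theta$ for every $\xi,\theta$, so it suffices to prove
\[
\forall\phi\,\bigl(\provx\lambda Y\phi\to\provx\lambda Y\prov{\dot\lambda}{\dot\phi}\bigr)
\]
by transfinite induction on $\lambda$. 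Once $Y$ is fixed, the body of this statement is $\Pi^0_1$ with $Y$ as set-parameter, so Lemma~\ref{theorem:WellOrderingWithComprehensionImpliesTI} together with arithmetic comprehension with set-parameters (which is available in \base) delivers the transfinite induction.

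Fix $\lambda$, assume the IH, and suppose $\provx\lambda Y\phi$. Unfolding $\Provfor_T(Y)$ we split into two cases: (i)~$\Box_T\phi$, or (ii)~there exist a formula $\psi$ and an ordinal $\xi\prec\lambda$ with $\forall n\,\provx\xi Y\psi(\dot n)$ and $\Box_T(\forall x\,\psi(x)\to\phi)$. Case~(i) is routine: $\Box_T\phi$ is $\hat\Sigma^0_1$, so by formalized $\Sigma^0_1$-completeness we have $\Box_T\Box_T\phi$, and using the $T$-provable implication $\Box_T\phi\to\prov{\dot\lambda}{\dot\phi}$ obtained from \eqref{equation:ProvabilityImpliesZeroProv} together with Lemma~\ref{theorem:monotonicity}, we can upgrade this to $\Box_T\prov{\dot\lambda}{\dot\phi}$. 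This gives the first disjunct of $\provx\lambda Y\prov{\dot\lambda}{\dot\phi}$.

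The crux is case~(ii), and my intended witness formula is $\chi(x):=\prov{\dot\xi}{\psi(\dot x)}$. Applying the IH at $\xi$ to each formula $\psi(\overline n)$ immediately yields $\forall n\,\provx\xi Y\chi(\overline n)$. It remains to produce $\Box_T(\forall x\,\chi(x)\to\prov{\dot\lambda}{\dot\phi})$. For this I will internalize Lemma~\ref{theorem:RecursiveEquationProved}(1), which is a $T$-theorem: instantiated inside $\Box_T$ with the relevant values, together with the $\hat\Sigma^0_1$ facts $\dot\xi\prec\dot\lambda$ (from the representability of $\prec$) and $\Box_T(\forall x\,\psi(x)\to\phi)$ (both liftable into $\Box_T$ by $\Sigma^0_1$-completeness), it produces exactly $\Box_T(\forall x\,\chi(x)\to\prov{\dot\lambda}{\dot\phi})$. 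Combining with $\forall n\,\provx\xi Y\chi(\overline n)$ and $\xi\prec\lambda$ witnesses the rule clause of $\provx\lambda Y\prov{\dot\lambda}{\dot\phi}$ via $\chi$ and $\xi$, completing the inductive step. The main obstacle is the careful bookkeeping in this last case: one must verify that the substitution and code-constructors $\dot{(\cdot)}$ interact correctly with the witness $\chi(x)=\prov{\dot\xi}{\psi(\dot x)}$, and ensure that every auxiliary fact about $\prec$ and about IPCs that is invoked under $\Box_T$ is genuinely a $T$-theorem, which is where we implicitly require that $T$ shares the basic arithmetic infrastructure with $U$.
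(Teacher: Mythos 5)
Your proposal is correct and follows essentially the same route as the paper's proof: fix an iterated provability class, run a transfinite induction on $\lambda$, handle the $\Box_T\phi$ case by formalized $\hat\Sigma^0_1$-completeness, and in the rule case apply the induction hypothesis at $\xi$ to obtain the witness $\prov{\dot\xi}{\psi(\dot x)}$ for one further $\omega$-rule application, closing with Lemma~\ref{theorem:RecursiveEquationProved}(1) under the box. The only cosmetic differences are your detour through Lemma~\ref{provunique} (the paper instead just quantifies over the arbitrary IPC at the end) and your merging of the $\Box_T\phi$ and rule cases into a single case split, neither of which changes the substance.
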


\proof
Reason within $U^\prec$. We assume $\Provfor_T(X)$ and will show by induction on $\prec$ that if $\provx \lambda X\phi$, then $\provx{\lambda} X{\prov{\bar\lambda}{\bar\phi}}$, from which the lemma clearly follows.

The base case, when $\lambda{=}0$, is straightforward. We assume $\provx  0 X \phi$ and by Lemma \ref{zerolemm} we get $\Box_T\phi$ whence $\Box_T\Box_T\bar\phi$ by provable $\hat \Sigma^0_1$-completeness of $U$. Consequently, by applying \eqref{equation:ProvabilityImpliesZeroProv} twice (once under the box) we obtain $\prov 0 \prov {\bar 0} {\bar\phi}$ whence certainly also $\provx 0 X  \prov {\bar 0} {\bar \phi}$. 

Now assume that $\lambda\succ 0$ and there are $\xi\prec\lambda$ and $\psi$ such that for all $n$, $\provx \xi X{\psi(\overline n)}$ and $\Box_T(\forall x\psi(x)\to\phi)$.

By the induction hypothesis on $\xi\prec\lambda$, for every number $n$ we can see that $\provx \xi X{\prov{\bar\xi}{{\bar\psi}(\bar n)}}$. Thus we obtain by one application of the $\omega$-rule that
\begin{equation}\label{someq}
\provx \lambda X{\forall n \prov {\bar\xi}}{{\bar\psi}(\dot n)}.
\end{equation}

Meanwhile, we have that $\provx 0 X \Box_T(\forall x\bar\psi(x)\to\bar\phi)$ from which it follows by monotonicity that
\begin{equation}\label{othereq}
\provx \lambda X{\Box_T(\forall x\bar\psi(x)\to\bar\phi)}.
\end{equation}
Since $\prec$ is recursive we also have that 
\begin{equation}\label{yetothereq}
\provx \lambda X \, {\bar\xi} \prec {\bar\lambda} .
\end{equation}
Putting (\ref{someq}), (\ref{othereq}) and (\ref{yetothereq}) together and bringing the existential quantifiers under the box we conclude that
\[
\provx \lambda X \Big(\exists \psi \, \exists \, \xi{\prec}\lambda\ \big( \forall n \prov \xi {\psi (\dot n)} \wedge \Box_T (\forall x \psi (x) \to \phi)\big)\Big).
\]
By an application under the box of Lemma \ref{theorem:RecursiveEquationProved}.\ref{theorem:item:NoIntrospectivityNeeded} (note that no need of introspection is required) we obtain $\provx \lambda X {\prov{\bar\lambda}{\bar\phi}}$ as was to be proven. 
\endproof

In the proof of the remaining $\glp_\prec$ axiom we will need for the first and only time the assumption that $T$ is introspective. 

\begin{lemma}\label{hardaxiom} 
If $U$ is any theory extending $\base$, $\prec$ is recursive and $T$ is representable and $\prec$-introspective, then
\[
U^\prec\vdash \forall\phi\forall\lambda\forall \xi\prec\lambda \  \cons \xi\phi \rightarrow\prov\lambda{\cons {\dot\xi}\dot\phi}.
\]
\end{lemma}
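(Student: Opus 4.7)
My plan is a $\prec$-transfinite induction on $\xi$ inside $U^\prec$, targeting
\[
\forall \phi\ \forall \lambda\succ\xi\ \big(\cons\xi\phi\to\prov\lambda{\cons{\dot\xi}{\dot\phi}}\big).
\]
Fixing $\xi$, $\lambda\succ\xi$ and $\phi$ and assuming $\cons\xi\phi$, I first extract an IPC $Y$ witnessing $\consx\xi Y\phi$; Lemma~\ref{provunique} together with ${\tt wo}(\prec)$ makes $Y$ the canonical IPC. Unfolding the defining clause of $\Provfor_T(Y)$ at $\langle\xi,\neg\phi\rangle$ splits $\consx\xi Y\phi$ into the $\Pi^0_1$-assertion $\Diamond_T\phi$ together with (when $\xi\succ 0$) the universal
\[
\forall\psi\ \forall\,\eta\prec\xi\ \big(\exists m\,\consx\eta Y{\neg\psi(\dot m)}\ \vee\ \Diamond_T\exists y(\psi(y)\wedge\phi)\big).
\]
I will derive $\prov\lambda{\Diamond_T\dot\phi}$ and $\prov\lambda$ of (the formalisation of) the universal conjunct separately, then combine them using the $T$-provable recursive equivalence of Lemma~\ref{theorem:RecursiveEquationProved}.2---this is the step where the hypothesis that $T$ is introspective is essentially used---together with distribution (Lemma~\ref{kaxiom}).

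The $\Diamond$-part, which is all that is needed in the base case $\xi = 0$, is the classical Japaridze $\omega$-rule argument. The $\Delta^0_0$ formula $\tau(y):=\neg\provfor_T(y,\neg\dot\phi)$ has, for each $n$, a true $\Delta^0_0$ instance $\tau(\bar n)$ witnessed by $\Diamond_T\phi$; such an instance is $T$-provable, and Lemma~\ref{theorem:RecursiveEquationProved}.1 lifts this to $\prov 0{\tau(\dot n)}$, while $\Box_T(\forall y\,\tau(y)\to\Diamond_T\dot\phi)$ is immediate from the definition of $\Diamond_T$. An $\omega$-rule application at depth $0\prec\lambda$ then yields $\prov\lambda{\Diamond_T\dot\phi}$, and via the formalised Lemma~\ref{zerolemm} this is $\prov\lambda{\cons{\dot 0}{\dot\phi}}$.

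For the universal conjunct I apply the $\omega$-rule at depth $\xi\prec\lambda$ with the formula $\tau(z)$ that decodes $z=\langle\psi,\eta\rangle$ and asserts ``if $\eta\prec\dot\xi$ then $\exists m\,\cons{\dot\eta}{\neg\psi(\dot m)}\vee\Diamond_T\exists y(\psi(y)\wedge\dot\phi)$''; the implication $\Box_T(\forall z\,\tau(z)\to\forall\psi\forall\eta\prec\dot\xi(\ldots))$ is immediate from the coding. To verify $\prov\xi{\tau(\dot n)}$ for each $n$ I split cases. If $n$ does not code a pair with $\eta\prec\xi$, then $\tau(\bar n)$ is vacuously true and $T$-provable thanks to the recursiveness of $\prec$, so $\prov\xi{\tau(\dot n)}$ follows by Lemmas~\ref{theorem:RecursiveEquationProved}.1 and~\ref{theorem:monotonicity}. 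Otherwise the universal conjunct selects a true disjunct externally: in the first disjunct I pick a witness $m$ for $\consx\eta Y{\neg\psi(\dot m)}$ and apply the induction hypothesis at $\eta\prec\xi$ with $\lambda'=\xi$ to obtain $\prov\xi{\cons{\dot\eta}{\neg\psi(\dot m)}}$, then use existential and disjunction introduction inside $[\xi]^\prec_T$ (via Lemma~\ref{kaxiom} and Corollary~\ref{theorem:SoundnessOfNecessitation}) to conclude $\prov\xi{\tau(\dot n)}$; in the second disjunct I rerun the $\Diamond$-argument of the previous paragraph with $\lambda'=\xi$ applied to the $\Pi^0_1$ statement $\Diamond_T\exists y(\psi(y)\wedge\phi)$ and introduce the disjunction. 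Thus $\omega$-rule gives $\prov\lambda{\forall\psi\forall\eta\prec\dot\xi(\ldots)}$, and combining with $\prov\lambda{\Diamond_T\dot\phi}$ through the recursive equivalence yields $\prov\lambda{\cons{\dot\xi}{\dot\phi}}$.

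The main obstacles are twofold. The first is the arithmetisation bookkeeping: verifying that $T$ really proves the implications linking $\tau$ to the conjuncts of $\cons{\dot\xi}{\dot\phi}$, and that the existential and disjunction introductions ``inside the modality'' reduce cleanly. The second, more delicate point is that the induction formula is of $\Pi^1_1$-complexity, so the invocation of Lemma~\ref{theorem:WellOrderingWithComprehensionImpliesTI} will demand a correspondingly strong comprehension instance from $U$; and it is precisely the introspection of $T$ (as already flagged in the statement) that converts the external assertion $\cons\xi\phi$ into the internally $T$-provable recursive form on which the $\omega$-rule machinery can operate and to which Lemma~\ref{theorem:RecursiveEquationProved}.2 can be applied.
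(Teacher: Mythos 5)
You reconstruct the paper's argument in all essentials: the canonical IPC obtained via Lemma~\ref{provunique}, the unfolding of $\consx\xi X\phi$ into $\Diamond_T\phi$ plus a universal statement over coded pairs $\langle\psi,\eta\rangle$ with $\eta\prec\xi$, an $\omega$-rule applied to a formula decoding those pairs, the Japaridze-style handling of the $\hat\Pi^0_1$ conjuncts, and the use of introspection (Lemma~\ref{theorem:RecursiveEquationProved}.2, equivalently Lemma~\ref{zerolemm} under the box) to turn the negated recursion back into $\cons{\dot\xi}{\dot\phi}$. Organizing the induction on $\xi$ with $\lambda$ quantified, rather than on $\lambda$ with $\xi$ quantified as the paper does, is immaterial: both proofs invoke exactly the instance of the claim at the pair $(\eta,\xi)$ with $\eta\prec\xi\prec\lambda$.

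The genuine gap is the point you flag in your final paragraph but leave unresolved. Your induction formula $\forall\phi\,\forall\lambda\succ\xi\,\big(\cons\xi\phi\to\prov\lambda{\cons{\dot\xi}{\dot\phi}}\big)$ has a $\Sigma^1_1$ antecedent and a $\Pi^1_1$ consequent, hence is $\Pi^1_1$; by Lemma~\ref{theorem:WellOrderingWithComprehensionImpliesTI} the corresponding transfinite induction costs a $\Sigma^1_1$ comprehension instance, which a theory $U$ merely extending \base does not possess, so the lemma as stated would not be established for such $U$. The repair is to pull the relativization you already perform \emph{inside} the induction step to the \emph{outside}: fix an arbitrary $X$ with $\Provfor_T(X)$ first and prove $\consx\xi X\phi\to\provx\lambda X{\cons{\bar\xi}{\bar\phi}}$ by transfinite induction --- this formula is $\Delta^0_0$ in the set parameter $X$, so arithmetic comprehension suffices for the induction --- and only at the very end quantify over $X$, invoking Lemma~\ref{provunique} once to reconcile the existential quantifier hidden in $\cons\xi\phi$ with the universal one in $\prov\lambda{}$. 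This is precisely how the paper's proof proceeds, and every other step of your argument survives the relativization unchanged.
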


\proof
We reason in $U^\prec$ and assume that $\xi\prec \lambda$. Let us first see that it is sufficient to show that for an arbitrary provability predicate  $X$ we have $\consx \xi X \phi \rightarrow\provx\lambda X {\cons {\bar\xi}{\bar\phi}}$. 

If we wish to show  $\prov\lambda{\cons {\bar\xi}\bar\phi}$ we pick an arbitrary provability predicate $X$ and set out to prove $\provx\lambda X{\cons {\bar\xi}\bar\phi}$. By the assumption $\cons \xi \phi$ we know that there is some provability predicate $Y$ with $\la \xi, \phi \ra \notin Y$, that is, $\consx \xi Y \phi$. By Lemma  \ref{provunique}, we have that $X\equiv Y$, whence $\consx \xi X \phi$ and indeed,  $\consx \xi X \phi \rightarrow\provx\lambda X {\cons {\bar\xi}\bar\phi}$ suffices to finish the proof.

In view of the above, we will prove $\consx \xi X \phi \rightarrow\provx\lambda X {\cons {\bar\xi}\bar\phi}$ by induction on $\lambda$. For the base case, when $\lambda=1$,  we reason as follows. From $\consx 0X\phi$, we obtain the $\hat \Pi^0_1$ sentence $\Diamond_T\phi$, that is, $\forall n \neg {\tt Proof}_T (n,\bar\phi)$. Since $\neg {\tt Proof}_T( n,\bar\phi) \in \hat \Sigma^0_1$, we get $\forall n \ \Box_T \neg {\tt Proof}_T(\dot n, \bar\phi)$ and also $\forall n \ \prov 0 { \neg {\tt Proof}_T(\dot n, \bar\phi)}$. Then by applying an $\omega$-rule  we see that $\provx 1X{\ps_T \phi}$. Since $T$ is $\prec$-introspective then $\provx 1 X{\exists Y \Provfor_T(Y)}$, and by Lemma \ref{zerolemm}
\[
\provx 1 X{\big(\exists Y\Provfor_T(Y)\rightarrow (\cons {\bar 0}\bar\phi\leftrightarrow \ps_T\bar\phi)\big)},
\]
from which we conclude that $\provx 1X{\cons {\bar 0}\bar\phi}.$

So assume that $\lambda\succ 1$. If we have that $\consx\xi X\phi$ then for every formula $\psi$ either 
\begin{enumerate}
\item 
for all $\eta\prec\xi$ there is $n<\omega$ such that $\consx \eta X {\neg \psi(\overline n)}$, or

\item 
$\Diamond_T(\forall x\psi(x)\wedge\phi)$ holds.
\end{enumerate}

In the first case, by the induction hypothesis for $\xi\prec\lambda$ we can see that $\exists n \ \provx \xi X{\cons{\bar\eta}{\neg \bar\psi(\bar n)}};$ in the second, we have that $\provx \xi X{\Diamond_T(\forall x\bar\psi(x)\wedge\phi)}.$ Combining these, we obtain
\[
\eta \prec \xi \to \provx\xi X{\Big(\exists x\cons{\bar\eta}{\neg \bar\psi( \dot x)}\vee\Diamond_T(\forall x\bar\psi(x)\wedge\bar\phi)\Big)}.
\]
Since $\prec$ is recursive, we know that $\eta\seq\xi \to \provx \xi X \bar\eta\seq\bar\xi$.
We thus see that, for all pairs $\langle\eta, \psi\rangle$,
\[
\provx\xi X{\Big(\bar\eta\seq\bar\xi\vee\exists x\cons{\bar\eta}{\neg \bar\psi( \dot x)}\vee\Diamond_T(\forall x\bar\psi(x)\wedge\bar\phi)\Big)}.
\]

By one application of the $\omega$-rule to all pairs $\langle \eta, \psi\rangle$ (represented as natural numbers) we obtain
\[
\provx \lambda X{\Big(\forall \psi\forall \eta\prec \xi\big( \exists x\cons \eta{\neg \psi(\dot x)}\vee\Diamond_T(\forall x\dot\psi(x)\wedge\bar\phi)\big)\Big)},
\]
and by definition (Lemma \ref{theorem:RecursiveEquationProved}.\ref{theorem:item:NoIntrospectivityNeeded} applied under the box) we get $\provx \lambda X{\cons{\bar\xi}\bar\phi}$.
\endproof

We have essentially proven that $\glp_{\mathord\prec}$ is sound for its omega-rule interpretation, but we need the following definition in order to make this claim precise.

\begin{definition}
An {\em arithmetic interpretation} is a function $f:\mathbb P\to \alang$.

We denote by $f^\prec_T$ the unique extension of $f$ such that $f^\prec_T(p)=f(p)$ for every propositional variable $p$, $f^\prec_T(\bot)=\bot$, $f^\prec_T$ commutes with Booleans and $f^\prec_T([\lambda]\phi)=[\bar\lambda]^\prec_T\ f^\prec_T(\phi)$. 
\end{definition}

\begin{theorem}[Soundness]\label{theorem:soundness}
If $\prec$ is any recursive well-order on the naturals, $U$ is a sound theory extending ${\rm ACA}_0$, $T$ is $\prec$-introspective and representable and ${\sf GLP}_\prec\vdash\phi$ then $U^\prec\vdash f^\prec_T(\phi)$ for every arithmetic interpretation $f$.
\end{theorem}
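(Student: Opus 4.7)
The plan is to proceed by induction on the length of a $\glp_\prec$-derivation of $\phi$, at each step invoking a soundness lemma from Section~\ref{secsound}. Propositional tautologies and modus ponens are classical, and substitution is transparent from the recursive definition of $f^\prec_T$, so the real content lies in verifying the five modal axiom schemes together with necessitation. The distribution axiom $[\xi](\phi\to\psi)\to([\xi]\phi\to[\xi]\psi)$ is exactly Lemma~\ref{kaxiom}; the inclusion axiom $\<\zeta\>\phi\to\<\xi\>\phi$ for $\xi\prec\zeta$ is the contrapositive of Lemma~\ref{theorem:monotonicity}; and $\<\xi\>\phi\to[\zeta]\<\xi\>\phi$ for $\xi\prec\zeta$ is Lemma~\ref{hardaxiom}, which is the single place where the $\prec$-introspectivity hypothesis on $T$ is essential.

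For L\"ob's axiom $[\xi]([\xi]\phi\to\phi)\to[\xi]\phi$ I would invoke the ${\sf GL}^\blacksquare$ lemma of Section~\ref{secsound} with $\nc := \Box_T$ and $\blacksquare := [\xi]^\prec_T$. Its three premises translate into theorems of $U^\prec$: distribution for $\blacksquare$ is Lemma~\ref{kaxiom}; the 4-axiom $\blacksquare\psi\to\blacksquare\blacksquare\psi$ is the instance $\lambda=\xi$ of Lemma~\ref{trans}; and $\Box_T\psi\to[\xi]^\prec_T\psi$ is obtained by chaining \eqref{equation:ProvabilityImpliesZeroProv} with Lemma~\ref{theorem:monotonicity}. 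Because $U\supseteq{\rm ACA}_0$ proves formalized L\"ob for the ordinary provability predicate $\Box_T$, the purely modal derivation given in the ${\sf GL}^\blacksquare$ lemma formalizes inside $U^\prec$ and yields the required arithmetic instance for $[\xi]^\prec_T$.

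The one genuinely delicate step is necessitation, where we must bridge the gap between $U^\prec$-provability of an interpretation and the $T$-provability that feeds into $\Box_T$. To handle this I would strengthen the inductive hypothesis so that it additionally records $T\vdash f^\prec_T(\phi)$ (or $\overline T\vdash f^\prec_T(\phi)$, when introspection inside the theory is needed) for every $\glp_\prec$-theorem $\phi$; each soundness lemma cited above applies equally with $U$ replaced by $T$ (using that $T$ is representable, $\prec$-introspective, and extends $\base$), so the enhanced induction propagates cleanly. With $T\vdash f^\prec_T(\phi)$ in hand, Corollary~\ref{theorem:SoundnessOfNecessitation} together with Lemma~\ref{theorem:monotonicity} delivers $U^\prec\vdash [\xi]^\prec_T f^\prec_T(\phi)$, via formalized $\hat\Sigma^0_1$-completeness inside $U$ applied to the true sentence $\Box_T f^\prec_T(\phi)$. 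Managing this separation between $U$ and $T$ at the necessitation step is the main obstacle; once it is resolved, the theorem follows by a routine assembly of the preceding lemmas.
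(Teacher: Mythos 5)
Your proposal is correct and follows essentially the same route as the paper, whose proof is only the two-line remark that one inducts on the length of the $\glp_\prec$-derivation, citing Lemmas~\ref{theorem:monotonicity}, \ref{kaxiom}, \ref{trans} (via the ${\sf GL}^\blacksquare$ lemma) and \ref{hardaxiom} for the axioms and Corollary~\ref{theorem:SoundnessOfNecessitation} for necessitation. Your explicit strengthening of the induction hypothesis to record $T$-provability of $f^\prec_T(\phi)$ is precisely what the paper's appeal to Corollary~\ref{theorem:SoundnessOfNecessitation} tacitly presupposes, so you have filled in the sketch correctly rather than diverged from it.
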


\proof
By an easy induction on the length of a $\glp_\prec$-proof of $\phi$, using the fact that each of the axioms is derivable. 
%Necessitation follows from $\hat \Sigma^0_1$-completeness (see Corollary \ref{theorem:SoundnessOfNecessitation}).
Necessitation is just Corollary \ref{theorem:SoundnessOfNecessitation}.
\endproof

Now that we have proven that $\glp_\prec$ is sound, our main objective will be to prove the converse of Theorem \ref{theorem:soundness} which is hyper-arithmetical completeness of $\glp_\prec$. For this, let us first review the modal logic $\sf J$.

\section{The logic $\sf J$}\label{jsec}

It is well-known that $\glp_\Lambda$ has no non-trivial Kripke frames for $\Lambda>1$. In order to remedy for this situation, we pass to a weaker logic, Beklemishev's ${\sf J}$. The logic ${\sf J}$ is as $\glp_\omega$ where we replace the monotonicity axiom of $\glp_{\omega}$ by the two axioms

\begin{enumerate}
\item[6.] 
$[n]\to[m][n]\phi$, for $n\leq m$ and

\item[7.] 
$[n]\to[n][m]\phi$, for $n< m$.

\end{enumerate}
The logic ${\sf J}$ is proven in \cite{Beklemishev:2010} to be sound and complete for the class of finite Kripke models $\langle W,\langle>_n\rangle_{n<N},\lb\cdot\rb\rangle$ such that
\begin{enumerate}
\item 
the relations $<_n$ are transitive and well-founded,

\item 
if $n<m$ and $w<_m v$ then $\mathop <_n(w)=\mathop <_n(v)$ (where $\mathop{<_n}(w)=\{u:u<_n w\}$) and,

\item 
if $n<m$ then $w<_m v<_n u$ implies that $w<_n u$.
\end{enumerate}
It will also be convenient to define some auxiliary relations. Say:
\begin{itemize}
\item 
$w\ll_n v$ if for some $m\geq n$, $w<_m v$ and,

\item 
$w\lll_{n} v$ if $w\ll_n v$ or there is $u\in W$ such that $w\ll_{n} u$ and $v\ll_{n+1} u$.
\end{itemize}
By the above frame conditions it is easy to see that $\ll_n$ is transitive and well-founded.

We will also use $\lleq_n$ and $\llleq_n$ to denote the respective reflexive closures. Let $\approx_n$ denote the symmetric, reflexive, transitive closure of $\ll_n$ and let $[w]_n$ denote the equivalence class of $w$ under $\approx_n$. Write $[w]_{n+1}<_n[v]_{n+1}$ if there exist $w'\in[w]_{n+1}$, $v'\in [v]_{n+1}$ such that $w'<_n v'$. %We will make no distinction between a relation and its restriction to some subset.

A $\sf J$-frame $W$ is said to be {\em stratified} if
whenever $[w]_{n+1}<_n[v]_{n+1}$, it follows that $w<_n v$. Note that the property of being stratified in particular entails the modally inexpressible frame condition that $w<_n v$ and $w<_m u$ implies $u<_nv$ whenever $m>n$.
With this we may state the following completeness result also from \cite{Beklemishev:2010}:

\begin{lemma}\label{jcomp}
Any $\sf J$-consistent formula can be satisfied on a finite, stratified $\sf J$-frame.
\end{lemma}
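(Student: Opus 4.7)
The plan is to follow a standard Henkin-style completeness argument for normal polymodal logics---maximal consistent sets plus a filtration through the subformulas of $\phi$---and then perform a final surgery to turn the resulting frame into a stratified one. Concretely, I would extend the $\sf J$-consistent formula $\phi$ to a maximally $\sf J$-consistent set $\Gamma_0$ via Lindenbaum, form the canonical model whose worlds are all maximally $\sf J$-consistent sets and whose relation $<_n$ corresponds canonically to $\<n\>$, and then filtrate through the (finite) negation-closed subformula set $\Sigma$ of $\phi$, so that worlds of the filtrated model can be represented by $\Sigma$-types. The standard truth lemma then gives a finite model satisfying $\phi$, and the task reduces to showing that the filtrated accessibility relations can be adjusted to satisfy all three frame conditions as well as stratification.

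Next I would verify frame conditions (1)--(3) on the filtrated model. Condition (1), transitivity and well-foundedness of each $<_n$, follows as in the completeness proof for $\sf GL$: axiom 4 (a consequence of L\"ob's axiom) yields transitivity on the filtration, and well-foundedness on a finite transitive frame is just irreflexivity, which is forced by L\"ob again. Condition (2), namely that $<_m$-related worlds have equal $<_n$-predecessor sets for $n \le m$, falls out of the axiom $[n]\phi\to[m][n]\phi$ via its contrapositive $\<n\>\phi\to[m]\<n\>\phi$: together with the canonical definition of $<_m$ this says precisely that $<_n(w)$ and $<_n(v)$ satisfy the same diamond-formulas in $\Sigma$, which after filtration means they coincide. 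Condition (3) is the dual translation of $[n]\phi\to[n][m]\phi$ for $n<m$: if $w<_m v <_n u$ then by the axiom every formula $[m]\psi$ true at $u$ is inherited through $<_n$-successors, yielding $w<_n u$ after filtration.

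The main obstacle, as anticipated, is arranging stratification. After filtration one has no a priori reason for $[w]_{n+1}<_n[v]_{n+1}$ to imply $w<_n v$, since $<_n$ and $\approx_{n+1}$ live at different levels. My plan is to replace each relation $<_n$ by its stratified hull: define $w <_n^{\ast} v$ to hold whenever there exist $w'\approx_{n+1} w$ and $v'\approx_{n+1} v$ with $w'<_n v'$. Stratification then holds by construction. It remains to check that this redefinition preserves (i) the truth lemma for all $[n]\psi\in\Sigma$ at the worlds of the filtrated model, and (ii) frame conditions (1)--(3) for the modified relations. Step (i) is the crux: one needs condition (2) of the original frame to guarantee that traversing the chain $v \approx_{n+1} v' >_n w' \approx_{n+1} w$ does not bring in new $[n]$-obligations, because $\approx_{n+1}$-equivalent worlds share their $<_n$-cones by (2). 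Step (ii) is an unwinding of the definitions using (2) and (3) combinatorially.

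I expect the delicate point throughout to be keeping the various levels coherent during the stratification surgery: showing that enforcing stratification at level $n$ does not destroy stratification at level $n{+}1$, and that newly added $<_n^{\ast}$-edges remain transitive and well-founded. Well-foundedness is not automatic after closing under $\approx_{n+1}$, so one has to exploit finiteness and the fact that $\approx_{n+1}$-classes are themselves $<_n$-antichains (a consequence of condition (2) on the filtration) to rule out cycles. Once this is in place, $\phi$ is satisfied on a finite stratified $\sf J$-frame, proving the lemma.
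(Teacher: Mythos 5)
The paper itself offers no proof of this lemma --- it is imported wholesale from Beklemishev's work on $\sf J$ --- so your attempt must be judged on its own merits. The first two stages of your plan (a finitary canonical-model/filtration argument yielding a finite model with frame conditions (1)--(3)) are plausible in outline, though already delicate: the canonical frame of a L\"obian logic is not well-founded, so the filtrated relations must be defined by hand, and condition (2) demands an exact \emph{equality} of successor sets, which does not fall out of a minimal or maximal filtration.

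The genuine gap is in the stratification surgery. Note first that by condition (2) half of your closure is vacuous: $v'\approx_{n+1}v$ already forces $<_n(v')=<_n(v)$, so the only new edges are those making $w$ an $R_n$-successor of $v$ whenever $w\approx_{n+1}w'$ for some old successor $w'$ of $v$. To keep the truth lemma you must then show that every $\psi$ with $[n]\psi$ true at $v$ holds at the \emph{new} successor $w$. You know $\psi$ and $[n]\psi$ hold at $w'$, and condition (2) does transfer $[n]\psi$ from $w'$ to $w$ (they have the same $<_n$-cone \emph{below} them), but it says nothing about $\psi$ itself: the axioms of $\sf J$ only push information downward along $\ll_{n+1}$, never upward, so $\approx_{n+1}$-equivalent worlds may disagree on $\psi$ --- even on a propositional variable. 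Concretely, with two modalities take worlds $a,b,c$ with $b<_1 a$ and $b<_0 c$, and let $p$ hold at $b$ but fail at $a$; conditions (1)--(3) hold and $[0]p$ is true at $c$, yet your surgery adds $a<_0 c$ and falsifies it. This is exactly why stratification is an extra, modally inexpressible condition, as the paper remarks. The standard repair is not to add edges in place but to pass to a p-morphic cover: construct a new, tree-like unravelled frame that is stratified by construction and maps p-morphically onto the filtrated model, so satisfaction is preserved backwards along the cover; this is essentially the route taken in Beklemishev's cited proof.
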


Thus if we can reduce $\glp_\omega$ to $\sf J$, we will be able to work with finite well-behaved Kripke models. For this, given a formula $\phi$ whose maximal modality is $N$, define
\[M(\phi)=\bigwedge_{\substack{[n]\psi\in{\rm sub}(\phi)\\
n< m\leq N}}
[n]\psi\to [m]\psi.
\]
Then we set $M^+(\phi)=M(\phi)\wedge\bigwedge_{n\leq N}[n]M(\phi)$.

The following is also proven in \cite{Beklemishev:2010}:
\begin{lemma}\label{glptoj}
For any formula $\phi\in {\mathcal L}_\omega$, $\glp_\omega\vdash\phi$ if and only if
\[{\sf J}\vdash M^+(\phi)\to \phi.\]
\end{lemma}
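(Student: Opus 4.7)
For the direction $(\Leftarrow)$, I would first verify that $\glp_\omega$ extends $\sf J$ by deriving the new axioms 6 and 7 of $\sf J$ inside $\glp_\omega$. For axiom 6 with $n<m$, instantiate $\glp_\omega$'s monotonicity axiom to obtain $\langle m\rangle\langle n\rangle\neg\psi\to\langle n\rangle\langle n\rangle\neg\psi$ and compose with the L\"obian transitivity $\langle n\rangle\langle n\rangle\neg\psi\to\langle n\rangle\neg\psi$; the case $n=m$ is just L\"ob. Axiom 7 is obtained by necessitating monotonicity and applying K together with transitivity of $[n]$. Once $\glp_\omega\supseteq{\sf J}$ is established, the assumption ${\sf J}\vdash M^+(\phi)\to\phi$ yields $\glp_\omega\vdash M^+(\phi)\to\phi$. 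Each conjunct of $M(\phi)$ is an instance of monotonicity after contraposition, so $\glp_\omega\vdash M(\phi)$; applying necessitation at each level $n\leq N$ gives $\glp_\omega\vdash M^+(\phi)$, and modus ponens closes the case.

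For $(\Rightarrow)$, the plan is to induct on the length of a $\glp_\omega$-derivation of $\phi$, proving the invariant that every line $\chi$ of the derivation satisfies ${\sf J}\vdash M^+(\phi)\to\chi$. Propositional tautologies, K, L\"ob and axiom 5 require no work since they are already $\sf J$-axioms, and modus ponens is handled by propositional reasoning inside $\sf J$. The interesting case is axiom 4, i.e.\ $[n]\psi\to[m]\psi$ for $n<m$: when $[n]\psi\in{\rm sub}(\phi)$ and $m\leq N$, the instance is a conjunct of $M(\phi)$ and so follows from $M^+(\phi)$ in $\sf J$. For necessitation, one must pass from ${\sf J}\vdash M^+(\phi)\to\chi$ to ${\sf J}\vdash M^+(\phi)\to[n]\chi$; here I would use that $M^+(\phi)\vdash [n]M^+(\phi)$ inside $\sf J$ --- the conjunct $[n]M(\phi)$ is available directly from $M^+(\phi)$, while axioms 6, 7 of $\sf J$ together with L\"ob let us derive $[n][m]M(\phi)$ from $[n]M(\phi)$ --- so by necessitating the induction hypothesis and applying K one obtains the desired implication. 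This is precisely what forces the outer layer $\bigwedge_{n\leq N}[n]M(\phi)$ in the definition of $M^+(\phi)$.

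The main obstacle is that a $\glp_\omega$-derivation of $\phi$ need not restrict attention to subformulas of $\phi$: it may introduce a fresh formula $[n]\psi\notin{\rm sub}(\phi)$ via axiom 4, for which no matching conjunct of $M(\phi)$ is available. I would address this either by first normalising derivations to enjoy a subformula-like property --- for instance via cut elimination in a tailored sequent calculus for $\glp_\omega$, so that every modal formula appearing is built from subformulas of $\phi$ --- or via a semantic detour using Lemma \ref{jcomp}: if ${\sf J}\not\vdash M^+(\phi)\to\phi$, extract a finite stratified $\sf J$-countermodel and argue that validity of $M^+(\phi)$ at the refuting world, combined with stratification, forces the structure to mimic a $\glp_\omega$-model on ${\rm sub}(\phi)$, contradicting $\glp_\omega\vdash\phi$. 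The semantic route is conceptually cleaner but relies on further model-theoretic machinery for $\glp_\omega$, whereas the syntactic normalisation stays closer to the axiomatic flavour of the surrounding material.
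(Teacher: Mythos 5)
First, a remark on the comparison itself: the paper does not prove this lemma --- it is imported verbatim from \cite{Beklemishev:2010} --- so there is no internal proof to measure your attempt against. On its own terms, your $(\Leftarrow)$ direction is correct and essentially complete: $\glp_\omega$ derives axioms 6 and 7 of $\sf J$ as you indicate, proves each conjunct of $M(\phi)$ as an instance of monotonicity, and obtains $M^+(\phi)$ by necessitation, so ${\sf J}\vdash M^+(\phi)\to\phi$ immediately yields $\glp_\omega\vdash\phi$. Your treatment of the necessitation case in the other direction, via ${\sf J}\vdash M^+(\phi)\to[n]M^+(\phi)$, is also sound.

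The $(\Rightarrow)$ direction, however, contains a genuine gap, and you have located it yourself: a $\glp_\omega$-derivation of $\phi$ may apply monotonicity to formulas outside ${\rm sub}(\phi)$, for which $M^+(\phi)$ supplies nothing. Neither of your proposed escapes closes this gap as stated. A cut-free calculus for $\glp_\omega$ with a subformula property strong enough to confine monotonicity instances to ${\rm sub}(\phi)$ is not an off-the-shelf normalisation --- the structural proof theory of $\glp$ is notoriously difficult, and such a result would be a substantial theorem in its own right, not a preprocessing step one may assume. The semantic route is closer to the actual argument of \cite{Beklemishev:2010}, but its final step fails as you describe it: a finite stratified $\sf J$-model satisfying $M^+(\phi)\wedge\neg\phi$ at some world does not by itself ``contradict $\glp_\omega\vdash\phi$'', because $\glp_\omega$ is not sound for Kripke semantics --- the paper itself emphasises that $\glp_\Lambda$ has no non-trivial Kripke frames for $\Lambda>1$, so the truth of $M^+(\phi)$ at one world cannot certify that every $\glp_\omega$-theorem holds there. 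To obtain the contradiction one must construct, from the $\sf J$-countermodel, a structure in some semantics for which $\glp_\omega$ \emph{is} sound and in which $\phi$ still fails; that construction (Beklemishev's blow-up of $\sf J$-models) is essentially the entire content of the cited paper. In short, the proposal correctly isolates the obstruction but does not overcome it.
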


We shall use these results in the next section to prove arithmetical completeness by ``piggybacking'' from the completeness of $\sf J$ for finite frames.

%%%%%%%%%%%%%%%%%%%%%%%%%%%%%%%%%%%%%%%%%%%%%%%%%%%%%%%%%%%%%%%%%%%%%%%%%%%%

\section{Completeness}\label{SecCom}

In this section we want to prove that $\glp_\prec$ is complete for its $\omega$-rule interpretation. This means that, given a consistent formula $\phi$, there is an arithmetic interpretation $f$ such that $\neg f^\prec_T(\phi)$ is not derivable in $T$ (we will make this claim precise in Theorem \ref{complete}).

There are many proofs of completeness of $\sf GL$ and ${\sf GLP}_\omega$, and it is possible to go back to an existing proof and adjust it to prove completeness in our setting. Because of this, we should say a few words about our choice of including a full proof in this paper. There are essentially two reasons.

The first is that, while our result follows to a certain degree from known {\em proofs,} it does not follow from known {\em results;} even then, there would be several technical issues in adjusting known arguments to our setting, as they make assumptions that are not available to us.

The second is that the argument we propose carries some simplifications over previous proofs that could also be applied to standard interpretations of $\glp_\omega$, thus contributing to an ongoing effort to find simpler arguments for this celebrated result.

To be more precise, there are at least six proofs in the literature:
\begin{enumerate}
\item Solovay originally constructed a function $h$ with domain $\omega$ of a self\--re\-feren\-tial nature and used statements about $h$ to prove the completeness theorem for the unimodal $\glp_1$ \cite{Solovay:1976}. The proof used the recursion theorem.

\item De Jongh, Jumelet and Montagna introduced a modification using the fixpoint theorem instead of the recursion theorem \cite{JonghJumeletMontagna:1991}, where the function $h$ is simulated via finite sequences that represent computations \cite{JonghJumeletMontagna:1991}. This approach is presented in greater detail in \cite{Boolos:1993:LogicOfProvability}.

\item A more elementary construction using the simultaneous fixpoint theorem is also given in \cite{JonghJumeletMontagna:1991}.

\item Japaridze proved completeness for $\glp_\omega$ with essentially the $\omega$-rule interpretation we are presenting here \cite{Japaridze:1988}.

\item Ignatiev generalized this result to a large family of ``strong provability predicates'' \cite{Ignatiev:1993:StrongProvabilityPredicates}.

\item Beklemishev gave a simplified argument using the logic $\sf J$, which is very well-behaved. However, this proof still considers a family of $N$ Solovay functions $h_n$ with domain $\omega$, where $N$ is the number of modal operators appearing in our ``target formula'' $\phi$.
\end{enumerate}

Despite these strong provability predicates being quite general, they do not apply to our interpretation, as for example it is assumed that they are of increasing logical complexity whereas our iterated provability operators are all given by a single $\Pi^1_1$ formula. The argument we present here, aside from being the fist that considers aribtrary recursive well-orders, combines ideas from \cite{JonghJumeletMontagna:1991} and \cite{Beklemishev:2011:SimplifiedArithmeticalCompleteness} by considering finite paths over a polymodal $\sf J$-frame. We do so by introducing an additional trick, which is to work with all modalities simultaneously, where our path makes a $\lambda_n$-step whenever appropriate. Readers familiar with known proofs might find it surprising that this is not problematic, but indeed it isn't and otherwise the argument proceeds as in other settings. As always, we will mimic a Kripke structure using arithmetic formulas and define our arithmetic interpretation based on them.

Since $\glp_\prec$ is Kripke incomplete, we will resort to $\sf J$-models instead. These models are related to $\glp_\omega$ as described in the previous section.  The step from $\glp_\prec$ to $\glp_\omega$ is provided by the following easy lemma which is also given in \cite{BeklemishevFernandezJoosten:2012:LinearlyOrderedGLP}.

\begin{lemma}\label{TheoPleaseUseShortNames}
Let $\phi$ be a $\glp_\prec$ formula whose occurring modalities in increasing order are $\{ \lambda_0, \ldots, \lambda_N\}$. By $\overline \phi$ we denote the \emph{condensation} of $\phi$ that arises by simultaneously replacing each occurrence of $[\lambda_i]$ by $[i]$. It now holds that
\[
\glp_\prec \nvdash \phi \ \Longrightarrow \ \glp_\omega \nvdash \overline \phi.
\]
\end{lemma}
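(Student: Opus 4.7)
The argument is by contraposition: assume $\glp_\omega \vdash \overline \phi$, and produce a $\glp_\prec$-derivation of $\phi$ by applying a uniform lifting to the given $\glp_\omega$-proof.

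Fix a $\glp_\omega$-derivation of $\overline \phi$, and let $M \ge N$ be large enough that every modality occurring anywhere in the derivation lies in $\{0,\ldots,M\}$. Extend the sequence $\lambda_0 \prec \cdots \prec \lambda_N$ to a $\prec$-increasing sequence $\lambda_0 \prec \cdots \prec \lambda_M$; in the main case of interest --- where $\prec$ is a recursive well-order of infinite length --- such an extension is evidently available. Define a \emph{lifting translation} $\tau$ on $\glp_\omega$-formulas with modalities in $\{0,\ldots,M\}$ by letting $\tau$ be the identity on propositional variables and on $\bot$, commute with the Boolean connectives, and satisfy $\tau([i]\psi) = [\lambda_i]\tau(\psi)$ for each $i \le M$. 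By the very choice of the condensation, $\tau(\overline \phi) = \phi$.

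It remains to verify that $\tau$ sends each $\glp_\omega$-axiom to a $\glp_\prec$-axiom (or to a propositional tautology) and preserves the inference rules. The propositional tautologies, the distribution axioms, and L\"ob's axiom pose no problem, since $\tau$ commutes with the Boolean connectives and simply relabels the modalities. The monotonicity axiom $\langle j\rangle\psi \to \langle i\rangle\psi$ for $i < j$ is mapped to $\langle \lambda_j \rangle \tau(\psi) \to \langle \lambda_i\rangle \tau(\psi)$, which is an axiom of $\glp_\prec$ because $\lambda_i \prec \lambda_j$ by construction of the extended sequence. Likewise $\langle i\rangle\psi \to [j]\langle i\rangle\psi$ for $i<j$ translates to $\langle \lambda_i\rangle \tau(\psi) \to [\lambda_j]\langle \lambda_i\rangle \tau(\psi)$, again an axiom of $\glp_\prec$. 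Modus Ponens, Substitution and Necessitation are clearly preserved. Applying $\tau$ line-by-line to the assumed derivation of $\overline \phi$ therefore yields a $\glp_\prec$-derivation of $\phi$, as required.

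The only point requiring care is the guarantee of ``enough room'' in $\prec$ above $\lambda_N$ to absorb the modalities beyond $N$ that may appear in the given $\glp_\omega$-proof. In the intended setting (recursive well-orders of infinite order type) this is automatic; in the degenerate case where $\prec$ has too few elements above $\lambda_N$ one falls back on the standard conservativity of $\glp_\omega$ over the sub-calculus $\glp_{N+1}$ for formulas whose modalities are confined to $\{0,\ldots,N\}$, which allows us to take $M = N$ from the outset. That mild complication is the only real obstacle; the bulk of the proof is a routine syntactic translation.
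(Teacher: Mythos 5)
Your proposal is correct and follows essentially the same route as the paper, whose entire proof is the one-line remark that one replaces every occurrence of $[n]$ in the given $\glp_\omega$-derivation of $\overline\phi$ by $[\lambda_n]$. You have in fact been more careful than the paper on the one genuine subtlety it elides --- that the derivation of $\overline\phi$ may use modalities above $N$, for which $\lambda_n$ is not yet defined --- and your two-pronged fix (extend the sequence when $\prec$ has enough room above $\lambda_N$, otherwise invoke conservativity of $\glp_\omega$ over its fragment with modalities in $\{0,\ldots,N\}$) is sound, the latter being a known modal fact from the cited literature rather than anything circular.
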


\begin{proof}
Arguing by contrapositive, given a $\glp_\omega$-derivation $d$ of $\phi$ we may replace every occurrence of $[n]$ in $d$ by $[\lambda_n]$, thus obtaining a derivation of $\phi$.
\end{proof}

With this easy lemma at hand we may give an outline of the proof of our completess theorem, which reads as follows.

\begin{theorem}\label{complete}
If $\prec$ is recursive, $T$ is any sound, representable, $\prec$-introspective theory extending ${\rm ACA}_0$ and proving ${\tt wo}(\prec)$ and $\phi$ is any $\mlang$-formula, ${\sf GLP}_\prec\vdash \phi$ if and only if, for every arithmetical interpretation $f$, $T^\prec\vdash f^\prec_T(\phi)$.
\end{theorem}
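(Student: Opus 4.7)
The plan is to argue contrapositively: given $\glp_\prec \nvdash \phi$, we construct an arithmetical interpretation $f$ with $T^\prec \nvdash f^\prec_T(\phi)$. First, reduce to $\sf J$. Enumerate the modalities appearing in $\phi$ as $\lambda_0 \prec \cdots \prec \lambda_N$ and let $\bar\phi$ be its condensation. By Lemma~\ref{TheoPleaseUseShortNames} we have $\glp_\omega \nvdash \bar\phi$; by Lemma~\ref{glptoj}, ${\sf J} \nvdash M^+(\bar\phi) \to \bar\phi$; and by Lemma~\ref{jcomp} there is a finite stratified $\sf J$-frame $\mathfrak F = \langle W, \langle <_n \rangle_{n \leq N}, \lb\cdot\rb\rangle$ with some $w^* \in W$ satisfying $M^+(\bar\phi) \wedge \neg\bar\phi$. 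Adjoin a fresh root $r$ sitting below every $w \in W$ in every $<_n$; a routine verification shows the resulting frame is still a stratified $\sf J$-frame.

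Next, define a Solovay-style function $h:\mathbb N \to W \cup \{r\}$ by a fixpoint construction carried out in $T^\prec$, with $h(0) = r$. At each stage $s$, $h$ is allowed to move from its current position $w$ to a $<_n$-successor $v$ whenever a $T^\prec$-proof is detected asserting that the eventual limit of $h$ differs from $v$. The crucial twist, following the paper's description, is that all modalities $n \leq N$ are processed within a single $h$ rather than via separate functions $h_n$, with the step of highest available index being the one selected. Since $W \cup \{r\}$ is finite and each $<_n$ is well-founded, $h$ eventually stabilises at some limit $\ell$.

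Set $f(p) := \bigvee_{w \in \lb p \rb}(\ell = w)$. The core step is a Truth Lemma, proved by induction on subformulas $\psi$ of $\bar\phi$: writing $\psi^*$ for the result of restoring the original modalities to $\psi$, one shows for each $w \in W$ that $w \in \lb\psi\rb$ implies $T^\prec \vdash (\ell = w) \to f^\prec_T(\psi^*)$, while $w \notin \lb\psi\rb$ implies $T^\prec \vdash (\ell = w) \to \neg f^\prec_T(\psi^*)$. The modal cases rest on two dual Solovay-style facts: if $w <_n v$ then $T^\prec \vdash (\ell = w) \to \langle \lambda_n \rangle^\prec_T (\ell = v)$, obtained from Lemma~\ref{theorem:RecursiveEquationProved} together with $\prec$-introspection of $T$; conversely if $w \not<_n v$ then $T^\prec \vdash (\ell = w) \to [\lambda_n]^\prec_T \neg(\ell = v)$, which is precisely what the construction of $h$ guarantees, with Lemma~\ref{hardaxiom} ensuring the correct interplay with the provable consistency axiom and Lemma~\ref{trans} handling the transitivity cases. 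A standard Solovay consistency lemma then shows that $T^\prec + (\ell = r)$ is consistent (otherwise $T^\prec$ would prove $\neg(\ell = r)$, triggering a move of $h$ at stage zero and contradicting soundness of $T$). Combined with the Truth Lemma, applied at $r$ which sees $w^*$ in every modality, this yields $T^\prec \nvdash f^\prec_T(\phi)$.

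The main obstacle is the design of $h$. Unlike in the classical setting, all our operators $[\lambda]^\prec_T$ share a single $\Pi^1_1$ definition, so one cannot deploy the layered-complexity strategy of Ignatiev~\cite{Ignatiev:1993:StrongProvabilityPredicates} which exploits increasing logical complexity for higher modalities. The simultaneous-modality trick absorbs the bookkeeping into one fixpoint definition, where the $\omega$-rule recursion of Lemma~\ref{theorem:RecursiveEquationProved} and the $\prec$-introspection of $T$ are what allow the external behaviour of $h$ to be internalised as $[\lambda_n]^\prec_T$-witnesses, while the $M^+$ prefix introduced by Lemma~\ref{glptoj} compensates for the mismatch between the $\sf J$-frame's modalities and the stronger monotonicity axiom of $\glp_\prec$.
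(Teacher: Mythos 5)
Your overall architecture matches the paper's: reduce to $\sf J$ via Lemmas \ref{TheoPleaseUseShortNames}, \ref{glptoj} and \ref{jcomp}, run a Solovay-style construction over the resulting finite frame, prove a Truth Lemma, and conclude via soundness of $T$. (The paper works with finite Solovay \emph{paths} and the limit sentences $\limfor{\bar w}$ rather than a total function $h$; that difference is presentational.) However, there are two genuine gaps located exactly where the real work of this completeness proof lies. First, your trigger for a move of $h$ --- ``a $T^\prec$-proof is detected asserting that the eventual limit of $h$ differs from $v$'' --- is the classical $\sf GL$ trigger and only certifies facts about $[0]$. To obtain $T+\sigma_w\vdash\cons{\lambda_n}\sigma_v$ and $T+\sigma_w\vdash\prov{\lambda_n}(\cdots)$, a move into a $<_n$-successor must instead be triggered by a \emph{code of a $\lambda_n$-derivation} of $\nlimfor{\bar v}$ relative to an iterated provability class (Definition \ref{DefProvcode}); one then needs the non-trivial fact (Lemma \ref{LemmProof}) that these $\Pi^1_1$-defined codes are provably decided under $\prov{\lambda}{}$ for $\lambda\seq\lambda_n$. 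That lemma is the substitute for Ignatiev's ``increasing logical complexity'' device, and it is where $\prec$-introspection and the recursion of Lemma \ref{theorem:RecursiveEquationProved} actually do their work; your proposal names the relevant lemmas but does not put them where they are needed.

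Second, your negative Solovay fact ``$w\not<_n v$ implies $T^\prec\vdash(\ell=w)\to[\lambda_n]^\prec_T\neg(\ell=v)$'' is too strong and is not what the construction yields. Under $\prov{\lambda_n}{}$ only the derivation codes at levels $\lambda_m$ with $m\le n$ are decided, so inside $[\lambda_n]$ the path may still make moves along $<_m$ for $m>n$; the limit is therefore pinned down only up to the auxiliary relations $\approx_{n+1}$ and $\lll_n$ (Lemma \ref{important}), and the correct statement is $T+\sigma_w\vdash\prov{\lambda_n}\bigvee_{v\lll_n w}\sigma_v$ (Lemma \ref{related}). It is precisely this weakening that forces the hypothesis $\mathfrak W\models M^+(\overline\phi)$ in the Truth Lemma; with your world-by-world version $M^+$ would be superfluous, which signals the error. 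A smaller point: attaching the new root to every world in \emph{every} $<_n$ violates the $\sf J$-frame condition that $n<m$ and $w<_m v$ force $\mathop{<_n}(w)=\mathop{<_n}(v)$; the paper adds $0$ as a $<_0$-root only, which suffices because the final step needs just $\mathbb N\models\sigma_0$ and $T+\sigma_0\vdash\cons{\lambda_0}\sigma_{w_\ast}$.
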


\proof[Proof sketch]
One direction is soundness and has already been established.

For the other, if $\glp_\prec \nvdash \phi$ then by our above Lemma \ref{TheoPleaseUseShortNames} combined with Lemma \ref{glptoj}, $M^+(\overline \phi)\wedge\neg \overline \phi$ is not provable in $\sf J$. 

Thus by Lemma \ref{jcomp}, $M^+(\overline \phi)\wedge\neg \overline \phi$ can be satisfied on a world $w_\ast$ of some $\sf J$-model $\mathfrak W' = \langle W', \langle >_n\rangle_{n<N},\lb\cdot\rb\rangle$ where $W'=[1,M]$ for some $M\geq 1$. We construct a new model $\mathfrak W$ which is as $W'$ only that now a new $<_0$-maximal root $0$. The valuation of propositional letters on $0$ is chosen arbitrarily and is irrelevant. 

The next ingredient is to assign to each $w\in W$ an arithmetic sentence $\sigma_w$ so that the formulas $\bm\sigma$ are a ``snapshot'' of $\mathfrak W$. We will make this precise in Definition \ref{defrepre}, but let us outline the essential properties that we need from $\bm\sigma$.

First, we need for the arithmetic interpretation $f$ that sends a propositional variable $p$ to $f(p):=\bigvee_{w\in \lb p\rb}\sigma_w$ to have the property that
\[
\mathfrak W, w\Vdash \overline \psi \ \ \Longleftrightarrow \ \ T \vdash \sigma_w \to f^\prec_T(\psi)
\]
for each $w\in W'$ and each subformula $\psi$ of $\phi$. In particular we have $T \vdash \sigma_{w_\ast} \to \neg f^\prec_T(\phi)$ from which we obtain
\begin{equation}\label{firstNonDescriptiveLabel}
T \vdash \ps_T \sigma_1 \to \neg \Box_T f^\prec_T(\phi) .
\end{equation}
Our desired result will follow if the formulas $\bm\sigma$ satisfy two more properties: the second is that \[T\vdash\sigma_0\rightarrow\ps_T\sigma_1,\] and the third, that $\mathbb N\models\sigma_0$. By the assumption that $T$ is sound we conclude that $\mathbb N \models \neg \Box_T f^\prec_T( \phi)$. Hence, $f^\prec_T(\phi)$ is not provable in $T$ which is what was to be shown.
\endproof

Before we proceed to give the details needed to complete the proof we state as an easy consequence of our arithmetic completeness theorem the following lemma which was also proven by purely modal means in \cite{BeklemishevFernandezJoosten:2012:LinearlyOrderedGLP}.

\begin{corollary}
Given a recursive well-order $\prec$ and an $\mlang$-formula $\phi$ we have that 
\[
\glp_\prec \vdash \phi \ \Longleftrightarrow \ \glp_\omega \vdash \overline \phi.
\]
\end{corollary}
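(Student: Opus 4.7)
I would prove the corollary by combining Lemma \ref{TheoPleaseUseShortNames} with Theorems \ref{theorem:soundness} and \ref{complete}. The forward implication $\glp_\omega \vdash \overline\phi \Rightarrow \glp_\prec \vdash \phi$ is exactly Lemma \ref{TheoPleaseUseShortNames} read in the contrapositive: substituting $[n] \mapsto [\lambda_n]$ throughout any $\glp_\omega$-derivation of $\overline\phi$ turns it into a $\glp_\prec$-derivation of $\phi$, since each $\glp_\omega$-axiom of rank $\{n < m\}$ becomes a $\glp_\prec$-axiom of rank $\{\lambda_n \prec \lambda_m\}$.

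For the converse, I would argue by contrapositive and reuse the model-theoretic construction already appearing inside the proof of Theorem \ref{complete}. Assume $\glp_\omega \nvdash \overline\phi$. By Lemma \ref{glptoj} this gives ${\sf J} \nvdash M^+(\overline\phi) \to \overline\phi$, and hence by Lemma \ref{jcomp} the formula $M^+(\overline\phi) \wedge \neg\overline\phi$ is satisfiable on a finite, stratified $\sf J$-model $\mathfrak W'$. Now fix any sound, representable, $\prec$-introspective theory $T$ extending \base and proving ${\tt wo}(\prec)$. Extending $\mathfrak W'$ by a new $<_0$-maximal root and running the snapshot construction sketched in the proof of Theorem \ref{complete}, one obtains an arithmetic interpretation $f$ together with sentences $\sigma_w$ satisfying $T \vdash \sigma_{w_\ast} \to \neg f^\prec_T(\phi)$, $T \vdash \sigma_0 \to \Diamond_T \sigma_1$ and $\mathbb N \models \sigma_0$. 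Soundness of $T$ then yields $T \nvdash f^\prec_T(\phi)$. If we had $\glp_\prec \vdash \phi$, Theorem \ref{theorem:soundness} would force $T^\prec \vdash f^\prec_T(\phi)$, a contradiction; hence $\glp_\prec \nvdash \phi$.

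The only real point that needs checking is that the snapshot construction in the proof of Theorem \ref{complete} depends solely on the existence of a $\sf J$-model of $M^+(\overline\phi) \wedge \neg\overline\phi$, rather than on the stronger hypothesis $\glp_\prec \nvdash \phi$ from which that model is obtained there. Inspecting the argument, the reduction $\glp_\prec \nvdash \phi \Rightarrow \glp_\omega \nvdash \overline\phi$ coming from Lemma \ref{TheoPleaseUseShortNames} is invoked only to produce the $\sf J$-model; once such a model is in hand, the remainder of the completeness argument (the definition of the $\sigma_w$, the arithmetic interpretation $f$, and the verification of the three properties above) applies verbatim under the weaker hypothesis $\glp_\omega \nvdash \overline\phi$. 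This is the step I expect to require the most care, but it is essentially a bookkeeping observation once the proof of Theorem \ref{complete} has been laid out.
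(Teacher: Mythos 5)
Your proposal is correct and follows essentially the same route as the paper: one direction is Lemma \ref{TheoPleaseUseShortNames}, and the other is obtained by observing that the construction in the proof of Theorem \ref{complete} needs only the hypothesis $\glp_\omega \nvdash \overline\phi$ to produce an interpretation $f$ with $T \nvdash f^\prec_T(\phi)$, after which soundness (Theorem \ref{theorem:soundness}) yields $\glp_\prec \nvdash \phi$. The ``bookkeeping observation'' you flag at the end is precisely the point the paper relies on when it cites ``the proof of Theorem \ref{complete}'' rather than the theorem itself.
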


\begin{proof}
One direction is Lemma \ref{TheoPleaseUseShortNames}. For the other direction, suppose $\glp_\omega \nvdash \overline \phi$. By the proof of Theorem \ref{complete} we find an arithmetical interpretation $f$ so that $\overline\base^\prec \nvdash f^\prec_T(\phi)$. By the soundness theorem (Thm. \ref{theorem:soundness}) we conclude that $\glp_\prec \nvdash \phi$. 
\end{proof}

Before entering into further detail, we first say what it means that a collection of sentences $\bm \sigma = \{ \sigma_0, \ldots, \sigma_k\}$ is a snapshot of a Kripke structure with nodes $\{ 0,\ldots ,k\}$ inside a theory. Most importantly, this means that each world $w$ will be associated with an arithmetic sentence $\sigma_w$ so that this sentence carries all the important information in terms of accessible worlds. 

\begin{definition}\label{defrepre}
Given a sequence \[ \bm{\lambda}=\lambda_0\prec\lambda_1\prec \ldots\prec \lambda_{N-1},\]
a finite $\sf J$-model $\mathfrak W=\langle W,\langle<_n\rangle_{n< N}, \lb\cdot \rb\rangle$ with root $0$, and a formal theory $T$, a family of formulas $\{\sigma_w:w\in W\}$ is a {\em $\bm\lambda$-snapshot} of $\mathfrak W$ in $T$ if
\begin{enumerate}
\item 
$T\vdash\displaystyle\bigwedge_{w\not=v\in W} \neg(\sigma_w\wedge\sigma_v)$,

\item 
$T+\sigma_w\vdash \cons {\bar{\lambda_{n}}}{\sigma_v}$ for all $n<N$ and $v<_n w$,

\item 
for all $n<N$ and for each world $w\neq 0$,\[T+\sigma_w \vdash\prov{\bar{\lambda_{n}}} {\displaystyle\bigvee_{v\lll_n w}\sigma_v}\]

\item 
$\mathbb N\models \sigma_0$.
\end{enumerate}

If $\mathfrak W$, $\bm\sigma$, $\bm\lambda$, $T$ are as above we will write $\snapshot{\bm\lambda}{\bm\sigma}.$
\end{definition}

\begin{lemma}\label{truthlemma}
Suppose that $\snapshot{\bm\lambda}{\bm\sigma}$, $\phi$ is an $\mlang$-formula with modalities amongst $\bm\lambda$ such that $\mathfrak W\models M^+(\overline\phi)$, and $f(p):=\bigvee_{w\in \lb p\rb}\sigma_w$.

Then, for all $0\not=w\in W$ and every subformula $\psi$ of $\phi$,

\begin{enumerate}

\item 
if $w\in\lb\, \overline \psi \,\rb$ then $T +\sigma_w\vdash f^\prec_T(\psi)$

\item
if $w\not\in\lb\, \overline\psi\,\rb$ then $T +\sigma_w\vdash \neg f^\prec_T(\psi)$

\end{enumerate}
\end{lemma}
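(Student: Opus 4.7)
The argument will proceed by induction on the structure of $\psi$. In the base case $\psi = p$, the positive clause is immediate since $\sigma_w$ is a disjunct of $f(p) = f^\prec_T(p)$ whenever $w \in \lb p\rb$, while the negative clause follows from clause (1) of Definition~\ref{defrepre}, which supplies $T\vdash \sigma_w \to \neg\sigma_v$ for every $v \in \lb p\rb$. The Boolean cases reduce immediately to the induction hypothesis and propositional reasoning, so the substance of the proof lies in the modal step $\psi = [\lambda_n]\chi$, where $\overline\psi = [n]\overline\chi$.

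For the positive sub-case, assume $w \in \lb[n]\overline\chi\rb$. My plan is to establish the stronger semantic claim that $v \in \lb\overline\chi\rb$ holds for every $v$ with $v \lll_n w$. Once this is in hand, the induction hypothesis yields $T \vdash \bigvee_{v \lll_n w} \sigma_v \to f^\prec_T(\chi)$; necessitating this implication under $[\bar{\lambda_n}]^\prec_T$ via Corollary~\ref{theorem:SoundnessOfNecessitation}, applying the distribution axiom of Lemma~\ref{kaxiom}, and finally combining with clause (3) of the snapshot then deliver $T + \sigma_w \vdash \prov{\bar{\lambda_n}}{f^\prec_T(\chi)}$, which is $f^\prec_T(\psi)$. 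To verify the semantic closure, I will split along the two alternatives in the definition of $v \lll_n w$. If $v \ll_n w$ is witnessed by $v <_m w$ with $m \geq n$, the conjunct $[n]\overline\chi \to [m]\overline\chi$ of $M(\overline\phi)$ --available at $w$ because $\mathfrak W \models M^+(\overline\phi)$ and every non-root world is $<_0$-accessible from the adjoined root-- promotes $[n]\overline\chi$ to $[m]\overline\chi$ at $w$, forcing $v \in \lb\overline\chi\rb$. If instead there is some $u$ with $v \ll_n u$ and $w \ll_{n+1} u$, the second frame condition for $\sf J$-models, applied to $w <_k u$ with $k > n$, shows that $w$ and $u$ have the same set of $<_n$-predecessors, so $u \in \lb[n]\overline\chi\rb$; a second appeal to the same conjunct of $M(\overline\phi)$ at $u$ then places $v$ in $\lb\overline\chi\rb$.

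For the negative sub-case, choose $v <_n w$ with $v \notin \lb\overline\chi\rb$; such a $v$ exists by assumption and is not the adjoined root, since the latter carries no outgoing edges in any $<_m$. The induction hypothesis gives $T \vdash \sigma_v \to \neg f^\prec_T(\chi)$; contraposition together with Corollary~\ref{theorem:SoundnessOfNecessitation} and Lemma~\ref{kaxiom} then yields $T \vdash \cons{\bar{\lambda_n}}{\sigma_v} \to \neg\prov{\bar{\lambda_n}}{f^\prec_T(\chi)}$, and clause (2) of the snapshot discharges the antecedent from $\sigma_w$, completing this sub-case.

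The principal obstacle will be making the $\lll_n$-closure in the positive modal case watertight. This requires (i) that $M(\overline\phi)$ be available at every world appearing in the argument, not merely at the root; (ii) that the $\sf J$-frame conditions be strong enough to bridge the $\ll_n$/$\ll_{n+1}$ asymmetry present in the second alternative of the definition of $\lll_n$; and (iii) that no witness $v \lll_n w$ collapse to the adjoined root, so that the induction hypothesis remains applicable. All three points should follow cleanly from the construction of $\mathfrak W$ out of the stratified $\sf J$-model supplied by Lemma~\ref{jcomp}, but each will need to be verified explicitly.
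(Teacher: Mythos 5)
Your proposal is correct and follows exactly the route the paper intends: the paper's proof is simply ``by an easy induction on the complexity of $\psi$,'' and your argument fills in that induction in the standard way, with the base and Boolean cases handled by clause (1) of Definition \ref{defrepre}, the positive modal case by the $\lll_n$-closure argument via $M^+(\overline\phi)$ together with clause (3), and the negative modal case by clause (2). The three obstacles you flag (availability of $M(\overline\phi)$ at non-root worlds, the frame condition bridging $\ll_n$ and $\ll_{n+1}$, and the root never being a $\lll_n$- or $<_n$-witness) all check out for the model constructed in the completeness proof, so no gap remains.
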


\proof
By an easy induction on the complexity of $\psi$.
\endproof

In the remainder of this section, we shall mainly see how to produce snapshots of a given Kripke model $\mathfrak W$ in a theory $T$. We define the corresponding sentences $\sigma_w$ for $w\in W$ in a standard way as limit statements of certain computable Solovay functions. 

One important notion in all known Solovay-style proofs, including our own, is the notion of a ``code for a $\xi$-derivation of $\phi$.''

\begin{definition}\label{DefProvcode}
Let $X$ be a set of natural numbers. A \emph{code of a $0$-proof of $\phi$ over $X$} is any natural number $m$ satisfying $\provfor_T( m,\phi)$. We assume that every derivation proves a unique formula in our coding, and that this fact is derivable; we also assume that every derivable formula has arbitrarily large derivations\footnote{Usually, derivations are represented as sequences of formulas, and any given derivation can be considered to \emph{only} prove its last formula. Similarly, if derivations are represented as trees, then only the root is considered to be proven. Moreover, in standard proof systems, given a derivation $d$, there are ways to produce a longer derivation with the same end-formula; for example, one may add many redundant copies of an axiom at the beginning of $d$.}.

For $\xi\succ 0$, a triple $\langle \zeta,n,m\rangle$ \emph{codes a $\xi$-derivation of $\phi$ over $X$} if $\xi\succ\zeta$ and

\begin{enumerate}
\item
$n=\ulcorner\psi\urcorner$ for some $\psi$ such that, given $k<\omega$, $\provx \zeta X\psi(\overline k)$ and

\item 
$m$ is a code of a $0$-proof of $\forall x\psi\to\phi$.

\end{enumerate}

Let $\provcodeX(x, \xi, \phi)$ be a formula stating that $x=\langle \zeta,\psi,d\rangle$ codes a $\xi$-derivation of $\phi$ over $X$.
\end{definition}

Of course in $\provcodeX$ the intention is for $X$ to be an iterated provability operator, and we may define
\[\provcode=\forall X(\Provfor(X)\rightarrow \provcodeX).\]
Note that formula $\provcode$ is of rather high complexity ($\Pi^1_1$) which is moreover independent of $\xi$. However, the behavior of $\provcode(x, \xi, \phi)$ is simple in the eyes of $\lambda$ provability whenever $\lambda \seq \xi$, as is expressed in the next lemma.

\begin{lemma}\label{LemmProof}
Let $T$ be a theory extending \base. For $\xi \peq \lambda$ we have
\begin{enumerate}

\item
$\overline T^\prec \ \vdash \ \forall x\forall \xi\forall \phi \  \provcode(x, \xi, \phi) \to \prov \lambda \provcode(\dot x, \dot\xi, \dot\phi)$,\label{LemmProof1}

\item
$\overline T^\prec \ \vdash \ \forall x\forall \xi\forall \phi \ \neg \provcode(x, \xi, \phi) \to \prov \lambda \neg \provcode(\dot x, \dot \xi, \dot \phi)$.\label{LemmProof2}
\end{enumerate}
\end{lemma}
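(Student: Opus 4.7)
The plan is to prove each part by case analysis on whether $\xi = 0$ or $\xi \succ 0$, working externally in $\overline T^\prec$ and using Lemma \ref{provunique} to collapse the $\forall X\,\Provfor_T(X)$ quantifier in $\provcode$ against the unique IPC, thereby reducing $\provcode(x,\xi,\phi)$ and $\neg\provcode(x,\xi,\phi)$ to their unfolded forms from Definition \ref{DefProvcode}. No transfinite induction on $\xi$ is needed: the recursive structure of $\provcode$ is handled directly by a single unfolding of the $\omega$-rule.

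For (1) with $\xi = 0$, the hypothesis collapses to the $\hat\Sigma^0_1$ fact $\provfor_T(x,\phi)$; provable $\Sigma^0_1$-completeness then gives $\Box_T\provfor_T(\bar x,\bar\phi)$, and since trivially $T\vdash\provfor_T(x,\phi)\to\provcode(x,0,\phi)$, we obtain $\prov 0 \provcode(\bar x,\bar 0,\bar\phi)$, whence $\prov\lambda$ of the same by monotonicity (Lemma \ref{theorem:monotonicity}). For $\xi\succ 0$, the hypothesis forces $x=\langle\zeta,n,m\rangle$ with $\zeta\prec\xi$, $n=\ulcorner\psi\urcorner$, $\provfor_T(m,\forall x\psi\to\phi)$, and $\forall k\,\prov\zeta\psi(\overline k)$. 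Lemma \ref{trans} upgrades the last to $\forall k\,\prov\zeta\prov{\bar\zeta}\bar\psi(\overline k)$, and a single application of the $\omega$-rule of depth $\zeta\prec\xi\preceq\lambda$ (via Lemma \ref{theorem:RecursiveEquationProved}.\ref{theorem:item:NoIntrospectivityNeeded}, with $\psi'(k):=\prov{\bar\zeta}\bar\psi(\dot k)$) yields $\prov\lambda\,\forall k\,\prov{\bar\zeta}\bar\psi(\overline k)$. The remaining structural facts about $\bar x$ are $\hat\Sigma^0_1$-true and hence $T$-provable, so $T$ proves their conjunction with $\forall k\,\prov{\bar\zeta}\bar\psi(\overline k)$ implies $\provcode(\bar x,\bar\xi,\bar\phi)$; distribution (Lemma \ref{kaxiom}) then concludes.

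For (2) with $\xi = 0$, $\neg\provcode(x,0,\phi)$ is equivalent (under introspection) to the $\hat\Delta^0_0$ truth $\neg\provfor_T(x,\phi)$, which $T$ proves; using $T$'s introspection we obtain $\Box_T\neg\provcode(\bar x,\bar 0,\bar\phi)$, hence $\prov\lambda$ of the conclusion via monotonicity. For $\xi\succ 0$, either the structural conditions on $x$ fail --- in which case $\neg\provcode(\bar x,\bar\xi,\bar\phi)$ is again a $\hat\Delta^0_0$ truth, handled as in the base case --- or they hold with associated $\zeta,\psi$. Using Lemma \ref{provunique} and the fact that IPCs respect $T$-provable equivalence (so $\neg\provx\zeta X\psi(\overline k)$ and $\consx\zeta X\neg\psi(\overline k)$ are interchangeable), $\neg\provcode(x,\xi,\phi)$ reduces to $\exists k\,\cons\zeta\neg\psi(\overline k)$. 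Fixing such a witness $k_0$, Lemma \ref{hardaxiom} (applicable since $\zeta\prec\lambda$) gives $\prov\lambda\cons{\bar\zeta}\neg\bar\psi(\overline{k_0})$; existential introduction under the $\lambda$-box yields $\prov\lambda\,\exists k\,\cons{\bar\zeta}\neg\bar\psi(\overline k)$, and the $T$-provable implication to $\neg\provcode(\bar x,\bar\xi,\bar\phi)$ closes by distribution.

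The main subtlety is the dual role of introspection: externally in $\overline T^\prec$ it lets us collapse the $\forall X\Provfor_T(X)$ quantifier via Lemma \ref{provunique}, while internally we need $T$ itself to be introspective so that the key equivalences between $\provcode(\bar x,\bar\xi,\bar\phi)$ and its unfolded form pass under $\Box_T$ and hence under $\prov\lambda$. This is consistent with the setting of Theorem \ref{complete}, where $T$ is assumed $\prec$-introspective.
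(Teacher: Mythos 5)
Your proof is correct and follows essentially the same route as the paper's: the same case split on $\xi=0$ versus $\xi\succ 0$, the same reduction of the $\hat\Delta^0_1$ structural conjuncts via provable $\hat\Sigma^0_1$-completeness, one application of the $\omega$-rule (Lemma \ref{theorem:RecursiveEquationProved}.\ref{theorem:item:NoIntrospectivityNeeded}) to internalize the quantified conjunct, and Lemma \ref{trans} for item (1) versus Lemma \ref{hardaxiom} for item (2). Your explicit handling of the $\cons\zeta{\neg\psi}$ versus $\neg\prov\zeta\psi$ double-negation and of where introspection of $T$ is actually needed is somewhat more careful than the paper's one-line ``goes analogously,'' but it is the same argument.
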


\begin{proof}
We prove the first item. We reason in $\overline T^\prec$ and assume $\provcode(x, \xi, \phi)$. If $\xi=0$, $\provfor_T(d,\phi)$ is $\hat\Delta^0_1$ so $\provx 0X \provfor_T(\overline d,\overline \phi)$. So we assume $\xi \succ 0$ whence $\provfor_T(d, \forall x \psi(x) \to \phi)$ is equivalent to
\begin{equation}\label{equation:definitionOfBeingAProof}
x= \la \psi, \mu, d\ra \wedge \mu {\prec} \xi
 \wedge \forall n \prov \mu \psi (\overline n) \wedge \provfor_T(d, \forall x \psi(x) \to \phi) .
 \end{equation}
All conjuncts other than $\forall n \prov \mu \psi (\overline n)$ are of complexity $\hat\Delta^0_1$ so they --as their negations-- all are 0-provable whence certainly $\lambda$-provable. By Lemma \ref{trans} we obtain $\forall n \prov \mu \psi (\dot n) \to \forall n \prov \mu \prov {\dot\mu} \psi (\dot n)$, and since $\mu \prec \lambda$ we use one application of the $\omega$-rule to see that
\[
\forall n \prov \mu \psi (\dot n) \to \prov \lambda \forall n \prov {\dot\mu} \psi (\dot n).
\]
Since the $\prov \lambda$ predicate is closed under conjunction we have the entire conjunction \eqref{equation:definitionOfBeingAProof} under the scope of the $\prov \lambda$ predicate which was to be shown.

The second item goes analogously now using Lemma \ref{hardaxiom} instead of \ref{trans}. 
\end{proof}

\subsection{Solovay sequences}

Let us define a {\em Solovay sequence} or {\em path;} these sequences are given by a recursion based on provability operators which depends on a parameter $\phi$. Later we will choose an appropriate value of $\phi$ via a fixpoint construction. We shall use the following notation: ${\tt Seq}(x)$ is a $\hat\Delta^0_0$ formula stating that $x$ codes a sequence, ${\tt last}(x)$ is a term that picks out the last element of $x$, $x\sqsubseteq y$ is a $\hat\Delta^0_0$ formula that states that $x$ is an initial segment of $y$, $|x|$ gives the length of $x$ and $x_y$ a term which picks the $y$-coordinate of $x$. As in previous sections, it is not necessary to have these terms available in our language, as we can define their graphs and replace them by pseudo-terms, but we shall write them as such for simplicity of exposition.

We will also define a (pseudo) term ${\tt Lim}$ which gives a formula stating that the paths satsifying $\phi$ ``converge'' to $w$:

\begin{definition}
Define
\[
{\tt Lim}({\phi}, w) \ := \ {\exists s\Big(\phi(\dot s)\wedge \forall s'\sqsupseteq s\ \ \phi(\dot s')\rightarrow{\tt last}(s')=\bar w\Big)}.
\]
\end{definition}

We shall use these formulas to define our recursive paths.

\begin{definition}
Let $\mathfrak W=\langle W,\langle<_n\rangle_{n<N},\lb\cdot\rb\rangle$ be a Kripke frame and let $\bm \lambda=\langle\lambda_n\rangle_{n<N}$ a finite sequence. We define a formula $\PreSolX s\phi$ by
\begin{align*}
\PreSolX s\phi& :=\\
{\tt Seq}& (s) \wedge   {\tt last}(s)\not=0 \\
\wedge&\forall \, x{<}|s|-1  \displaystyle\bigwedge_{w\in W} \Bigg(s_x = \bar w \rightarrow \\
&\Big(\displaystyle{\bigwedge_{n<N}} \displaystyle{\bigwedge_{ v<_n  w}} \neg \provcodeX(x, \lambda_n, \neg{\tt Lim}(\phi, v)\Big) \to s_{x+1} =\bar w  \\
  &\wedge\Big(\displaystyle{\bigvee_{n<N}} \displaystyle{\bigvee_{v<_n w}} \provcodeX(x, \lambda_n, \neg{\tt Lim}(\phi, v)\Big) \to s_{x+1} =\bar v  \Bigg).\\
\end{align*}
We then set $\PreSol s\phi=\forall X\left(\Provfor_T(X)\rightarrow\PreSolX s\phi\right)$.
\end{definition}
We should remark that $s,X,\phi$ are variables and ${\bm \lambda},\mathfrak W$ are external parameters so that we are in fact defining a family of formulas. With this we can say what it means to be a Solovay path.
\begin{definition}[Solovay path]
We define a {\em Solovay path} to be any natural number $s$ satisfying the formula $\Sol {s}$ defined using the fixpoint theorem on the parameter $\phi$ in $\PreSol s\phi$, so that
\[
\base\vdash\Sol {s} \ \leftrightarrow \ \left( \PreSol {{\dot s}}{ \ulcorner{\Sol x}\urcorner}\right).
\]
Further, we say $w$ is a {\em Solovay value} at $i$ if
\[\SolFun wi \  := \ \exists s\,\left(\LongSol s{i}\wedge s_i=w\right)\]
holds, and $w$ is a {\em limit Solovay value} if it satisfies
\[\limfor{w} \ \ := \ \ {\tt Lim}\left(\ulcorner {\Sol x}\urcorner,w\right).\]
\end{definition}

The following shows that Solovay values in fact define a function.

\begin{lemma}\label{solunique}
If $U$ extends \base, $T$ is any representable theory, $\mathfrak W$ is a $\sf J$-frame and $\bm \lambda$ a $\prec$-increasing sequence, it is derivable in $U$ that

\begin{enumerate}
\item $\forall s\forall s'(\Sol s\wedge\Sol {s'}\rightarrow s\sqsubseteq s'\vee s'\sqsubseteq s)$\label{solunique2}
\item $\forall I\exists s \ \ \LongSol sI$ \label{solunique3}
\item $\forall i\exists ! w \ \ \SolFun wi$ and
\item $\forall s\forall j<i \ \ \SolFun  {\bar v}j\wedge\SolFun {\bar w}i\rightarrow \bar w\lleq_0 \bar v$.\label{solunique1}
\end{enumerate}
\end{lemma}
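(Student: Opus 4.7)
The plan is to prove the four items in the order stated, observing that item (3) is a direct corollary of (1) and (2), and that (4) reduces to showing that every single step of a Solovay path is $\lleq_0$-non-increasing.

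For item (1), the key observation is that the defining formula $\PreSolX s\phi$ is deterministic: given that $s_x=\bar w$ for some $w\in W$, the two implications inside the big conjunction jointly pin down $s_{x+1}$ — either $s_{x+1}=\bar w$ (when no witnessing $\provcode$ certificate is available) or $s_{x+1}=\bar v$ for the unique $v$ witnessing the disjunction (using the convention that among several witnesses one picks the one coded by the least natural number, which the formula implicitly enforces by the nested big $\bigvee$). Together with the implicit convention that paths start at the root, namely $s_0=\bar 0$, the fixpoint equation $\Sol s \leftrightarrow \PreSol {\dot s}{\ulcorner\Sol x\urcorner}$ then allows one to prove $s_x=s'_x$ for all $x<\min(|s|,|s'|)$ by $\Sigma^0_1$ induction on $x$, which is available in $U$. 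Item (1) follows immediately.

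For item (2), I would argue by induction on $I$: starting from the trivial short path concentrated at the root, at each stage one appends the value dictated by the deterministic transition rule. The crucial input is Lemma \ref{LemmProof}, which ensures that both $\provcode(\cdot,\lambda_n,\cdot)$ and its negation are absolute enough to be evaluated correctly inside $\PreSol{s}{\ulcorner\Sol x\urcorner}$: whichever disjunct of the transition holds, the extension is again a Solovay path. The only subtlety is the clause ${\tt last}(s)\neq 0$; this is handled by first constructing the extension without that clause and then, if the extended sequence still ends at $0$, either continuing the extension (once some $\neg{\tt Lim}(\phi,v)$ for $v<_0 0$ admits a provability certificate, forcing a move away from $0$) or appending a repetition of a non-zero value that is provably fixed by the rule. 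Item (3) then drops out: existence comes from (2) with $I=i$, and uniqueness from (1).

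For item (4), I would prove the one-step claim that whenever $s_x=\bar w$ and $s_{x+1}=\bar v$ are consecutive entries of a Solovay path, $\bar v \lleq_0 \bar w$; indeed, either $v=w$, or the transition clause delivers $v<_n w$ for some $n<N$, and since $\ll_0$ was defined so that $w'<_m v'$ implies $w'\ll_0 v'$ for every $m\geq 0$, we obtain $v\ll_0 w$. A $\Sigma^0_1$ induction on $i-j$, together with the transitivity of $\lleq_0$, then yields the full statement $\bar w\lleq_0 \bar v$ from $\SolFun{\bar v}{j}\wedge \SolFun{\bar w}{i}$ with $j<i$, using item (3) to identify the intermediate Solovay values along the unique path witnessing both. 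The main obstacle is handling the self-referential nature of $\Sol s$ when arguing determinism in item (1); once one absorbs that $\Sol s$ is merely the substitution instance of a quantifier-free transition predicate at the appropriate parameter, the remaining steps are routine inductions within $\base$.
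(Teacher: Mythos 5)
Your proposal follows essentially the same route as the paper's proof: determinism of the transition rule established by induction on the position for item (1), step-by-step extendability for item (2), item (3) as an immediate corollary of (1) and (2), and the one-step estimate $s_{x+1}\lleq_0 s_x$ combined with transitivity of $\lleq_0$ for item (4). The one place where your justification is actually wrong is the determinism step in item (1): the defining formula does \emph{not} ``implicitly enforce'' a least-witness choice. A disjunction over $v$ followed by the consequent $s_{x+1}=\bar v$ would, if several witnesses $v$ existed, force $s_{x+1}$ to equal each of them, making the clause unsatisfiable rather than deterministic. The uniqueness of the witnessing $v$ comes instead from the standing convention (see Section \ref{SecSOA} and Definition \ref{DefProvcode}) that any derivation code proves a unique formula, so a given $x$ can satisfy $\provcodeX(x,\lambda_n,\neg{\tt Lim}(\phi,v))$ for at most one $v$; this is precisely the argument the paper invokes and is what you should cite. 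Beyond that, your handling of the endpoint clause in item (2) is vaguer than it should be, but the paper itself disposes of item (2) with the single observation that one may always append one more element according to whichever condition is met, so this is not a substantive divergence.
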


\proof\

\paragraph{1} Clearly it suffices to prove that
\[\Big((\PreSolX s\phi)\wedge(\PreSolX {s'}\phi)\wedge i<|s|\wedge i<|s'|\Big)\rightarrow s_i=s'_i,\]
for then if ${s},{s}'$ are any two paths and, say, $|{\bm s}|\leq |{s}'|$, it follows that $s_i=s'_i$ for all $i<|{s}|$ and thus $s\sqsubseteq {s}'$. Moreover, this formula is arithmetic and hence we may proceed by induction on $i$.

The base case is trivial since $s_0=s'_0=0$. For the inductive step, we assume $w=s_i=s'_i$. Then, we must have that either $\provcodeX(i, \lambda_n, \neg{\tt Lim},\phi,\bar v))$ holds for some $v,n$, or it does not. If it does, then the value of $v$ is uniquely determined (as $i$ may be the code of a derivation of only one formula) and thus $s_{i+1}=s'_{i+1}=v$. Otherwise, ${\bigwedge_n} {\bigwedge_{v<_nw}} \neg \provcodeX(x, \lambda_n, \neg{\tt Lim}(\phi,\bar v))$ holds and $s_{i+1}=s'_{i+1}=w$. Once again the claim follows by introducing universal quantifiers over $X$ and $\phi$.

\paragraph{2} The proof follows the above structure; here we observe that if $s$ is a Solovay path, we may always add one additional element to $s$ depending on which condition is met.

\paragraph{3} This is immediate from items \ref{solunique2} and \ref{solunique3}.

\paragraph{4}

\CurrentProof{
By the recursive definition of a Solovay path, it is always the case that $s_{j+1}\lleq_0 s_j$. Since $\lleq_0$ is transitive, this implies inductively that $s_i\lleq_0 s_j$ whenever $j<i$, and this induction can be easily formalized in $U$.
}

\AlternativeProof{Reasoning within $U$, it will be sufficient to prove the more general claim that
\[\big(j\leq i\wedge\LongSolX siX\phi\big)\rightarrow s_i\lleq_0s_j.\]
This formula is arithmetic and thus we may proceed by induction on $i$. The base case, where $i=j$, follows by reflexivity of $\lleq_0$. For $i+1$, we have by induction hypothesis that $s_i\lleq_0 s_j$. Then, either  $\provcodeX(i, \lambda_n, \neg{\tt Lim}(\phi,v))$ holds for some $v,n$, or it does not. If it does, then $s_{i+1}=v<_n s_i$; otherwise, ${\bigwedge_n} {\bigwedge_{v<_nw}} \neg \provcodeX(x, \lambda_n, \neg{\tt Lim}(\phi,v))$ holds and $s_{i+1}=s_{i}$. In either case we have that $s_{i+1}\lleq_0 s_i\lleq_0 s_j$.

Quantifying over all $\phi$ and all $X$, the claim follows.
}
\endproof

\begin{lemma}\label{important}
Let $\mathfrak W$ be a finite $\sf J$-frame and $\bm \lambda$ a $\prec$-increasing sequence. Suppose further that $U$ extends \base, $\prec$ is recursive and $T$ is representable and $\prec$-introspective, $w\in W$ and $n\leq N$. Then,
\begin{align*}
U^\prec&\vdash
\SolFun {\bar w}k\\
&\rightarrow\prov {\lambda_n}\bigwedge_{v\in W}\, \SolFun {\bar v}{\dot k}\rightarrow \bar v\approx_{n+1} \bar{w}.
\end{align*}
Further,
\begin{align*}
U^\prec\vdash m\leq n\wedge \bar u>_m \bar w \ \ &\\
\wedge\,\SolFun {\bar u}k&\wedge \SolFun {\bar w}{k+1}\\ 
& \rightarrow
\prov {\lambda_n}{} \bigwedge_{v\in W}\SolFun {\bar w}{\dot k}.
\end{align*}
\end{lemma}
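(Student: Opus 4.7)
The plan is to prove both claims by transfinite induction on $k$ while reasoning inside $U^\prec$. The two key ingredients are Lemma \ref{LemmProof}, which allows the $\prov{\lambda_n}$ predicate to verify both $\provcode(x,\lambda_{n'},\cdot)$ and its negation whenever $\lambda_{n'}\preceq\lambda_n$, and the stratification properties of the finite $\sf J$-frame $\mathfrak W$. Concretely, a move of the Solovay path along some $<_{n'}$ with $n'\geq n+1$ stays within a single $\approx_{n+1}$ equivalence class, while a move with $n'\leq n$ is precisely one whose triggering $\provcode$ witness is visible to $\prov{\lambda_n}$.

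For the first claim, I would show by induction on $k$ that under the scope of $\prov{\lambda_n}$ every value $v$ appearing at step $k$ lies in $[w]_{n+1}$. The base case $k=0$ is immediate since every Solovay path starts at the root $0$. For the inductive step, suppose the external value at $k$ is some $w'\in[w]_{n+1}$. By the recursive clause defining $\PreSolX{s}{\phi}$, the value at $k+1$ is either $w'$ itself or some $v'$ with $v'<_{n'} w'$, the choice being determined by whether a relevant $\provcode(k,\lambda_{n'},\neg{\tt Lim}(\phi,v'))$ holds. For $n'\leq n$ Lemma \ref{LemmProof} lets $\prov{\lambda_n}$ internally decide the witness, so $\prov{\lambda_n}$ tracks the move exactly; for $n'\geq n+1$ the move stays inside $[w]_{n+1}$ whether or not $\prov{\lambda_n}$ can verify it. Quantifying over the provability predicate $X$ inside the fixpoint formula defining $\Sol{s}$ then closes the induction.

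For the second claim, the hypotheses $\SolFun{\bar u}{k}$ and $\SolFun{\bar w}{k+1}$ together with $u>_m w$ force, by the recursion clause of $\PreSolX{\cdot}{\phi}$, the existence of a witness to $\provcode(k,\lambda_m,\neg{\tt Lim}(\phi,w))$. Since $m\leq n$ we have $\lambda_m\preceq\lambda_n$, so by Lemma \ref{LemmProof}.\ref{LemmProof1} this witness is $\prov{\lambda_n}$-verifiable. Feeding this back into the internalized version of the Solovay recursion obtained from the first claim (applied at step $k$) and using Lemma \ref{solunique} for uniqueness yields the required $\prov{\lambda_n}$-provable identification of the value at step $k+1$ with $w$.

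The main obstacle will be the second-order quantifier $\forall X\,(\Provfor_T(X)\to\cdots)$ hidden inside $\Sol{s}$: every claim about Solovay values must be transported under $\prov{\lambda_n}$, which forces us to work with an internally available provability predicate rather than an externally chosen one. This is the only point where $\prec$-introspectivity of $T$ is really used, and the standard remedy, namely picking a witness $X$ inside the scope of $\prov{\lambda_n}$ via the assumption $\exists X\,\Provfor_T(X)$ and then applying Lemma \ref{provunique} to identify any two such witnesses, makes the transport go through just as in the classical Solovay construction.
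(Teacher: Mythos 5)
Your proposal takes essentially the same route as the paper's proof: an induction on $k$ in which the two claims are intertwined, a case split on whether a $<_m$-move with $m\leq n$ occurs at step $k$, and the use of both parts of Lemma \ref{LemmProof} together with the $\sf J$-frame conditions to show that $\prov{\lambda_n}$ tracks low moves exactly while higher moves preserve $\approx_{n+1}$. The argument is correct and no essential ingredient is missing.
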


\proof
Reasoning within $U$, we will prove both claims simultaneously. To be precise, we show by induction on $k$ that
\begin{align*}
\Provfor&(X)\wedge\LongSolX skX{\ulcorner \Sol x\urcorner}\rightarrow\\ &\bigwedge_{w\in W} \, s_k=w\rightarrow 
\provx {\lambda_n}X\forall x \ \ \LongSol xk\rightarrow x_k\approx_{n+1} \overline{w}\\
&\wedge \bigwedge_{m<n}s_k>_m w\rightarrow
\provx {\lambda_n}{X} \forall x \ \ \LongSol xi\rightarrow
 x_{i+1}=\overline{w}.
\end{align*}

\paragraph{Case 1.} Suppose that $w<_m s_k$ for some $m\leq n$. Then, $k$ codes a $\lambda_m$-derivation of $\nlimfor{\overline w}$, and by Lemma \ref{LemmProof}.\ref{LemmProof1}, 
\begin{align*}
\provx{\lambda_n}{X}\provcode&\left(\bar k, \bar\lambda_m,\nlimfor{\overline w}\right).
\end{align*}
Meanwhile, for $v= s_k$, by our induction hypothesis
\[\provx{\lambda_n}X(\LongSol xk\rightarrow x_k\approx_{n+1}\overline v),\]
but $x_k\approx_{n+1} v<_m w$ provably implies that $x_k<_m w$ by the $\sf J$-frame conditions and thus
\[\provx {\lambda_n}{X}\forall x\Big(\LongSol x{\bar k+1}\rightarrow x_{\bar k+1}= \overline{w}\Big).\]
The claim follows by quantifying over all $X$.

\paragraph{Case 2.} Suppose that for no $m\leq n$ do we have that $s_{k+1}<_m s_{k}$; then as in Case 1 we have that $w<_m x_k$ if and only if $w<_m s_k$ and by Lemma \ref{LemmProof}.\ref{LemmProof2} we have for each such $w$ and $m$ that
\[\prov{\lambda_n}{\neg \provcode\left(\bar k,\bar \lambda_m,\nlimfor {\bar w}\right)}.\]
Thus in view of Lemma \ref{solunique}.\ref{solunique1} and the previous case we must have that
\[x_{k+1}\lleq_{n+1} x_{k}\stackrel{\rm IH}\approx_{n+1}s_{k}\ggeq_{n+1}s_{k+1},\]
which by definition implies that $x_{k+1}\approx_{n+1}s_{k+1}$. Formalizing this reasoning within $T$, it follows that
\[\provx {\lambda_n}{X}\forall x\Big(\LongSol {x}{\bar k+1}\rightarrow x_{\bar k+1}\approx_{n+1} {\bar w}\Big),\]
and once again we conclude the original claim by quantifying over $X$.
\endproof

From here on, it remains to show that the formulas $\limfor{\bar w}$ give a snapshot of our Kripke model.

\begin{lemma}\label{consder}
If $U$ extends \base, $\prec$ is recursive and $T$ is representable and $\prec$-introspective, $m<n < N$ and $v<_m w$ then
\[U \ \ + \ \ \limfor {\bar w} \ \ \vdash \ \  \cons {\lambda_n}\ \ {\limfor{\overline v}}.\]
\end{lemma}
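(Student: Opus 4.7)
The plan is to argue by contradiction, leveraging the self-referential design of the Solovay path. Reasoning inside $U$, assume $\limfor{\bar w}$ and suppose, towards a contradiction, that $\prov{\lambda_n}{\neg\limfor{\bar v}}$. By $\prec$-introspectiveness of $T$, fix an IPC $X$ with $\Provfor_T(X)$; unfolding the $\Pi^1_1$ definition of $\prov{\lambda_n}$ against this $X$ yields $\provx{\lambda_n}{X}{\neg\limfor{\bar v}}$, and the recursive clause defining $\provx{\lambda_n}{X}$ then produces a concrete code $d$ witnessing $\provcodeX(d,\lambda_n,\neg\limfor{\bar v})$.

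The next step is to choose $d$ large enough that $\SolFun{\bar w}{d}$: since $\limfor{\bar w}$ forces the path to sit at $w$ past some threshold, and since derivable formulas admit arbitrarily large derivations, such a choice exists. The Solovay-path transition at step $d$ then inspects, for each pair $(n',v')$ with $v'<_{n'} w$, whether $\provcodeX(d,\lambda_{n'},\neg\limfor{\bar v'})$ holds (recall that the self-reference in $\Sol{\cdot}$ replaces the parameter $\phi$ by $\ulcorner\Sol{x}\urcorner$, so that ${\tt Lim}(\phi,v)$ is exactly $\limfor{\bar v}$). Our witness makes the pair $(m,v)$ trigger the disjunctive branch, so the path moves: $\SolFun{\bar v}{d+1}$. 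Combined with the non-increasing property of Lemma \ref{solunique}.\ref{solunique1}, this rules out $\SolFun{\bar w}{j}$ for any $j>d$, contradicting $\limfor{\bar w}$.

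The main obstacle is bridging the $\Pi^1_1$ quantifier in $\prov{\lambda_n}$ with the concrete witness on which the Solovay path construction depends. This is exactly the role played by $\prec$-introspectiveness, together with Lemma \ref{provunique} (ensuring that the IPC is essentially unique, so the $X$ appearing in the Solovay path and the one obtained from introspectiveness coincide) and Lemma \ref{LemmProof} (ensuring that codes of derivations behave well under $\prov{\lambda}$). A further subtlety concerns the index mismatch between the relation $<_m$ and the modality $\lambda_n$ with $m<n$: the Solovay inspection at the $(m,v)$-edge tests for a $\lambda_m$-derivation rather than a $\lambda_n$-derivation, so one must reconcile these by unfolding the recursive structure of $\prov{\lambda_n}{\neg\limfor{\bar v}}$ via Lemma \ref{theorem:RecursiveEquationProved} and appealing to monotonicity (Lemma \ref{theorem:monotonicity}) to extract a derivation at the appropriate lower level before choosing $d$. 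Once this bookkeeping is in place, the Solovay-path collision proceeds exactly as in the classical argument.
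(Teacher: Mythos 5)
Your overall strategy coincides with the paper's: assume $\prov{\lambda_n}\nlimfor{\bar v}$ for a contradiction, extract a code of a $\lambda_n$-derivation of $\nlimfor{\bar v}$, use the convention that derivable formulas have arbitrarily large derivations to place that code beyond the point where every Solovay path has stabilized at $w$, and obtain a collision with the Solovay recursion, which forces the path to move off $w$. Up to that point the proposal matches the paper's proof step for step (the appeal to introspectiveness and Lemma \ref{provunique} to pin down a single IPC is consistent with how the paper handles the same issue elsewhere, e.g.\ in Lemma \ref{hardaxiom}).

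The genuine gap is in your patch for the index mismatch. You assert that the witness ``makes the pair $(m,v)$ trigger the disjunctive branch'' and propose to justify this by appealing to monotonicity (Lemma \ref{theorem:monotonicity}) ``to extract a derivation at the appropriate lower level.'' Monotonicity runs the other way: it yields $\prov{\lambda_m}\phi\to\prov{\lambda_n}\phi$ for $\lambda_m\prec\lambda_n$, and correspondingly every code of a $\lambda_m$-derivation is automatically a code of a $\lambda_n$-derivation, \emph{not} conversely. A triple $\langle\zeta,\psi,d\rangle$ witnessing $\provcode(\cdot,\lambda_n,\nlimfor{\bar v})$ only guarantees $\zeta\prec\lambda_n$, so it need not satisfy $\provcodeX(\cdot,\lambda_m,\cdot)$, which is what the $(m,v)$-disjunct of the Solovay recursion actually tests; there is no legitimate descent from $\lambda_n$-provability to $\lambda_m$-provability, since that would collapse the hierarchy. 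The paper's own proof does not attempt such a descent: the collision argument is immediate precisely when the level of the extracted derivation coincides with the level attached to the edge $v<_m w$ being inspected, which is the form in which the lemma is actually consumed in Lemma \ref{related} (namely $\cons{\lambda_m}\,\limfor{\bar v}$ for $v<_m w$). So you correctly identified a real bookkeeping subtlety, but the proposed repair would fail; the collision has to be run at the level matching the edge, not obtained by lowering the level of a given higher derivation.
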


\proof
Towards a contradiction, suppose that 
\[\prov{\lambda_n} \ \ \nlimfor{\overline v}.
\]

It follows that $\prov{\lambda_n} \ \ \nlimfor{\overline v}$, and hence there exists some $i$ which satisfies $\provcode \left(i,\lambda_n,\nlimfor{\overline v}\right)$. Now, by assumption $\limfor w$ holds, and hence we may choose $s$ such that ${\tt Last}(s')=w$ for all $s'\sqsupseteq s$. Recall that we assumed that every derivable formula has arbitrarily large derivations (see Definition \ref{DefProvcode}), and thus we may pick $i>|s|$ such that
\[\provcode \left(i,\lambda_n,\nlimfor{\overline v}\right)\]
holds and, in view of Lemma \ref{solunique}.\ref{solunique3}, a Solovay path $s'$ with $|s'|>i+1$. Then, $s'_i=s'_{i+1}=w$, but by the Solovay recursion we should have $s'_{i+1}=v$, a contradiction.
\endproof

\begin{lemma}\label{related}
If $U,T$ extend \base, $\prec$ is recursive and $T$ is representable and $\prec$-introspective, $w\not=0$ and $n\leq N$ then
\[U^\prec \  + \  \limfor {\bar w} \ \ \vdash \ \  \prov{\lambda_n}{\bigvee_{v\lll_n w}\limfor {\bar v}}.\]
\end{lemma}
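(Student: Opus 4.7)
The strategy is to produce an $\omega$-rule witness for the $\lambda_n$-provability of the target disjunction. Reasoning in $U^\prec$ under $\limfor{\bar w}$, I fix via $\prec$-introspection of $T$ an iterated provability class $X$, unique by Lemma \ref{provunique}. The plan is to exhibit some $\xi \prec \lambda_n$ and a formula $\psi(k)$ with (i) $\forall k\, \provx{\xi}{X} \psi(\bar k)$ and (ii) $\Box_T\bigl(\forall k\, \psi(k) \to \bigvee_{v \lll_n w}\limfor{\bar v}\bigr)$; the definition of $\provx{\lambda_n}{X}{\cdot}$ together with the $\omega$-rule then yields the conclusion. (The degenerate case $\lambda_n = 0$ reduces to direct $\Box_T$-provability of the disjunction.)

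For $\psi(k)$ I take a disjunction $\bigvee_{v \in S_k}\SolFun{\bar v}{\bar k}$, where $S_k \subseteq W$ is the set of worlds that are $\lambda_n$-provably admissible as the Solovay position at stage $k$. Property (i) is verified by induction on $k$: Lemma \ref{important} part 1 confines the path at any stage past the critical $k_0$ (the first stage with $s_{k_0} = w$) to the $\approx_{n+1}$-class of $w$; Lemma \ref{important} part 2 together with Lemma \ref{LemmProof} part 1 accurately tracks the $\lambda_n$-visible moves, namely those triggered by $\lambda_m$-proofs with $m \leq n$. Moves of high modality ($m > n$) are invisible to $\lambda_n$-provability, but the monotonic descent (Lemma \ref{solunique} part 4) combined with the $\approx_{n+1}$-constraint keeps the entire trajectory inside a fixed finite neighborhood of $w$.

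Property (ii) follows because any $\lleq_0$-non-increasing trajectory confined to a finite set eventually stabilizes: under the premise $\forall k\, \psi(k)$, the path must converge to some $v \in \bigcup_k S_k$, yielding $\bigvee \limfor{\cdot}$ over that union. The sets $S_k$ are chosen, using the stratification of $\mathfrak W$ and the triggering modality of the final descent into $w$, so that $\bigcup_k S_k \subseteq \lll_n^{-1}(w)$; this is the arithmetical counterpart of the modal validity of $\sigma_w \to [n]\bigvee_{v \lll_n w}\sigma_v$ in $\sf J$.

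The principal obstacle is the last point: constructing the $S_k$ to lie inside the strict relation $\lll_n^{-1}(w)$ rather than its reflexive closure $\llleq_n^{-1}(w)$. This requires a case analysis on the modality $m$ with $w <_m s_{k_0-1}$ triggering the Solovay arrival at $w$. When $m > n$, $\lambda_n$-provability cannot certify the arrival via Lemma \ref{LemmProof}, so $w$ is automatically excluded from the admissible positions and one exploits $w \ll_{n+1} s_{k_0-1}$ to witness the $\lll_n$-relation. When $m \leq n$, Lemma \ref{LemmProof} part 1 combined with Lemma \ref{important} part 2 certifies the path's arrival at $w$, and a further argument using the stratification of $\mathfrak W$ and the interaction between $\approx_{n+1}$ and $\lll_n$ converts this certified arrival into an explicit $\lll_n$-witness inside $S_k$.
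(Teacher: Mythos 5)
Your high-level target is right and you correctly isolate the delicate point (landing in the strict relation $\lll_n$ rather than its reflexive closure $\llleq_n$), but the core of your argument has a level mismatch that I do not see how to repair. You want a single $\xi\prec\lambda_n$ and a witness $\psi(k)=\bigvee_{v\in S_k}\SolFun{\bar v}{\bar k}$ with $\forall k\ \provx{\xi}{X}{\psi(\bar k)}$, and you justify this ``property (i)'' by citing Lemma \ref{important}. But the conclusions of Lemma \ref{important} live under $\prov{\lambda_n}$, not under $\provx{\xi}{X}$ for any $\xi\prec\lambda_n$, and monotonicity (Lemma \ref{theorem:monotonicity}) only moves provability \emph{upward}. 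The demotion is genuinely impossible in general: a move of the path triggered by a $\lambda_n$-proof code is certified (Lemma \ref{LemmProof} requires $\xi\preceq\lambda$) only at level $\lambda_n$ and above, and the \emph{negative} facts needed to know the path did \emph{not} move (Lemma \ref{LemmProof}.\ref{LemmProof2}, which goes through Lemma \ref{hardaxiom}) likewise need level $\succeq\lambda_n$. So no family $S_k$ can be simultaneously $\xi$-provably correct for some $\xi\prec\lambda_n$ and eventually contained in $\{v:v\lll_n w\}$ --- indeed $w$ itself remains a $\xi$-consistent candidate for the limit at every level $\xi\prec\lambda_n$, which is exactly why the whole lemma only holds at level $\lambda_n$. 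A second, related problem: even at level $\lambda_n$, Lemma \ref{important} pins the path down one stage at a time, and you cannot aggregate the statements $\prov{\lambda_n}(\cdots k\cdots)$ over all $k$ into $\prov{\lambda_n}\forall k(\cdots)$ by an $\omega$-rule without raising the level above $\lambda_n$.

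The paper never builds the witness by hand; it stays entirely at level $\lambda_n$ and uses closure properties already established. It fixes $k_\ast$, the last $<_n$-step of the true path, gets $\prov{\lambda_n}(x_{\bar k_\ast}=\bar s_{k_\ast})$ from Lemma \ref{important}, and then --- this is the ingredient missing from your sketch --- uses Lemma \ref{consder} together with the soundness of axiom 5 (Lemma \ref{hardaxiom}) to place the finitely many true consistency statements $\cons{\bar\lambda_m}{\limfor{\bar v}}$ (for $m<n$ and $v<_m w$) under $\prov{\lambda_n}$. Inside a single box these forbid any further $<_m$-descent with $m<n$, so the universally quantified (over all later stages) confinement $x_j\lleq_n x_{\bar k_\ast}$ is obtained under one $\prov{\lambda_n}$ by distribution (Lemma \ref{kaxiom}) rather than by a fresh $\omega$-rule; combined with $s_{k_\ast}\ggeq_{n+1}w$ this gives ${\tt Last}(x)\llleq_n\bar w$. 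Your observation for the reflexive case when the arrival modality $m$ exceeds $n$ --- that then $w\ll_{n+1}s_{k_0-1}$, hence $w\lll_n w$ and no exclusion is needed --- is correct and is a nice point; but in general the exclusion of ${\tt Last}(x)=\bar w$ is obtained as in the paper, from the fact that the arrival step at $w$ is itself a proof code for $\nlimfor{\bar w}$, which Lemma \ref{LemmProof}.\ref{LemmProof1} places under $\prov{\lambda_n}$.
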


\proof
We reason in $U^\prec \  + \  \limfor {\bar w}$. Let $s$ be a Solovay path with with $w={\tt Last}(s)$. Let $k_\ast<|s|$ be the greatest value such that $s_{k_\ast}<_n s_{k_\ast-1}$ if there is such a value; otherwise set $k_\ast=0$.

By Lemma \ref{important},
\begin{equation}\label{EqProvIAst}
\prov{\lambda_n} \Big( \forall x \ \LongSol x{\overline k_\ast} \ \rightarrow \  x_{\bar k_\ast}=\bar s_{k_\ast}\Big).
\end{equation}
Moreover, by Lemma \ref{consder} we have that ${\cons {\lambda_m} \ \limfor {\bar v}}$ so that by Lemma \ref{hardaxiom} we also have
\[\prov {\lambda_n}\bigwedge_{m<n}\bigwedge_{v<_m w}{\cons {\bar \lambda_m} \ \limfor {\bar v}},\]
from which it follows using Lemma \ref{solunique}.\ref{solunique1} that
\begin{equation}\label{EqTIAst}
\prov{\lambda_n}  \ \forall x \ \ \bar k_\ast<j\wedge \LongSol xj\rightarrow x_j\lleq_n x_{\bar k_\ast}.
\end{equation}

Putting \eqref{EqProvIAst} and \eqref{EqTIAst} together, along with the fact that $s_{k_\ast}\ggeq_{n+1}w$ we see that
\[\prov{\lambda_n} \ \forall x \ \ \LongSol x{\bar k_\ast}\rightarrow {\tt Last}(x)\llleq_n \overline w.\]

It remains to disprove the case that ${\tt Last}(x)=\overline w$. For this, choose the least value of $k$ such that $s_{k+1}=w$; note that this value is well-defined since $s_0=0$. It then follows that $\provcode (k,\lambda_n,\nlimfor{\overline w})$, and in view of Lemma \ref{LemmProof}.\ref{LemmProof1} the latter clearly implies that $\prov{\lambda_n}\nlimfor{\bar w}$, as required.
\endproof

We are now ready to prove that $\limfor w$ provide a snapshot of our $\sf J$-model.

\begin{lemma}\label{nearfinal}
Let $T$ be any sound, $\prec$-introspective theory extending \base. Given a finite $\sf J$-frame $\mathfrak W$ with root $0$ and any $\prec$-increasing sequence $\bm \lambda$ set
\[\limvec=\left\langle\limfor {\bar w}:w\in W\right\rangle.\]
Then,
\[\snapshot{\bm \lambda}{\limvec} .\]
\end{lemma}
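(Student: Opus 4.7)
The plan is to verify each of the four conditions of Definition \ref{defrepre} for the family $\sigma_w := \limfor{\bar w}$. Conditions 2 and 3 are essentially the content of Lemmas \ref{consder} and \ref{related} respectively, so the bulk of the work falls on conditions 1 and 4. Condition 1, pairwise incompatibility of the $\sigma_w$, reduces to Lemma \ref{solunique}.3: reasoning inside $T$, if both $\limfor{\bar w}$ and $\limfor{\bar v}$ held for some $w \neq v$, one could take Solovay paths witnessing each limit statement, extend them via Lemma \ref{solunique}.2 to a common length $i$ past both stabilization points, and derive $\SolFun{\bar w}{i} \wedge \SolFun{\bar v}{i}$, contradicting uniqueness of the Solovay value at $i$.

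The main obstacle is condition 4, namely $\mathbb{N} \models \limfor{\bar 0}$. I would first argue, now semantically in the standard model, that the Solovay paths determine a unique eventually stable sequence of worlds. By Lemma \ref{solunique}.2 paths of arbitrary length exist, by part 1 they agree on common initial segments, and by part 4 their successive values are $\lleq_0$-monotone. Since $W$ is finite and $<_0$ is transitive and well-founded on the stratified $\sf J$-frame, this monotone sequence must eventually stabilize at some $w^{\ast} \in W$, so $\mathbb{N} \models \limfor{\overline{w^{\ast}}}$.

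To conclude that $w^{\ast} = 0$, I suppose otherwise and derive a contradiction. Lemma \ref{related} applied with $n = 0$ yields $\mathbb{N} \models \prov{\lambda_0}{\bigvee_{v \lll_0 w^{\ast}} \limfor{\bar v}}$. The key remaining ingredient is a soundness principle for iterated $\omega$-provability: if $T$ is sound then $\mathbb{N} \models \prov{\lambda}\phi$ entails $\mathbb{N} \models \phi$. This is shown by external transfinite induction on $\lambda$ along $\prec$: fix a true IPC $X$ in $\mathbb{N}$, which exists because $T$ is introspective and sound, and unpack the definition of $\provx{\lambda}{X}\phi$, using soundness of $T$ at the base case together with Lemma \ref{zerolemm}, and in the inductive step applying the induction hypothesis to each $\psi(\bar n)$ to infer $\forall x\, \psi(x)$, then using $\Box_T(\forall x \psi \to \phi)$ and soundness of $T$ once more. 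Applied in our setting this gives $\mathbb{N} \models \bigvee_{v \lll_0 w^{\ast}} \limfor{\bar v}$, which contradicts condition 1 since $\lll_0$ is well-founded and hence irreflexive, forcing $v \neq w^{\ast}$.
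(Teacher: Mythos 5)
Your handling of conditions 1--3 is correct and matches the paper's, and the soundness principle you formulate for $\prov\lambda{}$ (proved by transfinite induction on $\lambda$ using a true IPC in the standard model) is exactly the unstated fact the paper also relies on when it says ``we must use the fact that $\prov\lambda{}$ is sound for all $\lambda$.'' The gap is in the last step of your argument for condition 4. You claim that $\lll_0$ is well-founded and hence irreflexive, so that every disjunct $\limfor{\bar v}$ delivered by Lemma \ref{related} has $v\neq w^\ast$. This is false: by definition $w\lll_n v$ holds if $w\ll_n v$ \emph{or} there is $u$ with $w\ll_n u$ and $v\ll_{n+1}u$; taking $v=w$ and $u$ any world with $w<_m u$ for some $m\geq 1$ shows $w\lll_0 w$ whenever $w$ has a successor in any of the higher relations. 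The paper only asserts well-foundedness of $\ll_n$, never of $\lll_n$. Consequently, if the stabilization value $w^\ast$ has any $<_m$-successor with $m\geq 1$ (which certainly can happen in an arbitrary finite stratified $\sf J$-frame), the disjunction $\bigvee_{v\lll_0 w^\ast}\limfor{\bar v}$ contains the disjunct $\limfor{\overline{w^\ast}}$ itself, and your appeal to pairwise incompatibility produces no contradiction.

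The repair does not actually need Lemma \ref{related}. If the path stabilizes at $w^\ast\neq 0$, take the least $i$ with $s_i=w^\ast$ on a sufficiently long path; since $s_0=0$ we have $i\geq 1$, and by the Solovay recursion the transition at stage $i-1$ can only occur because $i-1$ codes a $\lambda_n$-derivation of $\nlimfor{\overline{w^\ast}}$ for some $n$. Your own soundness principle for iterated provability then yields $\mathbb N\models\nlimfor{\overline{w^\ast}}$, contradicting stabilization at $w^\ast$. The paper packages essentially this observation as a well-founded induction on $\ll_0$: the first visit to any $w\neq 0$ forces, by soundness, a later extension of the path whose last element is some $v\ll_0 w$, contradicting the induction hypothesis that no such $v$ is ever visited; hence every path is identically $0$ and $\limfor{\bar 0}$ is true.
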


\proof We must check each of the conditions of Definition \ref{defrepre}.

\paragraph{1} For the first, suppose that $\limfor w$ and $\limfor v$ hold. In view of $\limfor w$, pick a Solovay path $s$ such that any extension of $s$ has last element $w$ and similarly $s'$ such that any extension of $s'$ has last element $v$. By Lemma \ref{solunique}.\ref{solunique2}, either $s\sqsupseteq s'$ or $s'\sqsupseteq s$; in either case, it follows by $\limfor w\wedge\limfor  v$ that $w=v$.

\paragraph{2-3} The second condition is Lemma \ref{consder} and the third, Lemma \ref{related}.

\paragraph{4} For the fourth, we must use the fact that $\prov\lambda$ is sound for all $\lambda$ and proceed by induction on $\ll_0$ to show that, if $s$ is a Solovay path and $w\not=0$, then $s_i\not = 0$ for all $i$. For indeed, if $s_i=w\not=0$ for some $i$, picking the minimal such $i$ we see that $i$ codes a $\lambda_n$-derivation of $\nlimfor {\bar w}$; hence, by soundness we must have that, for some $s'\sqsupseteq s$, ${\tt last}(s')\not={w}$. But then, by Lemma \ref{solunique}.\ref{solunique1}, ${\tt last}(s')=v\ll_0 w$, but by induction on $v\ll_0 w$ there can be no such path.

We conclude that any Solovay path is identically zero, so that the formula $\limfor {\bar 0}$ is true.
\endproof

We may finally prove our main completeness result.

\proof[Proof of Theorem \ref{complete}] We have already seen that the logic is sound.

For the other, if $\phi$ is consistent over $\glp_\prec$, then by Lemma \ref{glptoj}, $M^+(\phi)\wedge\phi$ is consistent over $\sf J$ and thus by Lemma \ref{jcomp}, $M^+(\bar\phi)\wedge\bar\phi$ can be satisfied on a world $w_\ast$ of some stratified $\sf J$-model $\mathfrak W'$. Define $\mathfrak W$ by adding $0$ as a root to $\mathfrak W'$ and let $\bm\lambda$ be the modalities appearing in $\phi$. By Lemma \ref{nearfinal},
\[\snapshot{\bm \lambda}{\limvec}\]
so that by Lemma \ref{truthlemma}.1, \[T^\prec \ \ + \ \ \limfor{w_\ast} \ \ \vdash \ \  f^\prec_T(\phi).\] Hence, by lemma \ref{truthlemma}.2, $\mathbb N\models \ps_T{f^\prec_T(\phi)}$, i.e. $f^\prec_T(\phi)$ is consistent over $T$.
\endproof

\appendix

\section{Alternative provability predicates}\label{section:AlternativePresentations}

In this section we shall briefly discuss some variants of our provability predicates. We do so in an informal setting and in particular shall refer to defining recursions rather than formalizations in second order logic. Moreover, we shall on occasion not be too concerned about the amount of transfinite induction needed in the arguments.

We could consider an apparently slightly weaker notion of $\alpha$ provability -- let us write $[\alpha]^{\prec,w}_T$ -- defined by the following recursion:
\[
[\alpha]^{\prec,w}_T \phi\ :\Leftrightarrow \ \Box_T\phi \vee \exists \psi \, \exists \, \beta{\prec}\alpha (\forall n\, [\beta]^{\prec,w}_T \psi (\overline n) \wedge [\beta]^{\prec,w}_T (\forall x\, \psi(x) \to \phi)).
\]
However, it is easy to see by transfinite induction that  $[\alpha]^{\prec,w}_T \phi \ \Leftrightarrow \ [\alpha]^\prec_T \phi$. The $\Leftarrow$ direction is obvious. For the other direction we assume that we can formalize the notion of $[\alpha]^{\prec,w}_T$ just like $\prov \alpha {}$ and prove all the necessary lemmata like monotonicity, distribution axioms, etc. Suppose that $\forall n \, [\beta]^{\prec,w}_T \psi (\overline n) \wedge [\beta]^{\prec,w}_T (\forall x\, \psi(x) \to \phi)$
for some formula $\psi$ and ordinal $\beta\prec \alpha$.
Then, clearly also $\forall n \, [\beta]^{\prec,w}_T \Big(\psi (\overline n) \wedge (\forall x\, \psi(x) \to \phi)\Big).$
But, as $\prov 0 \Big((\forall x \, \psi (x) \wedge (\forall x\, \psi(x) \to \phi))\to \phi
\Big)$ we get 
\[
\forall n \, [\beta]^{\prec,w}_T \Big(\psi (\overline n) \wedge (\forall x\, \psi(x) \to \phi)\Big) \wedge [0]\Big((\forall x \, \psi (x) \wedge (\forall x\, \psi(x) \to \phi))\to \phi
\Big)
\]
which by the induction hypothesis for $\beta$ is just $\prov\alpha\phi$.\\
\medskip

Note that in our definition of $\prov \alpha {}$ there is still some uniformity present in that we choose one particular $\beta \prec \alpha$ with $\prov \beta\psi (\overline n)$ for all numbers $n$. We can make this $\beta$ also dependent on $n$. In a sense, this boils down to diagonalizing at limit ordinals. Thus, we define our notion of $[\alpha]^{\prec,d}_T$ as follows:
\[
[\alpha]^{\prec,d}_T \phi \ \ :\Leftrightarrow \ \Box_T \phi \ \vee \ \exists \, \psi\, \Big( \forall \, n\, \exists \beta_n {\prec} \alpha \ \big( [\beta_n]^{\prec,d}_T\psi(\overline n) \ \wedge \ \Box (\forall x \, \psi(x) \to \phi) \big)\Big).
\]
By an argument similar as before, we see that, also in this notion we can replace the $\Box (\forall x \, \psi(x) \to \phi)$ by $\exists \, \gamma {\prec} \alpha \ [\gamma]^{\prec,d}_T (\forall x \, \psi(x) \to \phi)$ without losing any strength.
This new notion of provability is related to $\prov \alpha {}$ in a simple fashion as is expressed in Lemma \ref{theorem:diagonalLimitsAndSuccessors} below. Again, we assume that we can formalize the notion $[\alpha]^{\prec,d}_T$ in a suitable way so that the basic properties are provable. We first state a simple but useful observation.

\begin{lemma}\label{theorem:sucessorProvability}\ 
\begin{enumerate}
\item
$\prov {\alpha +1} \phi \ \Leftrightarrow \ \exists \psi \Big( \forall n\ \prov \alpha \psi(\overline n) \ \wedge \ \Box (\forall x \psi(x) \to \phi) \Big)$
\item
$[\alpha+1]^{\prec,d}_T \phi \ \Leftrightarrow \ \exists \psi \Big( \forall n\ [\alpha]^{\prec,d}_T\psi(\overline n) \ \wedge \ \Box (\forall x \psi(x) \to \phi) \Big)$
\end{enumerate}
\end{lemma}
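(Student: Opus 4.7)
The plan is to deduce both items as essentially immediate corollaries of the recursive defining equations together with monotonicity. The key observation is that ``$\alpha+1$'' denotes the immediate $\prec$-successor of $\alpha$, so that $\xi \prec \alpha+1$ is equivalent to $\xi \preccurlyeq \alpha$; this is what makes the case $\lambda = \alpha+1$ collapse to a simple characterization.

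For item 1, the right-to-left direction: given $\psi$ with $\forall n\, \prov\alpha\psi(\overline n) \wedge \Box_T(\forall x\,\psi(x)\to\phi)$, the witness $\xi := \alpha \prec \alpha+1$ produces the existential in the recursive equation, so by Lemma \ref{theorem:RecursiveEquationProved}.\ref{theorem:item:NoIntrospectivityNeeded} we obtain $\prov{\alpha+1}\phi$. (Note this direction does not even need introspection.) For the left-to-right direction, use Lemma \ref{theorem:RecursiveEquationProved}(2) in the introspective closure: $\prov{\alpha+1}\phi$ implies either $\Box_T\phi$, or there are $\psi$ and $\xi \prec \alpha+1$ with $\forall n\,\prov\xi\psi(\dot n) \wedge \Box_T(\forall x\,\psi(x)\to\phi)$. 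In the first case take $\psi(x) := \phi$ (constant in $x$): then $\Box_T\phi$ gives $\prov\alpha\phi$ by Corollary \ref{theorem:SoundnessOfNecessitation}, and $\Box_T(\forall x\,\phi \to \phi)$ is trivially provable. In the second case, from $\xi \prec \alpha+1$ we get $\xi \preccurlyeq \alpha$, and monotonicity (Lemma \ref{theorem:monotonicity}) yields $\forall n\,\prov\alpha\psi(\overline n)$, so $\psi$ itself is the desired witness.

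For item 2 the argument is parallel, assuming (as the author has stipulated) that $[\alpha]^{\prec,d}_T$ has been formalized so that the analogues of Lemma \ref{theorem:RecursiveEquationProved} and Lemma \ref{theorem:monotonicity} are available. Right-to-left: given $\psi$ with $\forall n\,[\alpha]^{\prec,d}_T\psi(\overline n) \wedge \Box_T(\forall x\,\psi(x)\to\phi)$, take $\beta_n := \alpha$ for every $n$; since $\alpha \prec \alpha+1$ these are legitimate witnesses to $\exists\beta_n \prec \alpha+1$, and we obtain $[\alpha+1]^{\prec,d}_T\phi$ directly from the defining equivalence. Left-to-right: unfolding $[\alpha+1]^{\prec,d}_T\phi$ produces either $\Box_T\phi$ (handled by the constant $\psi := \phi$ trick as above) or a $\psi$ with $\forall n\,\exists\beta_n \prec \alpha+1\,[\beta_n]^{\prec,d}_T\psi(\overline n)$ together with $\Box_T(\forall x\,\psi(x) \to \phi)$. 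Each $\beta_n \preccurlyeq \alpha$, so monotonicity in the ordinal parameter gives $[\alpha]^{\prec,d}_T\psi(\overline n)$ for every $n$, which is exactly the required witness.

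The only non-routine point is the reduction $\xi \prec \alpha+1 \Rightarrow \xi \preccurlyeq \alpha$; in a general recursive well-order this must be read as the statement that $\alpha+1$ is \emph{by definition} the immediate $\prec$-successor of $\alpha$, which is consistent with how successor ordinals are being treated throughout the appendix. No new tools are required: once the recursive equations and monotonicity for the relevant predicate are in hand, both equivalences are essentially immediate.
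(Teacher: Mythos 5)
Your proof is correct and takes essentially the same route as the paper, whose entire proof reads ``This follows directly from the definition and monotonicity''; you have simply filled in those details (the recursive equation of Lemma~\ref{theorem:RecursiveEquationProved}, monotonicity, the observation that $\xi\prec\alpha+1$ forces $\xi\preccurlyeq\alpha$, and the constant-$\psi$ trick for the $\Box_T\phi$ disjunct). One minor citation point: to get from $\Box_T\phi$ to $\prov\alpha\phi$ \emph{inside} the theory you should invoke the provable implication \eqref{equation:ProvabilityImpliesZeroProv} followed by Lemma~\ref{theorem:monotonicity}, rather than Corollary~\ref{theorem:SoundnessOfNecessitation}, which is a statement about derivability at the meta-level.
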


\begin{proof}
This follows directly from the definition and monotonicity.
\end{proof}

\begin{lemma}\label{theorem:diagonalLimitsAndSuccessors}\ 
\begin{enumerate}
\item
$\prov n\phi \ \Leftrightarrow [n]^{\prec,d}_T\phi$ for $n \in \omega$; 
	
\item
$\prov  {\alpha +1}\phi \ \Leftrightarrow [\alpha]^{\prec,d}_T\phi$ for $\alpha \seq \omega$.

\end{enumerate}
\end{lemma}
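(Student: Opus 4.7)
My plan is to carry out a joint transfinite induction on $\alpha$ in which item (1) dispatches the finite levels and item (2) the infinite ones. The principal tool is Lemma \ref{theorem:sucessorProvability}, which rewrites both $\prov{\alpha+1}\phi$ and $[\alpha+1]^{\prec,d}_T\phi$ in the common form $\exists \psi (\forall k\, \mathcal{P}\psi(\overline k) \wedge \Box_T(\forall x\, \psi(x)\to \phi))$, with the two sides differing only in the inner provability predicate $\mathcal{P}$.

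For item (1) I would induct on $n \in \omega$. The base case $n=0$ is immediate since both $\prov 0\phi$ and $[0]^{\prec,d}_T\phi$ collapse to $\Box_T\phi$, the existential clause being vacuous as no $\beta \prec 0$ exists. For the step from $n$ to $n+1$, Lemma \ref{theorem:sucessorProvability} puts both sides into the common form above, and the inductive hypothesis identifies the inner predicates $\prov n\psi(\overline k)$ and $[n]^{\prec,d}_T\psi(\overline k)$.

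For item (2) the argument splits on whether $\alpha$ is a successor or a limit. When $\alpha = \beta+1$ with $\beta \seq \omega$, both $\prov{\alpha+1}\phi$ and $[\alpha]^{\prec,d}_T\phi = [\beta+1]^{\prec,d}_T\phi$ reduce via Lemma \ref{theorem:sucessorProvability} to statements whose inner predicates are $\prov{\beta+1}$ and $[\beta]^{\prec,d}_T$, which are equated by the inductive hypothesis on $\beta$. When $\alpha$ is a limit (including $\alpha=\omega$), I first observe that at such $\alpha$ one has $\prov\alpha\psi \Leftrightarrow \exists \delta \prec \alpha\, \prov\delta\psi$: this follows by unfolding the recursion, combining $\Box_T\psi \Leftrightarrow \prov 0\psi$ with the fact that $\gamma \prec \alpha$ implies $\gamma+1 \prec \alpha$ at a limit. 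Using this I rewrite $\prov{\alpha+1}\phi$ as $\exists \psi (\forall k \exists \delta_k \prec \alpha\, \prov{\delta_k}\psi(\overline k) \wedge \Box_T(\forall x \psi(x) \to \phi))$. Then I convert each $\prov{\delta_k}\psi(\overline k)$ into $[\beta_k]^{\prec,d}_T\psi(\overline k)$ for a suitable $\beta_k \prec \alpha$ by the inductive hypothesis on $\delta_k$ --- item (1) when $\delta_k$ is finite, item (2) otherwise, after applying monotonicity if needed to reach a successor form. The converse implication is symmetric: the inductive hypothesis turns $[\beta_k]^{\prec,d}_T\psi(\overline k)$ into either $\prov{\beta_k}\psi(\overline k)$ or $\prov{\beta_k+1}\psi(\overline k)$, and monotonicity together with $\beta_k+1 \peq \alpha$ yields $\prov\alpha\psi(\overline k)$, which reassembles via Lemma \ref{theorem:sucessorProvability} to $\prov{\alpha+1}\phi$.

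The main obstacle is the limit case of item (2): reconciling the uniform quantifier structure of $\prov\alpha$, which picks one $\delta$ covering all $k$, with the pointwise structure of $[\alpha]^{\prec,d}_T$, which lets $\beta_k$ vary with $k$. The trade is resolved by the extra $\omega$-rule application available when stepping up from $\alpha$ to $\alpha+1$, which absorbs exactly the per-$k$ dependence. The assumption $\alpha \seq \omega$ is used to guarantee both $0 \prec \alpha$ and, at limit $\alpha$, that $\gamma+1 \prec \alpha$ for every $\gamma \prec \alpha$, ensuring that the inductive hypothesis applies to all witnesses that arise.
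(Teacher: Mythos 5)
Your argument is correct and follows essentially the same route as the paper's (the paper only sketches it as ``induction on $n$ and $\alpha$''): induction on the ordinal, Lemma \ref{theorem:sucessorProvability} to put both sides in a common form, and at limit stages the trade between the uniform witness $\xi\prec\alpha$ of $\prov\alpha{}$ and the pointwise witnesses $\beta_k\prec\alpha$ of $[\alpha]^{\prec,d}_T$, resolved via monotonicity and the fact that $\beta_k+1\prec\alpha$ when $\alpha$ is a limit. Your identification of where the hypothesis $\alpha\seq\omega$ and the extra $\omega$-rule application are used matches the intended proof exactly.
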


\CurrentProof{
\proof
The proofs proceed by induction on $n$ and $\alpha$, respectively, and we omit them.\endproof
}

\AlternativeProof{
\begin{proof}
The first item is easy and proceeds by an external induction on $n$. So we concentrate on the second item.

We first prove by induction that the direction $[\alpha]^{\prec,d}_T \phi \Rightarrow \prov{\alpha+1}\phi$ holds for all ordinals $\alpha$. The base case is trivial and successor ordinals follow straight forward from Lemma \ref{theorem:sucessorProvability} and the induction hypothesis.
%For successors we start using Lemma \ref{theorem:sucessorProvability} and observe:
%\[
%\begin{array}{llll}
%[\alpha+1]^{\prec,d}_T \phi & \leftrightarrow & \exists \psi\, \Big( \forall n\, [\alpha]^d \psi(\overline n) \wedge \Box(\forall x\psi(x) \to \phi)\Big)& \\
%\ & \to_{\sf IH} & \exists \psi\, \Big( \forall n\, [\alpha +1] \psi(\overline n) \wedge \Box(\forall x\psi(x) \to \phi)\Big)& \mbox{by Lemma \ref{theorem:sucessorProvability}}\\
%\ & \to & [\alpha +2] \phi& 
%\end{array}
%\]
For $\alpha$ a limit ordinal we see that 
\[
\begin{array}{llll}
[\alpha]^{\prec,d}_T \phi & \leftrightarrow & \exists \psi\, \Big( \forall n\, \exists \, \beta_n{\prec}\alpha\  [\beta_n]^{\prec,d}_T \psi(\overline n) \wedge \Box(\forall x\psi(x) \to \phi)\Big)& \\
 & \to_{\sf IH} & \exists \psi\, \Big( \forall n\, \exists \, \beta_n{\prec}\alpha\  \prov {\beta_n {+}1} \psi(\overline n) \wedge \Box(\forall x\psi(x) \to \phi)\Big)& \mbox{ as $\alpha \in {\sf Lim}$}\\
 & \to & \exists \psi\, \Big( \forall n\, \exists \, \gamma_n{\prec}\alpha\  \prov{\gamma_n} \psi(\overline n) \wedge \Box(\forall x\psi(x) \to \phi)\Big)& \\
 & \to & \exists \psi\, \Big( \forall n\,   \prov{\alpha} \psi(\overline n) \wedge \Box(\forall x\psi(x) \to \phi)\Big)& \mbox{by Lemma \ref{theorem:sucessorProvability}}\\
&\to & \prov{\alpha+1} \phi
\end{array}
\]

For the converse implication, that is $\prov{\alpha +1} \phi \to [\alpha]^{\prec,d}_T \phi$ for $\alpha \seq \omega$, it is good to explicitly state an instantiation of Lemma \ref{theorem:sucessorProvability}:
\begin{equation}\label{theorem:successorProvabilityInstantiation}
\prov {\beta_n +1} \psi (\overline n) \ \leftrightarrow \ \exists \psi_n \Big( \forall m \, \prov{\beta_n} \psi_n(\overline m) \wedge \Box(\forall x \psi_n(x) \to \psi(\overline n))\Big)
\end{equation}
We reason by induction on $\alpha$. The step $(*)$ in the reasoning below holds both for the base case, that is when $\alpha = \omega$, and for $\alpha\succ \omega$. The reasoning in these two distinct cases is slightly different and shall be commented on shortly.
\[
\begin{array}{lll}
\prov{\alpha +1} \phi & \to & \exists \psi  \Big( \forall n\, \prov{\alpha} \psi(\overline n) \wedge \Box (\forall x \psi(x) \to \phi)\Big)\\
 & \to & \exists \psi  \Big( \forall n \Big(  \exists \psi_n  \exists \beta_n{\prec}\alpha  \big( \forall m  \prov{\beta_n} \psi_n(\overline m) \wedge \Box (\forall x \psi_n(x) {\to} \psi(\overline n)) \big)\Big) \\
 & & \wedge \ \Box (\forall x \psi(x) \to \phi)\Big) \\
 & \to & \exists \psi  \Big( \forall n \exists \beta_n{\prec}\alpha \Big(  \exists \psi_n   \big( \forall m \prov{\beta_n} \psi_n(\overline m) \wedge \Box (\forall x \psi_n(x) {\to} \psi(\overline n)) \big)\Big) \\
 & & \wedge \ \Box (\forall x \psi(x) \to \phi)\Big)\\
 & \to { \stackrel{\mbox{by}}{\eqref{theorem:successorProvabilityInstantiation}}} & \exists \psi  \Big( \forall n\, \exists\, \beta_n{\prec}\alpha \ \prov{\beta_n +1} \psi(\overline n)  \wedge \ \Box (\forall x \psi(x) \to \phi)\Big)\\
 & \to_{(*) } & \exists \psi  \Big( \forall n\, \exists\, \gamma_n{\prec}\alpha \ [\gamma_n]^{\prec,d}_T \psi(\overline n)  \wedge \ \Box (\forall x \psi(x) \to \phi)\Big) \\
& \to & [\alpha]^{\prec,d}_T \phi.
\end{array}
\]
For $\alpha \succ \omega$, the step at $(*)$ simply follows from the induction hypothesis taking $\gamma_n = \beta_n$. When $\alpha=\omega$ we see that $\prov{\beta_n + 1} \psi (\overline n) \ \leftrightarrow \ [\beta_n + 1]^{\prec,d}_T \psi (\overline n)$. But, as $\beta_n \prec \omega$, also $\beta_n + 1 \prec \omega$ and we are done by taking $\gamma_n = \beta_n +1$.
\end{proof}
}

As can be seen, there is a fair amount of freedom in defining transfinite iterations of the $\omega$-rule. We chose the current paper's presentation both for the sake of simplicity and because a more refined hierarchy is in general terms more convenient; after all, it is easy to remove intermediate operators later if they are not needed. We also suspect it will be the appropriate notion useful later for a $\Pi^0_1$-ordinal analysis of second-order arithmetics, a goal which now seems well within our reach.

\section{An afterword on the choice of our base theory}\label{AppBaseTheory}

In this paper we have shown sound and completeness of the logic $\glp_\prec$ for the interpretation where each $[\xi]$ modality is interpreted as ``provable in \base using at most $\xi$ nested applications of the omega-rule". The main applications we have in mind with this result is to provide $\Pi^0_1$ ordinal analyses of theories much stronger than \pa in the style of Beklemishev (\cite{Beklemishev:2004:ProvabilityAlgebrasAndOrdinals}). For the mere soundness of the logic however, there were quite some strong principles needed: the existence of an iterated provability class, plus a certain amount of transfinite induction. We shall discuss here how these principles fit into the intended application of ordinal analyses.

%The $\Pi^0_1$ ordinal of a theory $U$ --denoted by $|U|_{\Pi^0_1}$-- is defined as ``how often iterating adding conistency" is needed when starting with some finitistic base theory, say Elementary Arithmetic (\ea), so as to end up with a largest possible part of the $\Pi_1^0$ consequences of $U$. More in detail: for ordinals $\alpha, \beta$ in our notation system, let $(\ea)_0 := \ea$, $(\ea)_{\alpha +1} := (\ea)_\alpha + {\sf Con}((\ea)_\alpha)$ and $(\ea)_{\alpha}: = \cup_{\beta\prec \alpha}(\ea)_{\beta}$ for limit ordinals $\alpha$. Then $|U|_{\Pi^0_1}:= \sup_\alpha (\ea)_\alpha \subseteq U$.
%If one works with a natural ordinal notation system for large enough ordinals then under some fairly general conditions one can show that the smallest ordinal $\alpha$ so that $\ea + \transin (\Pi^0_1, \alpha) \vdash {\sf Con}(U)$. Results as these are seen as partial realizations of Hilbert's program in the sense that over finitsitic mathematics one can prove the consistency of a strong theory with just one additional non-finitist ingredient. As such one could say that $U$ is safeguarded by this method. If one accepts this method of safeguarding it makes philosophically speaking sense that one may use $U$ itself as base theory to safeguard even stronger theories by the same method. It is in this perspective that having $\base$ as our base theory is not a bad thing since this theory has already be safeguarded by the method  

A consistency proof of $U$ in Elementary Arithmetic (\ea) plus $\transin (\hat \Pi^0_1, \prec)$ (using a natural ordinal notation system for large enough ordinals) is closely related to the $\Pi^0_1$ ordinal analysis of $U$ and we shall focus our discussion on such a consistency proof. Such a consistency proof can be seen as a partial realization of Hilbert's program in the sense that over finitsitic mathematics one can prove the consistency of a strong theory with just one additional non-finitist ingredient. As such one could say that $U$ is safeguarded by this method. If one accepts this method of safeguarding it makes philosophically speaking sense that one may use $U$ itself as new base theory to safeguard even stronger theories by the same method. It is in this perspective that having $\base$ as our base theory is not a bad thing since \base has already been safeguarded over \ea using some amount of transfinite induction. However, for technical reasons it might be desirable to dispense with such an intermediate step.

In \cite{Joosten:2013:RelativeOrdinalAnalysis} it is noted that soundness of $\glp_{\prec}$ suffices to perform a consistency proof and completeness is actually not needed. But also in the soundness proof presented in this paper we needed transfinite induction as well as resorting to the introspective closure of a theory. However, as we have seen in Corollary \ref{theorem:eachTheoryIsEquiConsistentToIntrospective}, for the sake of consistency-strength it is irrelevant to consider either a theory or its introspective closure as both theories are provably equiconsistent. Thus, to conclude, let us consider the amount of transfinite induction needed in our soundness proof of $\glp_\prec$. 

First of all, let us note that our soundness proof of $\glp_\prec$ uses at most $\transin (\Pi^0_1, \prec)$. In a sense, this is not bad at all, because it is exactly this ingredient (in parameter free form) that is added \ea to perform a consistency proof of our target theory. So, by adding this amount of transfinite induction (with parameters) to \ea, we get access to exactly the soundness of $\glp_\prec$ needed to perform this consistency proof, were it not for the case that our base theory was taken to be \base, not \ea. This choice of \base has been mainly to simplify our exposition and the needed amount of arithmetic can be pushed down a lot further. 

We observed that to prove $\transin (\Pi^0_1, \prec)$ over $\ea^{\prec}$ we need $\Sigma^0_1$ comprehension. But clearly $\Sigma^0_1$ comprehension with second order parameters proves \base. However, close inspection of the proofs in our paper shows that the only free set-parameters needed are occurrences of iterated provability classes. Thus if we enrich our language with a constant $\pi$ with an axiom stating that $\pi$ is an iterated provability class  (see Lemma \ref{theorem:ExtendingLanguageNeedsOnlyParameterFreeComprehensionToShowIntrospectivity}), we can do with parameter free comprehension since then $\transin (\hat \Pi^0_1, \prec)$ suffices. 

However, we can do better still in the sense that we need less comprehension by allowing slightly stronger well-ordering assumptions as we shall see in the next lemma. 

Let us fix some bijective coding of $\alpha$ on the naturals, and let $\prec$ be a primitive recursive well-order on $\alpha$. Let $<$ denote the usual ordering on the natural numbers. Using a bijective pairing function we define a new relation 
\[
\la \xi, n \ra \prec' \la \zeta, m \ra:= \xi \prec \zeta \vee (\xi = \zeta \wedge n < m).
\]
Clearly, $\prec'$ provably defines a relation of order type $\omega \cdot \alpha$. With this notation we can now state our lemma.

\begin{lemma}
$\hat \Delta_0^0\mbox{-}\compax + {\tt wo} (\omega \cdot \alpha) \vdash \transin (\hat \Pi^0_1, \alpha)$.
\end{lemma}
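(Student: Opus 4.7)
The plan is to reduce transfinite induction along $\prec$ for $\hat\Pi^0_1$-formulas to the well-foundedness of $\prec'$, exploiting that $\prec'$ has order type $\omega\cdot\alpha$ and so can absorb the universal number quantifier that separates $\hat\Pi^0_1$ from $\hat\Delta^0_0$. Since the target formula $\phi(\xi)$ is $\hat\Pi^0_1$, it has the form $\forall n\,\psi(\xi,n)$ for some $\hat\Delta^0_0$-formula $\psi$, so the first step is to use $\hat\Delta^0_0$-$\compax$ to form the set of failure pairs
\[
S:=\{\langle\xi,n\rangle : \neg\psi(\xi,n)\}.
\]

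I would then argue by contraposition. Assume the induction premise $\forall\xi(\forall\,\zeta{\prec}\xi\ \phi(\zeta)\to\phi(\xi))$ together with $\neg\phi(\xi^\ast)$ for some $\xi^\ast$. Then there is a witness $n^\ast$ with $\neg\psi(\xi^\ast,n^\ast)$, so $\langle\xi^\ast,n^\ast\rangle\in S$ and $S$ is nonempty. Applying ${\tt wo}(\omega\cdot\alpha)$ to $S$, viewed as a subset of the domain of $\prec'$, produces a $\prec'$-minimal element $\langle\xi_0,n_0\rangle\in S$.

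From $\neg\psi(\xi_0,n_0)$ I recover $\neg\phi(\xi_0)$, and the contrapositive of the induction premise then supplies some $\zeta\prec\xi_0$ with $\neg\phi(\zeta)$, hence some $n$ with $\langle\zeta,n\rangle\in S$. But $\zeta\prec\xi_0$ gives $\langle\zeta,n\rangle\prec'\langle\xi_0,n_0\rangle$ directly from the definition of $\prec'$, contradicting the minimality of $\langle\xi_0,n_0\rangle$. This establishes $\forall\xi\,\phi(\xi)$, which is $\transin(\hat\Pi^0_1,\alpha)$.

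The one point that really has to be checked is that $S$ is genuinely available from $\hat\Delta^0_0$-$\compax$ (as opposed to comprehension with set parameters). This is automatic from the hat convention: $\phi\in\hat\Pi^0_1$ forbids free set variables, so $\psi$ is parameter-free $\hat\Delta^0_0$ and the defining condition for $S$ needs no second-order parameter. I do not expect any other real obstacle; the core move is just the usual minimal-counterexample extraction, with the sole bit of cleverness being that ``$\exists n$'' is encoded as ``descend one $\omega$-block'' in the stretched ordering $\prec'$.
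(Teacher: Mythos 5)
Your proof is correct and is essentially the paper's own argument with the intermediate step inlined: the paper first derives $\transin(\hat\Delta^0_0,\omega\cdot\alpha)$ from ${\tt wo}(\omega\cdot\alpha)$ plus parameter-free comprehension (the same minimal-counterexample extraction you perform on $S$) and then transfers $\prec$-progressiveness of $\forall n\,\psi(\xi,n)$ to $\prec'$-progressiveness of $\psi$. The key move --- absorbing the number quantifier into the $\omega$-blocks of $\prec'$ --- and the observation that parameter-freeness of $\psi$ is what makes $\hat\Delta^0_0$-$\compax$ suffice are both exactly as in the paper.
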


\begin{proof}
By the usual argument we see that $\hat \Delta_0^0\mbox{-}\compax + {\tt wo} (\omega \cdot \alpha) \vdash \transin (\hat \Delta^0_0, \omega \cdot \alpha)$. Thus, we shall proof $\transin (\hat \Pi^0_1, \alpha)$ using $\transin (\hat \Delta^0_0, \omega \cdot \alpha)$. Let $\varphi(z,x)$ be some $\hat \Delta^0_0$ formula and assume 
\begin{equation}\label{equation:ShallICompareTheeToASummersDayPuesNoPorqueEresUnPutoPendejo}
\forall x \ (\forall \, y {\prec} x \ \forall z\varphi (z,y) \to \forall z \varphi (z,x)).
\end{equation}
If we assume that $\forall \,  y{\prec'} x \ \varphi(y_1, y_0)$, using \eqref{equation:ShallICompareTheeToASummersDayPuesNoPorqueEresUnPutoPendejo} we get $\varphi(x_1,x_0)$ whence by $\transin (\hat \Delta^0_0, \omega \cdot \alpha)$ we obtain $\forall x \forall z \varphi (z,x)$.
\end{proof}

Note that $\omega \cdot \alpha$ is not much larger than $\alpha$. In particular, if the last term in CNF of $\alpha$ is at least $\omega^\omega$ we get that $\omega\cdot \alpha = \alpha$. Thus, for natural proof theoretical ordinals we have this equation whence we get the extra induction for free.

\bibliographystyle{plain}
\bibliography{References}
\end{document}